\begin{document}

\newcommand{\blue}{\color{blue}}
\newcommand{\red}{\color{red}}
\newcommand{\magenta}{\color{magenta}}

\newcommand{\s}{\sigma}
\renewcommand{\k}{\kappa}
\newcommand{\p}{\partial}
\newcommand{\D}{\Delta}
\newcommand{\om}{\omega}
\newcommand{\Om}{\Omega}
\renewcommand{\phi}{\varphi}
\newcommand{\e}{\epsilon}
\renewcommand{\a}{\alpha}
\renewcommand{\b}{\beta}
\newcommand{\N}{{\mathbb N}}
\newcommand{\R}{{\mathbb R}}
   \newcommand{\eps}{\varepsilon}
   \newcommand{\EX}{{\mathbb{E}}}
   \newcommand{\PX}{{\mathbb{P}}}

\newcommand{\cF}{{\cal F}}
\newcommand{\cG}{{\cal G}}
\newcommand{\cD}{{\cal D}}
\newcommand{\cO}{{\cal O}}

\newcommand{\de}{\delta}

\newcommand{\dif}{{\mathord{{\rm d}}}}

\newcommand{\grad}{\nabla}
\newcommand{\n}{\nabla}
\newcommand{\curl}{\nabla \times}
\newcommand{\dive}{\nabla \cdot}

\newcommand{\ddt}{\frac{d}{dt}} % 斜体的
\newcommand{\la}{{\lambda}}

\newtheorem{theorem}{Theorem}[section]
\newtheorem{lemma}{Lemma}[section]
\newtheorem{remark}{Remark}[section]
\newtheorem{example}{Example}[section]
\newtheorem{definition}{Definition}[section]
\newtheorem{corollary}{Corollary}[section]
\newtheorem{assumption}{Assumption}[section]
\newtheorem{prop}{Proposition}[section]
\newtheorem{notation}{Notation}[section]
\def\proof{\mbox {\it Proof.~}}
\makeatletter
\@addtoreset{equation}{section}
\makeatother
\renewcommand{\theequation}{\arabic{section}.\arabic{equation}}

 \makeatletter\def\theequation{\arabic{section}.\arabic{equation}}\makeatother

\title{ Synchronization by degenerate noise }

\author{
{ \bf\large Xianming Liu, Xu Sun}\hspace{2mm}
\vspace{1mm}\vspace{1mm}\\
{\it\small  School of Mathematics and Statistics},\\
{\it\small Huazhong University of Science and Technology},
{\it\small Wuhan 430074,  China}\\
{\it\small E-mail:    xmliu@hust.edu.cn, xsun@hust.edu.cn }\vspace{1mm}}

\date{\today } 
\maketitle

\begin{abstract} 
In this paper, we derive several criteria for (weak) synchronization by noise without the global swift transitivity property. Our sufficient conditions for (weak) synchronization are necessary and can be applied to scenarios involving degenerate or non-Gaussian noise.
These results partially answer the open question posed by Flandoli et al. (Probab Theory Relat Fields 168:511-556, 2017).
As an application, we prove that the weak attractor for stochastic Lorenz 63 systems driven by degenerate noise consists of a single random point provided the noise intensity is small, and there is no weak synchronization if the noise intensity is large. This indicates that a bifurcation occurs in relation to the intensity of the noise.

\end{abstract}

\noindent {\it \footnotesize Key words}. {\scriptsize
%Synchronization; 
Synchronization by noise;  Random dynamical system; Random attractor; Stochastic Lorenz 63 system; Lyapunov
exponent. }

\setcounter{secnumdepth}{5} \setcounter{tocdepth}{5}

% You can put a figure caption inside a table environment
% by command \figcaption.
% You can put a table caption inside a figure environment
% by command \tabcaption.
\makeatletter
    \newcommand\figcaption{\def\@captype{figure}\caption}
    \newcommand\tabcaption{\def\@captype{table}\caption}
\makeatother

%\setlength{\topmargin}{0in} \setlength{\textheight}{8.5in}
%\setlength{\textwidth}{6.5in} \setlength{\oddsidemargin}{0in}
%\setlength{\evensidemargin}{0in}

%\newpage

%%%%%%%%%%%%%%%%%%%%%%%%%%%%%%%%%%%%%%%%%%%%%%%%%%%%%%%%%%%%%%%%%%%%%%%%%%%%%%%%%

\section{\bf Introduction }  \label{Intro}
%%%%%%%%%%%%%%%%%%%%%%%%%%%%%%%%%%%%%%%%%%%%%%%%%%%%%%%%%%%%%%%%%%%%%%%%%%%%%%%%%

In this paper, we introduce sufficient conditions for (weak) synchronization by general white noise, which could be degenerate or non-Gaussian. Here, (weak) synchronization by noise means that there is a random (point) attractor $A$ for random dynamical systems (RDS) $\phi$ consisting of a single random point, i.e., $A(\omega)=a(\omega)$ almost surely. 

It is worth pointing out that if the state space $E$ is local and $\sigma$-compact,  our sufficient conditions for synchronization by noise are necessary simultaneously. 
On the other hand, it is well known that weak synchronization implies that, for each $x,y\in E$,
\begin{equation}\label{eq:1.1}
\lim_{t \to \infty}  d(\phi_t(\omega, x), \phi_t(\omega, y)) = 0 \quad \text{ in probability}.
\end{equation}
Assume that a white noise RDS $\phi$ has right-continuous trajectories and is strongly mixing. We obtain that
$$
\text{Weak synchronization} \Leftrightarrow  % \phi \text{ is strong mixing}  + 
\eqref{eq:1.1}.
$$
Therefore, these theorems can be applied to scenarios involving degenerate noise or non-Gaussian noise. 

As an application, we study stochastic Lorenz 63 systems driven by degenerate noise~\cite{CH21}. The associated RDS synchronizes if the noise intensity is small, and there is no weak synchronization provided the noise intensity is large.  

%%%%%%%%%%%%%%%%%%%%%%%%%%%%%%%%%%%%%%%%%%%%%%%%%%%%%%%%%%%%%%%%%%%%%%%%%%%%%%%%%
\subsection{\bf Literature review }  \label{Literature}
%%%%%%%%%%%%%%%%%%%%%%%%%%%%%%%%%%%%%%%%%%%%%%%%%%%%%%%%%%%%%%%%%%%%%%%%%%%%%%%%%

The terminology associated with synchronization by noise is inconsistently utilized in the literature. This has resulted in the categorization of at least three distinct phenomena under this overarching term.

(1) \textbf{Stabilization by noise.} In some instances, the effect that deterministic invariant points may become stable due to the inclusion of noise has been referred to as stabilization by noise~\cite{CR04}. In some examples, the deterministic and stochastic systems share the same deterministic invariant points. The property that every two trajectories of a noisy system
converge to one another, that is, \eqref{eq:1.1} holds, has been named synchronization by
noise in recent publications (e.g., \cite{Newman}). This property is closely related and a simple consequence of the results for weak synchronization obtained in~\cite{FGS17b} and Corollary~\ref{cor:2.2} in this work.  {\em``Thus, it would be a good reason to refer to the effects observed here as stabilization by noise"}~\cite{FGS17b}. 

(2) \textbf{Coupled synchronization.} The synchronization of coupled dynamical systems is a well-known phenomenon. 
It deals with coupled dynamical systems with a common dynamical feature in an asymptotic sense~\cite{Caraballo06, CK05}. It seems that coupled synchronization predominantly relies on the methods of coupling employed, even in the absence of noise.

(3) \textbf{Synchronization by noise.} In some cases, noise destroys unstable deterministic invariant points~\cite{CF98}.  As a result, a deterministic set attractor transforms into a random attractor that consists of a single random point by noise, see~\cite{Tearne, Caraballo07, FGS17}. Synchronization by noise for order-preserving RDS has been analyzed~\cite{FGS17b}. Synchronization by noise for SPDEs has been investigated~\cite{GT24}.

In this paper, we mainly study the third case, i.e., synchronization by noise.

%%%%%%%%%%%%%%%%%%%%%%%%%%%%%%%%%%%%%%%%%%%%%%%%%%%%%%%%%%%%%%%%%%%%%%%%%%%%%%%%%
\subsection{\bf Motivations and comments }  \label{comments}
%%%%%%%%%%%%%%%%%%%%%%%%%%%%%%%%%%%%%%%%%%%%%%%%%%%%%%%%%%%%%%%%%%%%%%%%%%%%%%%%%
The motivation for this work comes from \cite{FGS17}. 
In \cite{FGS17}, the author showed that locally asymptotically stable, contracting on large sets, and globally swift transitive imply synchronization~\cite[Theorem 2.14]{FGS17}.  Weak synchronization occurs provided a strongly mixing white noise RDS is weakly asymptotically stable, pointwise strongly swift transitive ~\cite[ Theorem 2.23]{FGS17}.  
In the same article, the authors said
{ \em``
Numerical evidence suggests that the top Lyapunov exponent for the Lorenz system perturbed by strong noise 
is negative and (weak) synchronization occurs. The Lorenz system, however, is not covered by the techniques put forward in Sect. 3. We leave this as an open problem. We prove swift transitivity for a large class of SDE with (non-degenerate) additive noise in Sect. 3. It is left as an open problem to establish swift transitivity in other situations, such as degenerate additive or multiplicative noise."}

Now, the main challenges and contributions are summarized below.

(1) It is well known that weak synchronization implies that, for each $x,y\in E$,
\begin{equation}\label{eq:1.1-aga}
\lim_{t \to \infty}  d(\phi_t(\omega, x), \phi_t(\omega, y)) = 0
\end{equation}
in probability. So weak asymptotic stability and the pointwise stability condition are also necessary for weak synchronization, while global pointwise strong swift transitivity is not.
Assume that $\phi$ has right-continuous trajectories and is strongly mixing. We obtain 
$$
\text{Weak synchronization} \Leftrightarrow \lim_{t \to \infty}  d(\phi_t(\omega, x), \phi_t(\omega, y)) = 0 \ \text{ in probability}, \forall x,y\in E.
$$
Even local pointwise strong swift transitivity is not needed, see Theorem~\ref{thm:2.3}.  

(2) Concerning synchronization by noise, we replace contraction on large sets (necessary condition) and global swift transitivity (unnecessary condition) with local strong swift transitivity (necessary condition).
If the state space $E$ is Heine-Borel,  our sufficient conditions for synchronization by noise, with no need to verify contraction on large sets, are necessary simultaneously, see Theorem~\ref{thm:2.1}.  As a result, these theorems can be applied to scenarios involving degenerate noise or non-Gaussian noise.

(3) In \cite{FGS17}, the authors conjecture that the top Lyapunov exponent for the Lorenz system perturbed by strong noise is negative and (weak) synchronization occurs. Also inspired by the work of M. Coti~Zelati and M. Hairer~\cite{CH21}, we study stochastic Lorenz 63 systems driven by degenerate noise. The associated RDS does synchronize if the noise intensity is small, and there is no weak synchronization provided that the noise intensity is large. This means a bifurcation appears with respect to the noise intensity.

%%%%%%%%%%%%%%%%%%%%%%%%%%%%%%%%%%%%%%%%%%%%%%%%%%%%%%%%%%%%%%%%%%%%%%%%%%%%%%%%%
\subsection{\bf Organization of the article}  \label{Org}
%%%%%%%%%%%%%%%%%%%%%%%%%%%%%%%%%%%%%%%%%%%%%%%%%%%%%%%%%%%%%%%%%%%%%%%%%%%%%%%%%

In Section~\ref{pre}, we introduce several definitions and present new criteria for synchronization and weak synchronization. In Section~\ref{Syn SDE}, we establish results regarding local asymptotic stability and local strong swift transitivity for several classes of SDEs. In Section~\ref{Sec:example}, we investigate the synchronization for three specific types of degenerate SDEs, including the stochastic Lorenz 63 system. In Section~\ref{proof}, we provide the proofs of the theorems presented in Section~\ref{pre}. 

%%%%%%%%%%%%%%%%%%%%%%%%%%%%%%%%%%%%%%%%%%%%%%%%%%%%%%

\section{\bf Preliminaries and main results }  \label{pre}

%%%%%%%%%%%%%%%%%%%%%%%%%%%%%%%%%%%%%%%%%%%%%%%%%%%%%%

Let $(E,d)$ be a Polish space. We put the notation and background on random dynamical systems (RDS) in the appendix. We refer the reader to~\cite{Arn98} for further reading.

%%%%%%%%%%%%%%%%%%%%%%%%%%%%%%%%%%%%%%%%%%%%%%%%%%%%%%

\subsection{\bf Synchronization } \label{S21}

%%%%%%%%%%%%%%%%%%%%%%%%%%%%%%%%%%%%%%%%%%%%%%%%%%%%%%
\begin{definition}
    We say that synchronization occurs for a given {\em RDS} $\phi$ if there is a weak attractor $A(\omega)$ being a singleton, for $\PX$-a.e. $\omega \in \Omega$.
\end{definition}

We first recall some definitions and a sufficient condition for synchronization~\cite{FGS17}.
\begin{definition}\label{def:AS} {\em(\cite[Definition 2.2]{FGS17})}
Let $U \subset E$ be a non-empty open set.  We say $\phi$ is asymptotically stable on $U$ if there exists a  time sequence  $t_n  \uparrow \infty$ such that  
$$
\PX \left( \lim_{n\rightarrow \infty }{\rm{diam}}(\phi_{t_n}(\cdot, U)) =0\right)>0,
$$
where $\mathrm{diam}(A):=\sup\limits_{x,y\in A} d(x,y)$.
\end{definition}

\begin{definition} \label{def:CLS} {\em(\cite[Definition 2.11]{FGS17})}
We say $\phi$ is contracting on large sets  if for every $R>0$, there is  a ball $B(y,R)$ a time $t >0$ such that  
$$
\PX \left( {\rm{diam}} \left(\phi_t(\cdot, B(y,R))\right) \leq \frac{R}{4} \right)>0.
$$
\end{definition}

\begin{definition} \label{def:ST} {\em(\cite[Definition 2.7]{FGS17})}
We say $\phi$ is swift transitive if, 
for every $x,y\in E$ and $R>0$, there is a time $t_0>0$ such that  
$$
\PX \left( \phi_{t_0}(\cdot,B(x,R)) \subset B (y, 2 R) \right)>0.
$$
\end{definition}

\begin{prop}\label{prop:2.1} {\em(\cite[Theorem 2.14]{FGS17})}
 Let $A$ be the unique weak attractor of an {\em RDS} $\phi$.  $\phi$ is locally asymptotically stable, contracting on large sets and globally swift transitive. Then synchronization occurs.
\end{prop}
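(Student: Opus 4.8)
\proof (Proof proposal.)
The plan is to show that the weak attractor $A(\omega)$, which is $\PX$-a.s. a nonempty compact random set satisfying the invariance $\phi_t(\omega,A(\omega))=A(\theta_t\omega)$, has zero diameter almost surely. Write $D(\omega):=\mathrm{diam}(A(\omega))$, a $[0,\infty)$-valued random variable; synchronization is exactly $\PX(D=0)=1$. The argument splits into a qualitative $0$-$1$ reduction and a quantitative ``collapse the diameter with positive probability'' estimate assembled from the three hypotheses.

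First I would reduce the claim to a positive-probability statement. The event $\{\,D=0\,\}=\{\omega: A(\omega)\text{ is a singleton}\}$ is forward invariant under the driving flow $\theta_t$: if $A(\omega)=\{a(\omega)\}$ then $A(\theta_t\omega)=\phi_t(\omega,\{a(\omega)\})$ is again a singleton, so $\{D=0\}\subseteq\theta_t^{-1}\{D=0\}$. Since $\PX$ is $\theta$-invariant and (for white noise) the shift is ergodic, this forces $\{D=0\}$ to be invariant up to null sets, hence trivial, i.e. $\PX(D=0)\in\{0,1\}$. Therefore it suffices to prove $\PX(D=0)>0$.

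Second, I would build the collapse with positive probability by chaining the three hypotheses over disjoint time intervals, exploiting the independence of the noise increments (white noise) so that the composite event retains positive probability. Concretely: (i) since $A(\omega)$ is a.s. compact, fix a center $y_0$ and radius $R_0$ with $\PX(A\subset B(y_0,R_0))>0$; (ii) contraction on large sets shrinks the image of a suitable ball to a quarter of its radius with positive probability, while swift transitivity repositions a ball of prescribed radius into the special ball to which the contraction estimate applies, at the cost of the factor-$2$ dilation in Definition~\ref{def:ST}; alternating these over consecutive independent time windows yields a net shrinkage per round, so after finitely many rounds the image of $B(y_0,R_0)$ has diameter below any prescribed threshold with positive probability; (iii) one further application of global swift transitivity places this small image inside the open set $U$ furnished by local asymptotic stability; (iv) asymptotic stability on $U$ then sends the diameter of the image to $0$ along the times $t_n$ with positive probability. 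Using the cocycle bound $D(\theta_t\omega)=\mathrm{diam}(\phi_t(\omega,A(\omega)))\le\mathrm{diam}(\phi_t(\omega,B(y_0,R_0)))$ on the event $\{A\subset B(y_0,R_0)\}$, the composite shows $\PX(D=0)>0$, which by the first step upgrades to $\PX(D=0)=1$.

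The main obstacle, I expect, is making the chaining rigorous. Each step holds only with positive probability and for a ball whose center depends on the random outcome of the previous step, so one must invoke the independence of the cocycle over disjoint time intervals, $\phi_{s+t}(\omega)=\phi_t(\theta_s\omega)\circ\phi_s(\omega)$, together with a lower bound on the single-step probabilities that is uniform over the relevant configurations, in order to conclude that the finite composition still has positive probability. Closing the radius bookkeeping, ensuring the factor-$4$ contraction and factor-$2$ dilation net to a genuine reduction and that the final small ball actually fits inside $U$, is the delicate combinatorial part. A secondary point is the measurability and limiting argument that identifies the diameter of the attractor with the limit of the diameters of the cocycle images of the fixed ball, together with the indispensable use of the $0$-$1$ law to pass from positive-probability collapse to almost-sure collapse. $\hfill\square$
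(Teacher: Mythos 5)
The paper does not prove this proposition: it is recalled verbatim from \cite[Theorem 2.14]{FGS17}, so the only in-paper material to compare against is the proof of Theorem~\ref{thm:2.1} in Section~\ref{proof}, whose endgame (Lemma~\ref{lem:5.2}: asymptotic stability on an open set $U$ together with $\PX(A\subset U)>0$ forces $A$ to be a singleton) is exactly the endgame of your sketch. Your overall architecture --- reduce to $\PX(\mathrm{diam}(A)=0)>0$ via forward invariance of $\{\mathrm{diam}(A)=0\}$ under $\theta_t$ (surjectivity of $\phi_t(\omega,\cdot)\colon A(\omega)\to A(\theta_t\omega)$ plus measure preservation plus ergodicity), then produce the positive-probability collapse by alternating swift transitivity and contraction on large sets over disjoint time windows before handing off to local asymptotic stability --- is the same as in the original source and is sound.

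What your sketch does not close is precisely the step you flag, and it is worth naming its content. First, Definition~\ref{def:CLS} supplies, for each radius $R$, one \emph{specific} ball $B(y_R,R)$ on which the contraction estimate holds, so each round must begin by steering the current configuration into that particular ball; Definition~\ref{def:ST} does this only at the cost of doubling the radius, so the contraction must be invoked at scale $2r$ (yielding diameter $\le r/2$), and one must check the dilation and the factor $\tfrac14$ net to a genuine reduction per round --- they do, but this is where the proof lives. Second, after a contraction step the image has small diameter but sits in a ball with a \emph{random} center, whereas swift transitivity requires a deterministic starting ball. The standard repair, which your proposal gestures at but does not supply, is a countable-cover (or measurable-selection) argument: since $E$ is separable, cover it by balls $B(z_i,r/2)$ with deterministic centers; on the positive-probability contraction event the image lies in $B(z_{i},r)$ for some $i$, hence in $B(z_{i_0},r)$ for some fixed $i_0$ on an event of positive probability, and swift transitivity can then be applied from $B(z_{i_0},r)$. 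The independence of $\mathcal{F}_{s,t}$ over disjoint intervals, via the cocycle identity $\phi_{s+t}(\omega)=\phi_t(\theta_s\omega)\circ\phi_s(\omega)$, then multiplies the single-step probabilities; note this uses the white-noise filtration structure, which is not visible in the bare statement of the proposition but is assumed throughout (compare the use of independence of $\mathcal{F}_0$ and $\mathcal{F}_{0,t_0}$ in Lemma~\ref{lem:5.4}). With this intermediate chaining lemma in place, the remainder of your argument is correct.
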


\begin{remark} 
Swift transitivity is an irreducibility condition. Global swift transitivity often fails for SDEs with L\'evy noise or degenerate Gaussian noise. In general, Proposition~\ref{prop:2.1} could not be applied to scenarios involving degenerate noise or non-Gaussian noise. For specific examples, please refer to Remark~\ref{rem:2.4}.
\end{remark}

We give some definitions that can be seen as local versions of swift transitivity.
\begin{definition}\label{def:loc-ST} 
We say $\phi$ is weakly swift transitive at $z$ if, 
for every $R>0,x\in E$, there is a time $t_0>0$ such that  
$$
\PX \left( \phi_{t_0}(\cdot,B(x,R)) \subset B (z, 2 R) \right)>0.
$$
We say $\phi$ is weakly recurrent at $z$ if, 
for every $R>0$, there is a time $t_0>0$ such that  
$$
\PX \left( \phi_{t_0}(\cdot,B(z,R)) \subset B (z, 2 R) \right)>0.
$$
We say $\phi$ is strongly swift transitive at $z$ if, 
for every $R>0,x\in E$, there is a time $t_0 >0$ such that  
$$
\PX \left( \phi_{t_0}(\cdot,B(x,R)) \subset B \bigg(z, \frac{R}{2}\bigg) \right)>0.
$$
We say $\phi$ is strongly recurrent at $z$ if, 
for every $R>0$, there is a time $t_0 >0$ such that  
$$
\PX \left( \phi_{t_0}(\cdot,B(z,R)) \subset B \bigg(z, \frac{R}{2}\bigg) \right)>0.
$$
\end{definition}

We have the following result, which gives a necessary and sufficient criterion for synchronization.
\begin{theorem}  \label{thm:2.1}
Let $E$ be locally compact and $\sigma$-compact and $A$ be the weak attractor of an {\em{RDS}} $\phi$. Then the following statements are equivalent:\\ 
{\rm(i)} Synchronization occurs, i.e. $A(\omega)=\{a(\omega)\}$.\\
{\rm(ii)} There exists a point $z\in E$  and  $\varepsilon>0$ such that  $\phi$ is asymptotically stable on $B(z,\varepsilon)$ and  $\phi$ is strongly swift transitive at $z$.\\
{\rm(iii)} There exists a point $z\in E$  and  $\varepsilon>0$ such that  $\phi$ is asymptotically stable on $B(z,\varepsilon)$ and  $\phi$ is strongly recurrent at $z$.\\
{\rm(iv)} There exists a point $z\in E$  and  $\varepsilon>0$ such that  $\phi$ is asymptotically stable on $B(z,\varepsilon)$ and  $\PX(A\subset B(z,\varepsilon))>0.$ 
\end{theorem}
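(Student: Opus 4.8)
The plan is to prove the four statements equivalent through the single cycle $(i)\Rightarrow(ii)\Rightarrow(iii)\Rightarrow(iv)\Rightarrow(i)$, in which $(ii)\Rightarrow(iii)$ is free: strong swift transitivity at $z$ restricted to the instance $x=z$ is exactly strong recurrence at $z$, and the asymptotic-stability hypothesis is common to both. Two structural facts drive every sufficiency implication. The first is an ergodic $0$--$1$ law for the synchronization event $S=\{\omega:\operatorname{diam}A(\omega)=0\}$: it is forward invariant, since $A(\theta_t\omega)=\phi_t(\omega,A(\omega))$ is the continuous image of a single point whenever $A(\omega)$ is a singleton, and as $\theta_t$ preserves $\PX$ this yields $S=\theta_t^{-1}S$ modulo null sets, so ergodicity of the driving flow gives $\PX(S)\in\{0,1\}$ and it suffices to show $\PX(S)>0$. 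The second is the independence of past and future for the white-noise RDS: $A(\omega)$ is $\mathcal{F}_{-\infty}^0$-measurable while $\phi_T(\omega,\cdot)$ is $\mathcal{F}_0^T$-measurable, so a localization event for the attractor and a contraction event for the cocycle are independent, and this is what allows them to be combined.

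For the necessity direction $(i)\Rightarrow(ii)$ I would write $A(\omega)=\{a(\omega)\}$, let $\mu$ be the ($\theta$-stationary) law of $a$, and pick $z\in\operatorname{supp}\mu$ with any $\varepsilon>0$. Forward attraction in probability to the single point $a(\theta_t\cdot)$ makes $\operatorname{diam}\phi_t(\cdot,B(z,\varepsilon))\to0$ in probability, and an a.s.-convergent subsequence $t_n$ yields asymptotic stability on $B(z,\varepsilon)$. For strong swift transitivity at $z$, fix $x$ and $R$; for large $t_0$ the diameter of $\phi_{t_0}(\cdot,B(x,R))$ is below $R/4$ with probability near $1$, while its approximate centre $\phi_{t_0}(\cdot,x)\approx a(\theta_{t_0}\cdot)$ lies in $B(z,R/8)$ with probability at least $\mu(B(z,R/16))-o(1)>0$ because $z\in\operatorname{supp}\mu$; intersecting a near-certain event with one of fixed positive probability is positive, and there $\phi_{t_0}(\cdot,B(x,R))\subset B(z,R/2)$.

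For $(iii)\Rightarrow(iv)$ I would first funnel large balls toward $z$: strong recurrence at every scale lets me compose, over disjoint (hence independent) time windows, inclusions $\phi_{\tau_j}(\cdot,B(z,R/2^{j-1}))\subset B(z,R/2^{j})$ until the radius falls below $\varepsilon$, giving $\PX(\phi_\tau(\cdot,B(z,R))\subset B(z,\varepsilon))>0$ for each $R$. Local compactness makes $A(\omega)$ almost surely bounded, so $\PX(A\subset B(z,R))\ge\tfrac12$ for large $R$. Reading the attractor backwards, $A(\omega)=\phi_\tau(\theta_{-\tau}\omega,A(\theta_{-\tau}\omega))$, and on the intersection of $\{A(\theta_{-\tau}\cdot)\subset B(z,R)\}$, which is measurable with respect to the noise before time $-\tau$, with the funnel event over $[-\tau,0]$, which is independent of it, one gets $A(\omega)\subset\phi_\tau(\theta_{-\tau}\omega,B(z,R))\subset B(z,\varepsilon)$; hence $\PX(A\subset B(z,\varepsilon))>0$.

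The decisive step is $(iv)\Rightarrow(i)$. Set $p=\PX(A\subset B(z,\varepsilon))>0$ and let $C=\{\lim_n\operatorname{diam}\phi_{t_n}(\cdot,B(z,\varepsilon))=0\}$ have $q=\PX(C)>0$. For each fixed $\delta>0$, since $C$ is contained in the $\liminf$ of the events $\{\operatorname{diam}\phi_{t_n}(\cdot,B(z,\varepsilon))<\delta\}$, Fatou produces a time $T=T(\delta)$ with $\PX(\operatorname{diam}\phi_T(\cdot,B(z,\varepsilon))<\delta)\ge q/2$. The event $\{A\subset B(z,\varepsilon)\}$ is past-measurable and this contraction event is future-measurable, so by independence their intersection has probability at least $pq/2$; there $A(\theta_T\omega)=\phi_T(\omega,A(\omega))\subset\phi_T(\omega,B(z,\varepsilon))$ has diameter below $\delta$, and $\theta_T$-invariance of $\PX$ gives $\PX(\operatorname{diam}A<\delta)\ge pq/2$. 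Because this lower bound is uniform in $\delta$, continuity of measure from above along $\delta\downarrow0$ forces $\PX(\operatorname{diam}A=0)\ge pq/2>0$, and the $0$--$1$ law upgrades it to full synchronization. I expect the main obstacle to be precisely this passage from approximate to exact collapse: positive-probability local contraction makes $\operatorname{diam}A$ only arbitrarily small, and turning ``small with uniform positive probability'' into ``zero with positive probability'' is what forces the combined use of independence of past and future, the $\delta$-uniform estimate feeding continuity from above, and the ergodic $0$--$1$ law. Keeping that estimate genuinely uniform while $T(\delta)\to\infty$, and extracting tightness of $A$ from local compactness, are the delicate points.
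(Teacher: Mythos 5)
Your proposal is correct and follows essentially the same route as the paper: the same cycle $(i)\Rightarrow(ii)\Rightarrow(iii)\Rightarrow(iv)\Rightarrow(i)$, with $z$ chosen in the support of the law of $a$, the union/independence bound for strong swift transitivity, iterated strong recurrence plus independence of $\mathcal{F}_0$ and $\mathcal{F}_{0,t_0}$ for $(iii)\Rightarrow(iv)$, and the contraction-plus-$0$--$1$-law argument for $(iv)\Rightarrow(i)$. The only difference is that the paper delegates $(iv)\Rightarrow(i)$ to \cite[Lemma 2.5]{FGS17}, whereas you spell out that argument (correctly).
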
 
\begin{remark} 
Every metric space $(E,d)$ that is locally compact and $\sigma$-compact allows an equivalent metric $d'$ such that $(E, d')$ is Heine-Borel.  As a result, a weak attractor attracts all closed and bounded subsets (\cite{FGS17}). Every finite-dimensional Euclidean space is locally compact and $\sigma$-compact. 
\end{remark}
\begin{remark} 
Assume that $E$ is locally compact, $\sigma$-compact and synchronization holds, then there exists a point $z\in E$  and  $\varepsilon>0$ such that  $\phi$ is asymptotically stable on $B(z,\varepsilon)$,   contracting on large sets and weakly swift transitive at $z$. For proof, we refer to Section 5.   Conversely, it seems that the above conditions (locally asymptotically stable, contracting on large sets, locally swift transitive) also imply synchronization, but we cannot guarantee this at the moment.
\end{remark}
\begin{remark}\label{rem:2.4}   Consider the one-dimensional SDE \cite[Example 2.15]{FGS17}
   $$
   \dif X_t= X_t \dif W_t. 
   $$
 $A(\omega)=\{0\}$ is the weak attractor, synchronization occurs, and swift transitivity fails. So Proposition ~\ref{prop:2.1} can not be applied, but Theorem~\ref{thm:2.1} can be.  
 
Another example from  \cite[Example 3.14]{HM06} reads
$$
   \dif X_t= -X_t \dif t, \quad \dif Y_t= (Y_t-Y_t^3)\dif t +\dif W_t.
$$
Which is an SDE system with highly degenerate noise that does not even satisfy the H\"ormander condition.
Due to Theorem~\ref{thm:2.1}, local swift transitivity property and synchronization hold.  But the global swift transitivity fails again.  
\end{remark}

Without the additional compact assumptions on $E$, we have the following corollary on sufficient conditions for synchronization.
\begin{corollary}  \label{cor:2.1}
 Let $A$ be the unique weak attractor of an {\em RDS} $\phi$. If one of the following statements holds, then synchronization occurs.\\
{\rm(i)} There exists a point $z\in E$  and  $\varepsilon>0$ such that  $\phi$ is asymptotically stable on $B(z,\varepsilon)$ and  $\PX(A\subset B(z,\varepsilon))>0.$ \\ 
{\rm(ii)} There exists a point $z\in E$  and  $\varepsilon>0$ such that  $\phi$ is asymptotically stable on $B(z,\varepsilon)$ and  $\phi$ is strongly swift transitive at $z$.\\
{\rm(iii)} There exists a point $z\in E$  and  $\varepsilon>0$ such that  $\phi$ is asymptotically stable on $B(z,\varepsilon)$ and  $\phi$ is strongly recurrent at $z$.
\end{corollary}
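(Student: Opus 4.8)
The plan is to establish the chain of implications (ii) $\Rightarrow$ (iii) $\Rightarrow$ (i) $\Rightarrow$ synchronization, so that each condition reduces to the next and the last one forces $A(\omega)$ to be a singleton. Throughout I would invoke the standing white-noise framework of the paper: the metric dynamical system $(\Omega,\cF,\PX,(\theta_t))$ is ergodic, increments over disjoint time windows are independent, the fibre $A(\omega)$ is a random compact set measurable with respect to the past, the cocycle $\phi_T(\theta_{-T}\omega,\cdot)$ depends only on the increments over $[-T,0]$, and $A$ is strictly invariant, $\phi_t(\omega,A(\omega))=A(\theta_t\omega)$ almost surely. The implication (ii) $\Rightarrow$ (iii) is then immediate: taking $x=z$ in the definition of strong swift transitivity at $z$ is exactly strong recurrence at $z$.

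For (iii) $\Rightarrow$ (i) I would first use compactness of $A(\omega)$ to pick $R$ large enough that $\PX(A\subset B(z,R))>\tfrac12$. I would then iterate strong recurrence along the radii $R_j=R/2^{j}$: for each $j$ there is a time $s_j$ with $\PX(\phi_{s_j}(\cdot,B(z,R_j))\subset B(z,R_{j+1}))>0$, and composing these via the cocycle property over the disjoint windows $[T_j,T_{j+1}]$, where $T_j=s_0+\cdots+s_{j-1}$, together with independence of increments, gives after finitely many steps $k$ with $R_k<\varepsilon$ a time $T=T_k$ with $\PX(\phi_T(\cdot,B(z,R))\subset B(z,\varepsilon))>0$. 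Finally, writing $A(\omega)=\phi_T(\theta_{-T}\omega,A(\theta_{-T}\omega))$ and using that $\{A(\theta_{-T}\omega)\subset B(z,R)\}$ is controlled by the past of $\theta_{-T}\omega$ while $\{\phi_T(\theta_{-T}\omega,B(z,R))\subset B(z,\varepsilon)\}$ is controlled by the increments over $[-T,0]$, these two events are independent, whence $\PX(A\subset B(z,\varepsilon))\ge \tfrac12\,\PX(\phi_T(\cdot,B(z,R))\subset B(z,\varepsilon))>0$.

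The heart of the proof is (i) $\Rightarrow$ synchronization. Set $D(\omega)=\mathrm{diam}(A(\omega))$; by invariance $D(\theta_t\omega)=\mathrm{diam}(\phi_t(\omega,A(\omega)))$, and since $\phi_t(\omega,\cdot)$ is a bijection, the event $\{D=0\}$ is $\theta$-invariant. Asymptotic stability on $B(z,\varepsilon)$ gives a deterministic sequence $t_n\uparrow\infty$ and an event $G$ with $\PX(G)>0$ on which $\mathrm{diam}(\phi_{t_n}(\omega,B(z,\varepsilon)))\to0$; as $G$ depends on future increments while $\{A\subset B(z,\varepsilon)\}$ is past-measurable, the two are independent, so their intersection $H$ has positive probability, and on $H$ one has $D(\theta_{t_n}\omega)\le \mathrm{diam}(\phi_{t_n}(\omega,B(z,\varepsilon)))\to0$. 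To upgrade this to $\PX(D=0)>0$ I would run a measure-preserving argument: with $H_k=\{D<1/k\}$, the inclusion $H\subset \liminf_n\theta_{t_n}^{-1}H_k$ and Fatou for the measure-preserving maps $\theta_{t_n}$ give $\PX(H_k)\ge\PX(H)$ for every $k$, whence $\PX(D=0)=\lim_k\PX(H_k)\ge\PX(H)>0$. Ergodicity then promotes the $\theta$-invariant event $\{D=0\}$ to full measure, i.e. $A(\omega)$ is a singleton almost surely.

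The step I expect to be the main obstacle is the passage from ``$D(\theta_{t_n}\omega)\to0$ on a positive-probability set'' to ``$D=0$ almost surely'': the contraction occurs only along a fixed sequence of shifted times and only with positive probability, so nothing pathwise-monotone is available, and one must combine measure preservation (to transport the small-diameter event back to time $0$) with the $0$--$1$ law furnished by ergodicity. A secondary technical point, used repeatedly, is the independence of events attached to disjoint time windows together with the past-measurability of $A(\omega)$; this is precisely what allows the various positive-probability events to be multiplied rather than merely intersected, and it is the feature that makes the argument insensitive to degeneracy or non-Gaussianity of the noise.
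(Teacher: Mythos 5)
Your proposal is correct and follows essentially the same route as the paper: the chain (ii) $\Rightarrow$ (iii) (trivially, taking $x=z$), (iii) $\Rightarrow$ (i) by iterating strong recurrence with independence of $\mathcal{F}_0$ and $\mathcal{F}_{0,t_0}$ (the paper's Lemma~\ref{lem:5.4}), and (i) $\Rightarrow$ synchronization via the diameter/ergodicity argument (the paper's Lemma~\ref{lem:5.2}, which it delegates to \cite[Lemma 2.5]{FGS17} and which you have reproduced faithfully). The only cosmetic quibble is that the $\theta$-invariance of $\{\mathrm{diam}(A)=0\}$ modulo null sets follows from forward invariance plus measure preservation, not from bijectivity of $\phi_t(\omega,\cdot)$, which is not assumed.
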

\begin{remark}     
The proof of Corollary~\ref{cor:2.1} lies in the proof of Theorem~\ref{thm:2.1}. 
This result may be useful for SPDEs driven by degenerate additive noise or multiplicative noise.
\end{remark}

%%%%%%%%%%%%%%%%%%%%%%%%%%%%%%%%%%%%%%%%%%%%%%%%%%%%%%

\subsection{\bf Weak synchronization } \label{S22}

%%%%%%%%%%%%%%%%%%%%%%%%%%%%%%%%%%%%%%%%%%%%%%%%%%%%%%
We will assume throughout this subsection that $\phi$ is a white noise RDS and $\rho$ is the Markovian invariant measure.  

\begin{definition}
    We say that weak synchronization occurs for a given {\em RDS} $\phi$ if there is a minimal weak point attractor $A(\omega)$ being a singleton, for $\PX$-a.e. $\omega \in \Omega$.
\end{definition}

It is easy to see that synchronization implies weak synchronization. We recall some definitions and a sufficient condition for weak synchronization~\cite{FGS17}.

\begin{definition}\label{def:WAS} {\em(\cite[Definition 2.17]{FGS17})}
Let $U \subset E$ be a (deterministic) non-empty open set. We say that $\phi$ is weakly asymptotically stable on $U$ if there exists a (deterministic) sequence $t_n \uparrow \infty$, and a set $\mathcal{M} \subset \Omega$ of positive $\PX$-measure, such that for all $x, y \in U$,
\[
\lim_{n \to \infty}  1_{\mathcal{M}}   d(\phi_{t_n}(\cdot, x), \phi_{t_n}(\cdot, y))  = 0, \ \text{in probability.}
\]
\end{definition}

\begin{definition}{\em(\cite[Definition 2.22]{FGS17})}
We say that $\phi$ is pointwise strongly swift transitive if there is a time $t > 0$ such that for every $x, x' \in E$ and every (arrival) point $y$,
\[
P\left( \phi_t(\cdot,\{x,x'\}) \subset B(y,2d(x,x'))\right) > 0.
\]
\end{definition}

\begin{prop}\label{prop:2.2} {\em(\cite[Theorem 2.23]{FGS17})}
Assume that $\phi$ has right-continuous trajectories, is strongly mixing, weakly asymptotically stable on $U$ with $\rho(U) > 0$, pointwise strongly swift transitive, and
\begin{equation}\label{eq:2.1}
\liminf_{t \to \infty}  d(\phi_t(\omega, x), \phi_t(\omega, y)) = 0 \quad \text{a.s.},\quad \forall x, y \in E
\end{equation} 
Then, there is a minimal weak point attractor $A$ consisting of a single random point $a(\omega)$ and $A(\omega)  = \{a(\omega)\} \quad \PX\text{-a.s.}$, i.e. weak synchronization holds.
\end{prop}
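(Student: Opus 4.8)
The plan is to prove weak synchronization by establishing that the two-point distance tends to $0$ \emph{in probability}, i.e. $d(\phi_t(\omega,x),\phi_t(\omega,y))\to 0$ in probability for every $x,y\in E$, and then to read off from this a single random point $a(\omega)$ with $A(\omega)=\{a(\omega)\}$. Concretely, I would study the two-point motion $(\phi_t(\cdot,x),\phi_t(\cdot,y))$, which is Markov because $\phi$ is a white noise RDS, and track the scalar functional $g_t(\omega):=d(\phi_t(\omega,x),\phi_t(\omega,y))$. The hypothesis \eqref{eq:2.1} already supplies $\liminf_{t\to\infty}g_t=0$ a.s.; the entire task is to upgrade this \emph{recurrence to the diagonal} into genuine convergence to $0$. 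Once convergence in probability is secured, the pullback limit $a(\omega):=\lim_{t\to\infty}\phi_t(\theta_{-t}\omega,x)$ (in probability) exists, is independent of $x$, and furnishes the minimal weak point attractor; this identification is standard and uses the $\theta$-invariance of $\PX$ to transfer the forward statement to the pullback statement, together with the equivalence (valid for right-continuous, strongly mixing white-noise RDS) between weak synchronization and convergence in probability of the two-point distance.

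The heart of the argument is a uniform \emph{contraction block}: I would show there exist $\delta_0>0$, a deterministic time $T>0$ and $p_0>0$ such that, whenever $d(u,v)<\delta_0$,
\[
\PX\Big(d(\phi_T(\cdot,u),\phi_T(\cdot,v))<\tfrac12\,d(u,v)\Big)\ \ge\ p_0 .
\]
This block is manufactured by composing two mechanisms. First fix a point $z\in U$ and a radius $r$ with $B(z,2r)\subset U$, and set $\delta_0:=r$. For $d(u,v)<\delta_0$, pointwise strong swift transitivity with arrival point $z$ produces, at its fixed time, a positive-probability event on which $\phi(\cdot,\{u,v\})\subset B(z,2d(u,v))\subset U$; in particular both trajectories enter $U$ with mutual distance at most $4d(u,v)$. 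Second, weak asymptotic stability on $U$ supplies a sequence $t_n\uparrow\infty$ and a set $\mathcal M$ of positive $\PX$-measure on which $d(\phi_{t_n}(\cdot,u'),\phi_{t_n}(\cdot,v'))\to 0$ in probability for $u',v'\in U$. Concatenating the swift-transitive leg with a stable leg of length $t_n$ (for $n$ large) via the cocycle property drives the distance below $\tfrac12 d(u,v)$ with the claimed uniform success probability, the factor-$4$ inflation of the first leg being harmless since the second leg sends the distance toward $0$.

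With the contraction block in hand I would close the argument by a renewal/Borel--Cantelli scheme driven by the recurrence \eqref{eq:2.1}. Since $\liminf_t g_t=0$ a.s. and trajectories are right-continuous, the process visits $\{g<\delta_0\}$ at a sequence of return times $\tau_1<\tau_2<\cdots$; at each such time I attempt the contraction block. Strong mixing and the white-noise/Markov structure are used to refresh near-independence across successive attempts, so that the conditional success probabilities stay bounded below by $p_0$; the conditional Borel--Cantelli lemma then forces infinitely many successes, and the accumulated halvings, reconciled with the recurrence, yield $\PX(g_t\ge\varepsilon)\to 0$ for every $\varepsilon>0$, i.e. convergence in probability to $0$.

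The step I expect to be the main obstacle is securing the \emph{uniformity} of the contraction block as $d(u,v)\downarrow 0$ while reconciling the different modes of convergence. Pointwise strong swift transitivity holds only at a single deterministic time and only with \emph{some} positive probability, whereas weak asymptotic stability is merely an \emph{in-probability} statement along a subsequence and only on a proper subset $\mathcal M\subset\Omega$; splicing the almost-sure recurrence \eqref{eq:2.1} with these in-probability, positive-measure ingredients, without letting the success probability $p_0$ degenerate and while keeping the target ball $B(z,2d(u,v))$ inside $U$, is the delicate point, and is exactly where the Markov structure and strong mixing must be invoked with care. The final identification of $a(\omega)$ and the \emph{minimality} of $A$ (that $\{a(\omega)\}$ is the smallest weak point attractor, not merely a singleton that attracts) is then routine by comparison with the support of the disintegration of $\rho$, once convergence in probability has been established.
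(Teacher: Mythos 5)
Your proposal takes a genuinely different route from the paper and, as written, contains gaps that I do not believe can be closed from the stated hypotheses alone. The paper (in its proof of the closely related Theorem~\ref{thm:2.2}, which follows \cite{FGS17}) never attempts to prove $d(\phi_t(\cdot,x),\phi_t(\cdot,y))\to 0$ in probability by a contraction--renewal scheme. Instead it invokes the structural result \cite[Proposition 2.20]{FGS17}: strong mixing together with weak asymptotic stability already forces the minimal weak point attractor to be a finite set $A(\omega)=\{a_1(\omega),\dots,a_N(\omega)\}$ of $\mathcal{F}_0$-measurable points. The whole proof then reduces to showing $N=1$ by contradiction: if $N\ge 2$, the minimal gap $F(\omega)=\min_{i\ne j}d(a_i(\omega),a_j(\omega))$ is positive a.s., so by $\theta$-invariance $\PX\bigl(d(\phi_t(\cdot,a_1),\phi_t(\cdot,a_2))\le\eta\bigr)$ is controlled by $\PX(F\le\eta)$, uniformly in $t$, and can be made small by choosing $\eta$ small; on the other hand, \eqref{eq:2.1}, right-continuity, swift transitivity and weak asymptotic stability concatenate (via independence of the increments of the white-noise filtration) to give a positive lower bound for this probability along a subsequence that does not degenerate as $\eta\downarrow 0$. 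Your proposal omits this key finite-attractor ingredient entirely, and without it the problem becomes substantially harder than the theorem being proved.

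The concrete gaps in your scheme are these. First, the ``uniform contraction block'' is not derivable from the hypotheses: weak asymptotic stability on $U$ is an in-probability statement, along a subsequence $t_n$, on a set $\mathcal{M}$ of positive measure, for each \emph{fixed} pair of points of $U$; it supplies no deterministic time $T$ and no success probability $p_0$ that are uniform over all pairs $(u,v)$ with $d(u,v)<\delta_0$, let alone uniform as $d(u,v)\downarrow 0$. After the swift-transitive leg the positions are random, so one would additionally need uniformity over the (random) landing points in $U$; that kind of locally uniform geometric contraction is what a negative Lyapunov exponent would provide, but it is not assumed here. You correctly identify this as the main obstacle, but it is not one that can be deferred --- it is where the argument fails. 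Second, even granting the block, ``infinitely many halvings'' interleaved with the recurrence \eqref{eq:2.1} does not yield convergence in probability, because nothing in the hypotheses bounds the expansion of the two-point distance between successful contraction attempts; one would need to show that once small the distance \emph{stays} small with high probability, which is precisely the step the paper circumvents by working with the finite set of $\mathcal{F}_0$-measurable attractor points. I recommend restructuring the argument around \cite[Proposition 2.20]{FGS17} as in the paper's proof of Theorem~\ref{thm:2.2}.
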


\begin{remark}
 The pointwise strongly swift transitive property is too strong to verify.  For a simple example,
$
\dot{x}=-x.
$
The pointwise strongly swift transitive property fails. Proposition~\ref{prop:2.2} can not be applied, but (weak) synchronization holds. 
\end{remark}

As an extension of the globally pointwise strongly swift transitive property, 
we introduce the following definition. 

\begin{definition}
We say that $\phi$ is pointwise strongly swift transitive at $z$ if there is a time $t > 0$ such that for every $x\neq x' \in E$,
\[
P\left( \phi_t(\cdot,\{x,x'\}) \subset B(z,2d(x,x'))\right) > 0.
\]
\end{definition}

The following result is a slight extension of Proposition~\ref{prop:2.2}.
\begin{theorem}\label{thm:2.2}
Assume that $\phi$ has right-continuous trajectories and is strongly mixing.  There exist a point $z\in \mathrm{supp}(\rho)$  and  $\varepsilon>0$ such that $\phi$ is weakly asymptotically stable on $B(z, \varepsilon)$,  pointwise strongly swift transitive at $z$, and
\begin{equation}\label{eq:2.3}
\liminf_{t \to \infty}  d(\phi_t(\omega, x), \phi_t(\omega, y)) = 0 \quad \text{a.s.},\quad \forall x, y \in E
\end{equation} 
Then, there is a minimal weak point attractor $A$ consisting of a single random point $a(\omega)$ and $A(\omega) = \{a(\omega)\}~ \PX\text{-a.s.}$, i.e. weak synchronization holds.
\end{theorem}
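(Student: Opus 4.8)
The plan is to follow the scheme of Proposition~\ref{prop:2.2}, the point being that the two weakened hypotheses still suffice because every contraction is routed through the single hub $z$. Since $z\in\mathrm{supp}(\rho)$ we have $\rho(B(z,\varepsilon))>0$, so the ball $B(z,\varepsilon)$ plays exactly the role of the set $U$ in Proposition~\ref{prop:2.2}, while the arrival point in the swift transitivity is always taken to be $z$. The target is to show that for all $x,y\in E$,
\begin{equation*}
\lim_{t\to\infty} d(\phi_t(\omega,x),\phi_t(\omega,y))=0 \quad\text{in probability}.
\end{equation*}
For a strongly mixing white noise RDS with right-continuous trajectories this is equivalent to weak synchronization (the equivalence recorded in Section~\ref{Intro}), and by definition that is precisely the statement that the minimal weak point attractor satisfies $A(\omega)=\{a(\omega)\}$ almost surely, as in Proposition~\ref{prop:2.2}.

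First I would establish a one-step local contraction alternating a funnel and a squeeze. If $d(x,x')<\varepsilon/2$, then pointwise strong swift transitivity at $z$ furnishes a time $t>0$ with $\PX\bigl(\phi_{t}(\cdot,\{x,x'\})\subset B(z,2d(x,x'))\bigr)>0$, so with positive probability both images lie in $B(z,\varepsilon)$ and within distance $4d(x,x')$ of each other. Applying weak asymptotic stability on $B(z,\varepsilon)$ (Definition~\ref{def:WAS}) along the deterministic sequence $t_n\uparrow\infty$ on the event $\mathcal M$, and composing via the cocycle property, I would deduce the existence of $T>0$, $\delta_0\in(0,\varepsilon/2)$ and $p>0$ such that whenever $d(x,x')<\delta_0$,
\begin{equation*}
\PX\bigl(d(\phi_{T}(\cdot,x),\phi_{T}(\cdot,x'))<\tfrac{\delta_0}{2}\bigr)\ge p.
\end{equation*}
In words: a pair that is already close is, with probability bounded below, funneled through $z$ into $B(z,\varepsilon)$ and there contracted by a definite factor.

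Next I would exploit hypothesis~\eqref{eq:2.3}. Because $\liminf_{t\to\infty}d(\phi_t(\omega,x),\phi_t(\omega,y))=0$ almost surely, the pair enters the regime $d(\phi_t(x),\phi_t(y))<\delta_0$ at arbitrarily large random times; at each such entrance the local contraction above gives, through the Markov/cocycle structure, a success probability at least $p$. The decisive step is to convert these recurring positive-probability attempts into genuine convergence: invoking strong mixing, I would space the successive attempt windows far enough apart that the corresponding success events become asymptotically independent, and that strong mixing also drives the law of the two-point motion close to the invariant law on which the per-attempt success probability is bounded below. A geometric-trials (Borel--Cantelli type) argument then shows that after finitely many attempts a contraction succeeds, and, iterating, that $d(\phi_t(x),\phi_t(y))\to 0$ in probability. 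The equivalence recalled above promotes this to weak synchronization, producing $a(\omega)$ and the minimal weak point attractor verbatim as in Proposition~\ref{prop:2.2}.

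The main obstacle is precisely this last upgrade. Condition~\eqref{eq:2.3} only guarantees recurrence to the small-distance regime, not that any contraction sticks: after a near approach the pair may separate again and drift away from $z$, so a fresh funnel is needed at every round, and without independence the positive-probability successes could fail to accumulate. Restoring the (asymptotic) independence of the attempts through strong mixing, securing a uniform lower bound on the per-attempt success probability, and organizing the stopping times so that the probability of ``no success up to time $t$'' decays, is the technical heart, mirroring the corresponding step in the proof of Proposition~\ref{prop:2.2}. The only genuinely new verification is that localizing the arrival point to the single $z$ and the stability set to the single ball $B(z,\varepsilon)$ does not weaken the argument; this is harmless because every attempt is implemented through the common hub $z$, so the global arrival points used in Proposition~\ref{prop:2.2} are never required.
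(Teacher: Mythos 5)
Your overall strategy differs from the paper's, and the step you yourself flag as ``the technical heart'' is a genuine gap, not a routine verification. You aim to prove the full two-point statement $\lim_{t\to\infty} d(\phi_t(\cdot,x),\phi_t(\cdot,y))=0$ in probability for all $x,y$ and then invoke the equivalence of Theorem~\ref{thm:2.3}. But your funnel-and-squeeze mechanism only shows that, at certain (random or subsequential) times, the distance is small with probability bounded below; hypothesis~\eqref{eq:2.3} gives recurrence to the near-diagonal regime, not persistence there, and weak asymptotic stability (Definition~\ref{def:WAS}) gives convergence in probability along a fixed sequence $t_n$ for each fixed pair in $B(z,\varepsilon)$, with no uniformity in the pair and no control between the times $t_n$ or after a failed attempt. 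Convergence \emph{in probability as $t\to\infty$} requires ruling out re-expansion at arbitrarily late times, and nothing in your sketch does that. Your appeal to strong mixing to make the attempts ``asymptotically independent'' is also misdirected: for a white noise RDS the increments over disjoint windows are exactly independent already, so independence is not the obstruction; the obstruction is that a geometric-trials bound on ``no success by time $t$'' says nothing about the distance at times after a success.

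The paper avoids this entirely. It first invokes \cite[Proposition 2.20]{FGS17} (using strong mixing, weak asymptotic stability on $B(z,\varepsilon)$, and $\rho(B(z,\varepsilon))>0$ from $z\in\mathrm{supp}(\rho)$) to obtain a \emph{finite} minimal weak point attractor $A(\omega)=\{a_1(\omega),\dots,a_N(\omega)\}$, and then argues by contradiction: if $N\ge 2$, the minimal gap $F(\omega)=\min_{i\ne j}d(a_i,a_j)$ is positive a.s.\ and $\EX(1\wedge F(\theta_t\omega))$ is constant in $t$ by stationarity, which bounds $\EX\bigl(1\wedge d(\phi_t(\cdot,a_1),\phi_t(\cdot,a_2))\bigr)$ away from $0$ uniformly in $t$. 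A \emph{single} pass through your chain --- \eqref{eq:2.3} plus right-continuity to get the two attractor points within $\varepsilon/3$ at a deterministic time with positive probability, pointwise strong swift transitivity at $z$ to put both in $B(z,\varepsilon)$, then weak asymptotic stability to squeeze them below any $\eta$ with probability bounded below along $t_n$ --- already contradicts that uniform lower bound. No iteration, no uniform per-attempt success probability, and no control of re-expansion are needed. If you want to keep your architecture, you should redirect it at this contradiction rather than at the full two-point convergence; as written, the decisive step of your argument is missing.
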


\begin{remark}
It seems that the locally pointwise strongly swift transitive property is not yet a necessary condition for weak synchronization.
\end{remark}

If weak synchronization occurs, then for all $x,y\in E$, 
$$
\lim_{t \to \infty}  d(\phi_t(\omega, x), \phi_t(\omega, y)) = 0 \quad \text{ in probability}.
$$
This implies weak asymptotic stability and equation~\eqref{eq:2.1}. In contrast to Proposition~\ref{prop:2.2}, we have the following result.

\begin{theorem}\label{thm:2.3}
Assume that $\phi$ has right-continuous trajectories. The following statements are equivalent:\\ 
{\rm(i)} Weak synchronization occurs.\\
{\rm(ii)} $\phi$ is strongly mixing,   and for all  $x,y\in E$,
\begin{equation}\label{eq:mixing}
\lim\limits_{t \to \infty}  d(\phi_t(\omega, x), \phi_t(\omega, y)) = 0 \ \text{ in probability}.
\end{equation}
\end{theorem}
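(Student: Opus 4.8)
The plan is to prove the two implications separately, using throughout that for a white noise RDS the law of $\phi_t(\cdot,x)$ under $\PX$ is the Markov kernel $P_t(x,\cdot)$, that the driving flow $\theta_t$ preserves $\PX$, and that $\rho$ is a fixed invariant measure for $P_t$.

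For $(i)\Rightarrow(ii)$, weak synchronization gives the minimal weak point attractor $A(\omega)=\{a(\omega)\}$. Since a weak point attractor attracts every point in the pullback-in-probability sense, $d(\phi_t(\theta_{-t}\omega,x),a(\omega))\to 0$ in probability for each $x$; comparing two initial points yields $d(\phi_t(\theta_{-t}\omega,x),\phi_t(\theta_{-t}\omega,y))\to 0$ in probability, and because $\theta_{-t}$ preserves $\PX$ the law of $d(\phi_t(\omega,x),\phi_t(\omega,y))$ is unchanged, giving \eqref{eq:mixing}. To extract strong mixing I would test against a bounded Lipschitz $f$: \eqref{eq:mixing} forces $P_tf(x)-P_tf(y)=\EX[f(\phi_t(\cdot,x))-f(\phi_t(\cdot,y))]\to 0$ for all $x,y$, and integrating in $y$ against $\rho$ (dominated convergence, $f$ bounded) together with invariance $\int P_tf\,d\rho=\int f\,d\rho$ gives $P_tf(x)\to\int f\,d\rho$ for every $x$. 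Thus $P_t(x,\cdot)\rightharpoonup\rho$ for all $x$, which is strong mixing (and pins down $\rho$ as the unique invariant measure).

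For $(ii)\Rightarrow(i)$, the substantive direction, I would pass to the two-point motion $Q_t((x,y),\cdot)=\mathcal L(\phi_t(\cdot,x),\phi_t(\cdot,y))$ on $E\times E$ driven by the \emph{common} noise. First build a $Q_t$-invariant measure $\nu$ with both marginals $\rho$: strong mixing makes the one-point laws $P_t(x,\cdot)$ converge to the tight measure $\rho$, so $Q_t((x,y),\cdot)$ is tight and any weak limit of its Cesàro averages is $Q_t$-invariant with marginals $\rho$. Next I would show $\nu$ concentrates on the diagonal $\Delta=\{(u,u)\}$: starting the stationary process from $\nu$, \eqref{eq:mixing} (valid for every deterministic pair, hence for $\nu$-a.e. pair) and dominated convergence give $\EX_\nu[d(X_t,Y_t)\wedge 1]\to 0$, while stationarity keeps the law of $d(X_t,Y_t)\wedge 1$ constant in $t$; a nonnegative quantity whose law is constant yet tends to $\delta_0$ must vanish, so $\nu(\Delta)=1$.

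Finally I would transfer ``the two-point equilibrium lives on the diagonal'' into ``the statistical equilibrium over $\omega$ is a single point.'' Using the disintegration $\mu(d\omega,dx)=\PX(d\omega)\,\mu_\omega(dx)$ of the Markov invariant measure on $\Omega\times E$, with $\mu_\omega$ measurable with respect to the past and satisfying $(\phi_t(\omega,\cdot))_\ast\mu_\omega=\mu_{\theta_t\omega}$, the coupling $\nu=\int_\Omega(\mu_\omega\otimes\mu_\omega)\,\PX(d\omega)$ is $Q_t$-invariant with marginals $\rho$; by the previous step $\nu(\Delta)=1$, whence $(\mu_\omega\otimes\mu_\omega)(\Delta)=1$ for $\PX$-a.e. $\omega$, forcing $\mu_\omega=\delta_{a(\omega)}$. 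Since the minimal weak point attractor, which exists here by right-continuity and strong mixing (cf.\ \cite{FGS17}), coincides with $\mathrm{supp}\,\mu_\omega$, this gives $A(\omega)=\{a(\omega)\}$. \emph{The main obstacle} is precisely this last transfer: identifying the correct $Q_t$-invariant coupling with the disintegration and justifying $(\phi_t(\omega,\cdot))_\ast\mu_\omega=\mu_{\theta_t\omega}$ together with the past-measurability that makes $\nu$ invariant. This is exactly where a naive pullback estimate breaks down, because the attractor is reached through $d(\phi_t(\theta_{-t}\omega,x),\phi_t(\theta_{-t}\omega,y))$ with a \emph{random} second argument, whereas \eqref{eq:mixing} only controls deterministic pairs; I therefore expect to route the argument through the statistical-equilibrium machinery for white noise RDS rather than through direct pullback contraction.
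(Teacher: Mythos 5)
Your proposal is correct, and for the substantive direction (ii) $\Rightarrow$ (i) it takes a recognizably different route from the paper, although both arguments turn on the same underlying mechanism. The paper invokes [FGS17, Proposition 2.20] to obtain a minimal weak point attractor consisting of finitely many $\mathcal{F}_0$-measurable points $a_1(\omega),\dots,a_N(\omega)$, assumes $N\ge 2$, and derives a contradiction by applying the freezing lemma to the pair $(a_1(\omega),a_2(\omega))$: since $\mathcal{F}_0$ is independent of $\mathcal{F}_{0,\infty}$, the quantity $\EX\bigl(1\wedge d(\phi_t(\cdot,a_1(\cdot)),\phi_t(\cdot,a_2(\cdot)))\bigr)$ is computed by first freezing a deterministic pair $(x,y)$, so \eqref{eq:mixing} plus dominated convergence sends it to $0$, contradicting the lower bound $\EX(1\wedge F(\theta_t\omega))=\EX(1\wedge F(\omega))>0$ supplied by invariance of the attractor. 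You instead pass to the two-point statistical equilibrium $\nu=\int_\Omega(\mu_\omega\otimes\mu_\omega)\,\PX(\dif\omega)$, prove $\nu(\Delta)=1$, deduce $\mu_\omega=\delta_{a(\omega)}$, and then identify the minimal weak point attractor with $\mathrm{supp}\,\mu_\omega$. Your verification that $\nu$ is $Q_t$-invariant is exactly the paper's freezing-lemma step in disguise (past-measurability of $\mu_\omega$ together with independence from $\mathcal{F}_{0,t}$), and your diagonal argument is the same dominated-convergence computation. What your route buys is that you never need the finite-$N$ structure of Proposition 2.20, only the identification $A=\mathrm{supp}\,\mu_\omega$ and the existence and invariance of the disintegration $\mu_\omega$ (both available from [FGS17] under strong mixing), which you correctly flag as the delicate point; your Krylov--Bogolyubov construction of a generic invariant coupling is then superfluous, since only the specific coupling built from $\mu_\omega$ is used. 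The direction (i) $\Rightarrow$ (ii), which the paper dismisses as straightforward, is handled correctly in your write-up.
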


Recall that $\phi$ is monotone with respect to initial conditions (see \cite{Chueshov02}) if for each $t,\omega$ and $x\leq y$, 
$$
\phi(t,\omega,x)\leq \phi(t,\omega,y).
$$
\begin{corollary}\label{cor:2.2}
Assume that a monotone RDS $\phi$ has right-continuous trajectories. The following  statements are equivalent:\\ 
{\rm(i)}  $\phi$ is strongly mixing. \\
{\rm(ii)} Weak synchronization occurs.
\end{corollary}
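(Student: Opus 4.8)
The plan is to deduce the corollary from Theorem~\ref{thm:2.3}, which for right-continuous RDS characterizes weak synchronization as the conjunction of strong mixing and the pointwise convergence \eqref{eq:mixing}. With this in hand the implication (ii)$\Rightarrow$(i) is immediate: if weak synchronization holds, Theorem~\ref{thm:2.3} already guarantees that $\phi$ is strongly mixing, so nothing further is needed. The whole content of the corollary therefore lies in the reverse implication (i)$\Rightarrow$(ii): assuming only strong mixing together with monotonicity, I must produce the in-probability convergence \eqref{eq:mixing}, after which Theorem~\ref{thm:2.3} closes the argument.

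To establish \eqref{eq:mixing} I would exploit the order structure. Fix first a comparable pair $x\le y$; by monotonicity $\phi_t(\omega,x)\le\phi_t(\omega,y)$ for every $t$ and $\omega$, so the difference is one-sided. Choose a bounded, strictly increasing, uniformly continuous test function $g$ (for instance $g=\arctan$ on $\R$). Then $g(\phi_t(\cdot,y))-g(\phi_t(\cdot,x))\ge 0$ almost surely, while strong mixing forces the laws $\mathcal{L}(\phi_t(\cdot,x))$ and $\mathcal{L}(\phi_t(\cdot,y))$ to converge weakly to the invariant measure $\rho$, so that both $\EX\,g(\phi_t(\cdot,x))$ and $\EX\,g(\phi_t(\cdot,y))$ tend to the common limit $\int g\,\dif\rho$. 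Hence the nonnegative random variable $g(\phi_t(\cdot,y))-g(\phi_t(\cdot,x))$ has expectation tending to $0$, so it converges to $0$ in $L^1(\Omega)$, and therefore in probability.

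It then remains to pass from the convergence of the $g$-increments back to \eqref{eq:mixing} itself. Here I would use that strong mixing makes the family of laws $\{\mathcal{L}(\phi_t(\cdot,x))\}_{t\ge 0}$ tight, since they converge weakly to $\rho$; thus for any $\eta>0$ there is a compact set $K$ carrying all but $\eta$ of the mass uniformly in $t$. On $K$ the inverse of the strictly monotone continuous $g$ is uniformly continuous, which upgrades $|g(\phi_t(\cdot,y))-g(\phi_t(\cdot,x))|\to 0$ in probability to $d(\phi_t(\cdot,y),\phi_t(\cdot,x))\to 0$ in probability. Finally, for general, possibly incomparable, $x,y$ I would reduce to the comparable case by inserting a common bound $z$ with $z\le x\wedge y$ and running the same estimate along $z\le x$ and $z\le y$, then combining via the triangle inequality.

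The main obstacle is this last transfer step. The inequality $g(\phi_t(\cdot,y))-g(\phi_t(\cdot,x))\to 0$ is cheap, but recovering genuine closeness of the states requires quantitative control that does not degenerate in regions where $g$ is nearly flat; the tightness supplied by strong mixing is precisely what rules this out, and obtaining the uniform-in-$t$ compact containment together with a clean treatment of incomparable initial points in the given ordered Polish space is where the care is needed.
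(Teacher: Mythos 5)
Your proof is correct and follows the same skeleton as the paper's: both directions are funneled through Theorem~\ref{thm:2.3}, with (ii)$\Rightarrow$(i) immediate and (i)$\Rightarrow$(ii) reduced to deriving \eqref{eq:mixing} from monotonicity plus strong mixing. The difference is that the paper disposes of that last step in one line by citing \cite[Proposition 2.4]{FGS17b}, whereas you reconstruct that proposition from scratch: nonnegativity of $g(\phi_t(\cdot,y))-g(\phi_t(\cdot,x))$ for a bounded increasing $g$ and ordered initial data, convergence of its expectation to zero because both marginals converge weakly to the same $\rho$, hence $L^1$ and in-probability convergence, and then the upgrade from $g$-increments to the metric via tightness and the strict monotonicity of $g$ on compact sets. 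This reconstruction is sound and is essentially the argument behind the cited proposition. The only points requiring care, which you flag yourself, are the reduction of incomparable pairs to comparable ones (existence of a common lower bound $z\le x$, $z\le y$) and the choice of a test function with a positive modulus of strict monotonicity on compacts (e.g. $g(u)=\sum_i\arctan(u_i)$ on $\R^d$); neither is automatic in an abstract ordered Polish space, but both hold in the settings the paper intends (state spaces of the type in \cite{Chueshov02}). In short, the citation buys brevity; your version buys self-containment, at the cost of having to pin down these order-structural hypotheses explicitly.
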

\begin{proof}
Since $\phi$ is strongly mixing, then for any $x\in E$,  $\phi_t(\cdot, x)$ weakly converges to the invariant measure $\rho$ as $t \rightarrow \infty$.
By \cite[Proposition 2.4]{FGS17b}, we have for all  $x,y\in E$,
\begin{equation*} 
\lim\limits_{t \to \infty}  d(\phi_t(\omega, x), \phi_t(\omega, y)) = 0 \ \text{ in probability}.
\end{equation*}
Thanks to Theorem~\ref{thm:2.3}, weak synchronization holds. \qed
\end{proof}

\begin{remark}
In \cite[Theorem 2.6]{FGS17b}, the authors derive a sufficient condition for weak synchronization when $\phi$  may not be a white noise RDS. Corollary~\ref{cor:2.2} can be seen as a weak version of \cite[Theorem 2.6]{FGS17b}. 

A white noise $\phi$ is strongly mixing, then $\phi$ is completely mixing, i,e., 
for every $x,y\in E$, every bounded Lipschitz function $f$,
$$
 \lim_{t\rightarrow \infty} |P_tf(x)-P_tf(y)|=0.
$$
This, together with the monotonicity, implies  
\begin{equation*} 
\lim\limits_{t \to \infty}  d(\phi_t(\omega, x), \phi_t(\omega, y)) = 0 \ \text{ in probability}.
\end{equation*}
An interesting question arises regarding the additional conditions, aside from monotonicity, that can facilitate the conversion of weak convergence into strong convergence.
\end{remark}

%%%%%%%%%%%%%%%%%%%%%%%%%%%%%%%%%%%%%%%%%%%%%%%%%%%%%%

\section{\bf Synchronization for SDEs driven by white noise}  \label{Syn SDE}

%%%%%%%%%%%%%%%%%%%%%%%%%%%%%%%%%%%%%%%%%%%%%%%%%%%%%%

%%%%%%%%%%%%%%%%%%%%%%%%%%%%%%%%%%%%%%%%%%%%%%%%%%%%%%

\subsection{\bf Local asymptotic stability}  \label{S31}

%%%%%%%%%%%%%%%%%%%%%%%%%%%%%%%%%%%%%%%%%%%%%%%%%%%%%% 

Recall a result from \cite[Lemma 3.1]{FGS17} on the Lyapunov exponent, which implies local asymptotic stability.
\begin{prop}\label{prop:3.1} Let $\phi_t(\omega,\cdot) \in C_{\mathrm{loc}}^{1,\delta} $ for some  $\delta \in (0,1) $  and all  $ t \geq 0 $ and let $ P_t$ be the Markovian semigroup associated to $\phi$. Assume that $P$ has an ergodic invariant measure $\rho$ such that
\begin{equation}\label{eq:IC}
\begin{aligned}
&\EX\int\log^+ \|D \phi_1(w,x)\| \, \dif \rho(x) < \infty\\
&\EX\int\log^+ \|\phi_1(\omega, \cdot + x) -\phi_ 1(\omega,x)\|_{\mathcal{C}^{1,\delta}(\bar{B}(0,1))} \, \dif \rho(x) < \infty. 
\end{aligned}    
\end{equation}
Then\\v
\rm{(i)} there are constants $\lambda_N < ... < \lambda_1$ such that
\[
\lim_{m \to \infty} \frac{1}{m} \log |D \phi_m(\omega,x) v| \in \{\lambda_i\}_{i=1}^N,
\]
 for all $v \in \mathbb{R}^d \setminus \{0\}$  and almost all $(\omega,x) \in \Omega \times \mathbb{R}^d.$\\
\rm{(ii)} Assume that the top Lyapunov exponent $\lambda_{\mathrm{top}} := \lambda_1< 0$. Then for every $\varepsilon\in (\lambda_{\mathrm{top}},0)$,  there is a measurable map $\beta: \Omega \times \mathbb{R}^d \to \mathbb{R}_+\setminus\{0\}$ such that for $\rho$-almost all $x \in \mathbb{R}^d$,
\[
\mathcal{S}(\omega,x) := \{ y \in \mathbb{R}^d : | \phi_m(\omega,y) - \phi_m(\omega,x) | \leq \beta(\omega,x) e^{\varepsilon m} \text{ for all } m \in \mathbb{N} \}
\]
is an open neighborhood of $x$, $\PX$-almost surely.
\end{prop}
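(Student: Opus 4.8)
The plan is to read part (i) as a direct application of Oseledets' multiplicative ergodic theorem (MET) to the linearised cocycle, and part (ii) as a local Lyapunov--Perron (stable set) argument exploiting that \emph{all} Lyapunov exponents are negative. For (i), I would first introduce the skew product $\Theta_m(\omega,x)=(\theta_m\omega,\phi_m(\omega,x))$ on $\Omega\times\R^d$. The chain rule together with the cocycle property of $\phi$ gives the linear cocycle identity $D\phi_{m+n}(\omega,x)=D\phi_m(\Theta_n(\omega,x))\,D\phi_n(\omega,x)$, so $A(m,\cdot):=D\phi_m$ is a matrix cocycle over $\Theta$. Since $\rho$ is an ergodic invariant measure for the Markov semigroup $P_t$ associated to the white-noise RDS $\phi$, there is a corresponding $\Theta$-invariant measure $\mu$ on $\Omega\times\R^d$ with $\Omega$-marginal $\PX$ and $\R^d$-marginal $\rho$, and $\mu$ is ergodic because $\rho$ is. Using the independence of the past and the future under $\PX$, the $\mu$-integral of $\log^+\|D\phi_1\|$ coincides with $\EX\int\log^+\|D\phi_1(\omega,x)\|\,\dif\rho(x)$, so the first line of \eqref{eq:IC} is exactly the integrability hypothesis required by the (one-sided) MET. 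Applying Oseledets' theorem then produces the finitely many distinct exponents $\lambda_N<\dots<\lambda_1$ and the associated Oseledets filtration, and yields the convergence of $\frac1m\log|D\phi_m(\omega,x)v|$ to one of the $\lambda_i$ for every $v\neq0$ and $\mu$-a.e. $(\omega,x)$.

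For (ii), fix $\varepsilon\in(\lambda_{\mathrm{top}},0)$. Since $\lambda_1=\lambda_{\mathrm{top}}<0$, part (i) gives $\limsup_m\frac1m\log\|D\phi_m(\omega,x)\|\le\lambda_{\mathrm{top}}$, so the tempered random variable $C(\omega,x):=\sup_{m\ge0}\|D\phi_m(\omega,x)\|e^{-\varepsilon m}$ is finite a.s. Writing $v_m:=\phi_m(\omega,y)-\phi_m(\omega,x)$ and $p_m:=\phi_m(\omega,x)$, the one-step recursion and the cocycle property give
$$v_{m+1}=D\phi_1(\theta_m\omega,p_m)\,v_m+R_m,$$
where the $C^{1,\delta}$ control of the derivative bounds the remainder by $|R_m|\le K_m|v_m|^{1+\delta}$, with $K_m$ the Hölder seminorm of $D\phi_1(\theta_m\omega,\cdot)$ near $p_m$. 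Using past/future independence again, the second line of \eqref{eq:IC} matches the $\mu$-integral of $\log^+$ of these translated $C^{1,\delta}$-norms, so Birkhoff's theorem (together with the standard subexponential-growth estimate for integrable stationary sequences) yields $\frac1m\log K_m\to0$, i.e. $K_m$ grows subexponentially. The core step is then a bootstrap: assuming inductively $|v_k|\le\beta e^{\varepsilon k}$ for $k\le m$ with $\beta$ small, the linear part contracts by $C(\omega,x)e^{\varepsilon}$ while the remainder contributes $K_m\beta^{1+\delta}e^{(1+\delta)\varepsilon m}$, which is negligible because the factor $e^{\delta\varepsilon m}$ beats the subexponential $K_m$. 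Choosing $\beta(\omega,x)$ as a suitable tempered positive random radius closes the induction and shows $|\phi_m(\omega,y)-\phi_m(\omega,x)|\le\beta(\omega,x)e^{\varepsilon m}$ for all $m$ whenever $|y-x|$ is below a tempered positive threshold; hence $\mathcal{S}(\omega,x)$ contains an open ball about $x$.

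The hard part will be the bootstrap in (ii): both the contraction constant $C(\omega,x)$ and the Hölder constants $K_m$ are only \emph{tempered} (subexponential), never uniformly bounded, so the random radius $\beta(\omega,x)$ must be chosen so carefully that the accumulated nonlinear errors, controlled through $\sum_m K_m|v_m|^{1+\delta}e^{-\varepsilon(m+1)}$, remain summable and do not overwhelm the exponential decay supplied by $\lambda_{\mathrm{top}}<0$. Ensuring the measurability of $\beta$ in $(\omega,x)$ and the uniformity of the smallness threshold along the whole segment from $x$ to $y$ (again via the $C^{1,\delta}$ bound, to guarantee the derivative stays close to $D\phi_m(\omega,x)$ over the segment) are the two remaining technical points that require care.
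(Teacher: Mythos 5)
The paper gives no proof of this proposition: it is recalled verbatim from \cite[Lemma 3.1]{FGS17}, so there is nothing internal to compare against. Your outline is the standard argument behind that lemma --- part (i) is Oseledets' multiplicative ergodic theorem for the derivative cocycle over the skew product $(\theta_m\omega,\phi_m(\omega,x))$ equipped with the ergodic Markov measure built from $\rho$, with the first line of \eqref{eq:IC} supplying the integrability hypothesis, and part (ii) is the local stable-set (Pesin/Ruelle) argument in the case where \emph{all} exponents are negative, with the second line of \eqref{eq:IC} controlling the nonlinear remainder --- so in substance you have reproduced the intended proof.

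One step as written would fail and needs repair: in the bootstrap for (ii) you assert that ``the linear part contracts by $C(\omega,x)e^{\varepsilon}$'' at each single step. The quantity $C(\omega,x)=\sup_m\|D\phi_m(\omega,x)\|e^{-\varepsilon m}$ controls only the $m$-step products; the one-step derivative $D\phi_1(\theta_m\omega,p_m)$ along the orbit need not have norm below $1$, and its norm is not even directly bounded by $C(\omega,x)$ since it is evaluated at the moving base point $(\theta_m\omega,p_m)$ rather than at $(\omega,x)$. The standard fix, which you should make explicit, is to introduce a tempered adapted (Lyapunov) norm $\|v\|'_{\omega,x}=\sum_{m\ge0}\|D\phi_m(\omega,x)v\|e^{-\varepsilon m}$ (or to run the induction in blocks of a fixed length $n_0$ chosen so that $\|D\phi_{n_0}\|e^{-\varepsilon n_0}<1$ on a set of large measure and use temperedness to absorb the excursions); in that norm the linearization is a genuine one-step contraction, the ratio between $\|\cdot\|'$ and $|\cdot|$ is tempered, and the rest of your estimate --- subexponential growth of the H\"older constants $K_m$ via Birkhoff and the second integrability condition, plus the extra factor $e^{\delta\varepsilon m}$ from the remainder --- closes the induction and yields the tempered radius $\beta(\omega,x)$. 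With that correction your argument is complete and coincides with the proof in \cite{FGS17}.
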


A vector field $b: \mathbb{R}^d \to \mathbb{R}^d$ is said to be eventually strictly monotone if there exists an $R > 0$ such that
\begin{equation}\label{eq:drift}
(b(x) - b(y), x - y) \leq 
\begin{cases}
\eta |x - y|^2, &\quad \text{for all } |x|+|y| < R\\
-\lambda |x - y|^2, &\quad \text{for all } |x|+|y| \geq R
\end{cases}
\end{equation}
for some $\eta,\lambda > 0$.

The above condition is a slight extension of eventually strictly monotone in~\cite{FGS17}. We still call it eventually strictly monotone.  This condition, together with the Young inequality, implies 
$$(b(x), x)\leq -\frac{\lambda}{2}|x|^2+C.$$ 

\begin{example} \label{examp-3.1}
Let $b \in C^{1,\delta}_{\text{loc}}$ for some $\delta \in (0, 1)$ be eventually strictly monotone and $(L_t)_{t\geq 0}$ be a rotationally invariant $\alpha$-stable L\'evy process~\cite{Sato}. Consider the SDE
\[
    \mathrm{d}X_t = b(X_t) \mathrm{d}t + \sigma \mathrm{d}L_t \quad \text{on } \mathbb{R}^d
\]
with $\sigma > 0$. If $\sigma$ is large enough, then $\lambda_{\text{top}} < 0$.
\end{example}
\begin{proof}
There is a corresponding white noise RDS $\varphi$ with $\varphi_t(\omega, \cdot) \in C_{loc}^{1,\delta}$ and $D\varphi_t(\omega, x)$ satisfies the equation
\[
\frac{d}{dt}D\varphi_t(\omega, x) = Db\left(\varphi_t(\omega, x)\right)D\varphi_t(\omega, x), \quad D\varphi_0(\omega, x) = \text{Id}.
\]
Define
\[
\lambda^{+}(x) := \max_{|r|=1} (Db(x)r, r).
\]
We have
\[
\lambda_\mathrm{top} \leq \liminf_{n \to \infty} \frac{1}{n} \int_{0}^{n} \lambda^{+}(\varphi_s(\omega, x))
\dif s.
\]
Since $b$ satisfies \eqref{eq:drift}, we have that $\lambda^{+}(x) \leq C$ for all $x \in \mathbb{R}^d$ and some constant $C > 0$. Ergodicity and monotone convergence then yield
\[
\lambda_\mathrm{top} \leq \int_{\mathbb{R}^d} \lambda^{+}(x) \dif \rho(x). 
\]
Using \eqref{eq:drift} again, we have
\begin{align}\label{eq:Lya}
\int_{\mathbb{R}^d} \lambda^{+}(x)\dif \rho(x)
&= \int_{B_R} \lambda^{+}(x) \dif \rho(x) + \int_{B_R^c} \lambda^{+}(x) \dif \rho(x)\nonumber \\
&\leq \|Db\|_{C^0(B_R)}\rho(B_R) - \lambda \rho(B_R^c).
\end{align}
Next, we will prove that % for $\sigma >> 1$ the invariant measure $\rho$ ``flattens'', i.e. 
for each $R > 0$, $\rho(B_{R}) \to 0$ for $\sigma \to \infty$. Thus, the right-hand side in \eqref{eq:Lya} becomes negative for $\sigma$ large enough, which finishes the proof.

For every $\alpha \in (0,2)$, we denote by $\mathcal{L}^{(\alpha)}$ the
Kolmogorov operator.
Then, we have for every $f\in C_b^2$ that
\begin{align}\label{Lf-alpha}
\mathcal{L}^{(\alpha)} f(x) =&  b(x)\cdot\nabla f(x) +
\int_{\mathbb{R}^d}(f(x+ \sigma z) - f(x) - I_{|z|\leq 1}\sigma z\cdot \nabla f(x)) \nu^{(\alpha)} (\dif z) \nonumber\\
=&  b(x)\cdot\nabla f(x) +|\sigma|^\alpha \Delta^{\frac{\alpha}{2}}f(x).
\end{align}
Where 
$$
\Delta^{\frac{\alpha}{2}} f(\cdot) =-(-\Delta)^{\frac{\alpha}{2}} f(\cdot) =\int_{\mathbb{R}^d}(f(\cdot+ z) - f(\cdot) - I_{|z|\leq 1} ~z\cdot \nabla f(\cdot)) \nu^{(\alpha)} (\dif z) 
$$
and
$$
\nu^{(\alpha)} (\dif z) := C(d,\alpha)\dif z/|z|^{d+\alpha}, \quad C(d,\alpha):= \frac{\alpha\Gamma(\frac{d+\alpha}{2})
}{2^{2-\alpha}\pi^{\frac{d}{2}}\Gamma(\frac{2-\alpha}{2})}.
$$
Due to the invariance, we have
\begin{equation} \label{inv-cond}
\int  \mathcal{L}^{(\alpha)} f(x) \rho(\dif x)=0.
\end{equation}
Thus%, for every $f\in C_b^2$
\begin{equation}\label{eq:Lya1}
\int (-\Delta)^{\frac{\alpha}{2}} f(x) \rho (\dif x)
\leq |\sigma|^{-\alpha}  \int  |b(x)\cdot\nabla f(x)|  \rho(\dif x) 
\leq  |\sigma|^{-\alpha}  |b\cdot\nabla f|_{C^0} \rightarrow 0 \ \  \rm{as} \ \ \sigma \rightarrow \infty.
\end{equation}
 Let $p^{(\alpha)}(t,x)$ be the smooth probability density of the $d$-dimensional rotationally invariant $\alpha$-stable process with $\alpha \in (0,2]$.  In particular,
\begin{equation}\label{eq:3101}
p^{(\alpha)}(t,x) = t^{-d/\alpha}p^{(\alpha)}(1,t^{-1/\alpha}x),\ \  t>0, x \in \R^d.
\end{equation}
By \cite[Eqs.(2.8),(2.11)]{CZ16},  for each $\alpha\in (0,2)$, there are  constants $c_i=c_i(\alpha, d), i=1,2,$ such that
\begin{equation}\label{eq:3102}
  \frac{c_1t}{(t^{1/\alpha}+|x|)^{d+\alpha}} \leq p^{(\alpha)}(t,x)\leq \frac{c_2t}{(t^{1/\alpha}+|x|)^{d+\alpha}}.
\end{equation}
Moreover,  there is a constant $c_3=c_3(\alpha, d)$ such that
\begin{equation}\label{eq:3103}
|\nabla p^{(\alpha)}(t,x)|\leq  \frac{c_3t}{(t^{1/\alpha}+|x|)^{d+\alpha+1}}. 
\end{equation}
Let $u=t^{-1/\alpha}x$.  Using~\eqref{eq:3101}, we have
\begin{equation}\label{eq:3104}
(-\Delta)^{\frac{\alpha}{2}}p^{(\alpha)}(t,x)=- \frac{\partial p^{(\alpha)}(t,x)}{\partial t}=\frac{1}{\alpha} t^{-\frac{d}{\alpha} - 1} \left[ d  p^{(\alpha)}(1,u) + u \cdot \nabla p^{(\alpha)}(1,u) \right].
\end{equation}
By~\eqref{eq:3102},~\eqref{eq:3103} and~\eqref{eq:3104}, there is a  constant $\varepsilon=\varepsilon(\alpha,d)\in(0,1 \wedge \frac{c_1 d}{c_3})$ such that  for all $|u|=|t^{-1/\alpha}x|\leq \varepsilon$,
$$
d  p^{(\alpha)}(1,u) + u \cdot \nabla p^{(\alpha)}(1,u) \geq \frac{d}{2}  p^{(\alpha)}(1,u) \geq \frac{c_1 d}{2(1+\varepsilon)^{d+\alpha}},
$$
and for all $t\geq (|x|/\varepsilon)^\alpha$,
\begin{equation}\label{eq:3105}
(-\Delta)^{\frac{\alpha}{2}}p^{(\alpha)}(t,x) \geq \frac{d}{2\alpha} t^{-\frac{d}{\alpha} - 1} \frac{c_1}{(1+\varepsilon)^{d+\alpha}}.
\end{equation}
Using~\eqref{eq:3102} and ~\eqref{eq:3103}, we have for all $|u|=|t^{-1/\alpha}x|\leq \frac{c_1 d}{c_3}$,
\begin{equation}\label{eq:3106}
d  p^{(\alpha)}(1,u) + u \cdot \nabla p^{(\alpha)}(1,u) > 0.
\end{equation}
For every $R>0$, choosing $f_\varepsilon(x)=(R/\varepsilon)^{d} p^{(\alpha)}((R/\varepsilon)^\alpha,x)$ and using~\eqref{eq:3105}, we have
\begin{align*}
\rho(B_R)\leq  
 \frac{2\alpha}{c_1 d} (R/\varepsilon)^{\alpha} (1+\varepsilon)^{d+\alpha}
 \int_{B_R} (-\Delta)^{\frac{\alpha}{2}} f_\varepsilon(x) \rho (\dif x).%\\
\end{align*} 
By \eqref{eq:3106}, we get
$$
\rho(B_R)\leq \frac{2\alpha}{c_1 d} (R/\varepsilon)^{\alpha} (1+\varepsilon)^{d+\alpha}
 \int_{B_{R_\varepsilon}} (-\Delta)^{\frac{\alpha}{2}} f_\varepsilon(x) \rho (\dif x),
$$
where $R_\varepsilon= \frac{c_1 d}{c_3} \frac{R}{\varepsilon} >R$.
Thank to \eqref{eq:Lya1}, we get 
\begin{align*}
\rho(B_R)
\leq \frac{2\alpha}{c_1 d} (R/\varepsilon)^{\alpha } (1+\varepsilon)^{d+\alpha}\left[|\sigma|^{-\alpha}  |b\cdot\nabla f_\varepsilon|_{C^0}+\left|\int_{B^c_{R_\varepsilon}} (-\Delta)^{\frac{\alpha}{2}} f_\varepsilon(x) \rho (\dif x)\right| \right]. 
\end{align*} 
Using~\eqref{eq:3104} again, we obtain
$$
\frac{2\alpha}{c_1 d} (R/\varepsilon)^{\alpha} (1+\varepsilon)^{d+\alpha}|(-\Delta)^{\frac{\alpha}{2}} f_\varepsilon(x)| \leq c_4
$$
for some constant $c_4$ which is independent of the choice of $\varepsilon\in(0,1) $. Consequently,
\begin{align*}
\rho(B_R) \leq 
& \frac{2\alpha}{c_1 d} (R/\varepsilon)^{\alpha} (1+\varepsilon)^{d+\alpha}{|\sigma|^{-\alpha}  |b\cdot\nabla f_\varepsilon|_{C^0}}   + c_4 \rho(B^c_{{R_\varepsilon} }).
\end{align*} 
Notice that $R_\varepsilon \rightarrow \infty$ as $\varepsilon \rightarrow 0$.
Letting first $\sigma \rightarrow \infty$ and then $\varepsilon \rightarrow 0$, 
we obtain that $\rho(B_R) \rightarrow 0$.
This completes the proof.
\qed
\end{proof}

In general, estimating the top Lyapunov exponent is an inherently arduous task in the study of chaotic dynamical systems. The following result concerns the sum of Lyapunov exponents. 
\begin{example} \label{examp-3.2}
Consider the SDE in $\R^d$
\[
    \dif X_t = b(X_t) \dif t + \sigma \dif L_t  
\]
Let  $b\in C^{1,\delta}_{\text{loc}}$ for some $\delta \in (0, 1)$
and $(L_t)_{t\geq 0}$ be a rotationally invariant $\alpha$-stable L\'evy process.  There is a corresponding white noise RDS $\varphi$ with $\varphi_t(\omega, \cdot) \in C_{loc}^{1,\delta}$, and $D\varphi_t(\omega, x)$ satisfies the equation
\[
\frac{d}{dt}D\varphi_t(\omega, x) = Db\left(\varphi_t(\omega, x)\right)D\varphi_t(\omega, x), \quad D\varphi_0(\omega, x) = \text{Id},
\]
and define a linear cycle $\psi$.
 Assume that $\phi$ admits a unique invariant measure $\rho(\dif x)=p(x)\dif x$ with strictly positive and smooth density $p$, and the linear cycle $\psi$ satisfies the integrability conditions~\eqref{eq:IC}. 
 Then the term (which is related to the sum of Lyapunov exponents)
 $$
 \lim_{t\rightarrow \infty} \frac{1}{t}\int_0^t {\rm trace}(Db\left(\varphi_s(\omega, \cdot)\right)
 )\dif s <0.
 $$
In particular, in the one-dimensional case, the top Lyapunov exponent is negative.
\end{example}

\begin{proof} 
The linear cycle $\psi$ satisfies the integrability conditions. According to the multiplicative ergodic theorem, the Lyapunov exponent associated with the invariant measure $\mu = \delta_{\alpha(\omega)}$ exists as a limit. Further, it can be computed according to the Birkhoff-Chintchin ergodic theorem. That is
\[
\lim_{t\rightarrow \infty} \frac{1}{t}\int_0^t {\rm trace}(Db\left(\varphi_s(\omega, \cdot)\right)
 )\dif s =  \int_{\R^d} \nabla\cdot b(x) \rho(\dif x) = \int_{\R^d} \nabla\cdot b(x) p(x) \dif x,
\]
where  $p(x)$ is the density of the Markov invariant measure. Let $f = \ln p$ based on the strictly positive property of $p$.
By It\^o's formula, we get
\begin{align*}
\ln p(X_t) - \ln p(X_0) &= \int_0^t \frac{b(X_{s-}) \cdot\nabla p(X_s-)}{p(X_s-)}  \dif s - \int_0^t |\sigma| ^\alpha  (-\Delta)^{\frac{\alpha}{2}}\ln p(X_s) \dif s +M_t. 
\end{align*}
where $M_t$ is a martingale. 
Let $X_t$ be the stationary solution, i.e., the solution with initial data distributed according to the invariant measure. Then, taking the expectation on both sides, we obtain that
\[
\int_{\mathbb{R}^d} \frac{b \cdot \nabla p}{p}   \rho(\dif x) - \int_{\R^d} |\sigma| ^\alpha (-\Delta)^{\frac{\alpha}{2}} \ln p(x) \rho(\dif x)=0,
\]
i.e.
\begin{equation} \label{eq:FK1}  
 \int_{\mathbb{R}^d} b \cdot \nabla p  \dif x - \int_{\R^d} |\sigma| ^\alpha (-\Delta)^{\frac{\alpha}{2}}\ln p(x) ~ p(x)\dif x =0.
\end{equation}
On the other hand, by the stationary Fokker-Planck equation, we can also get
\[
-\int_{\mathbb{R}^d} \nabla\cdot (b p) \dif x - \int_{\R^d} |\sigma| ^\alpha (-\Delta)^{\frac{\alpha}{2}} p(x) \dif x= 0.
\]
That is
\begin{equation} \label{eq:FK2}  
-\int_{\mathbb{R}^d} p \nabla\cdot b \dif x -\int_{\mathbb{R}^d} b \cdot \nabla p \dif x - \int_{\R^d} |\sigma| ^\alpha (-\Delta)^{\frac{\alpha}{2}} p(x) \dif x= 0.
\end{equation}
Using \eqref{eq:FK1} and \eqref{eq:FK2}, we have
\begin{align*}
 \lim_{t\rightarrow \infty} \frac{1}{t}\int_0^t {\rm trace}(Db\left(\varphi_s(\omega, \cdot)\right)
 )\dif s &= \int_{\mathbb{R}^d}  p~\nabla\cdot b \dif x \\
&= - |\sigma| ^\alpha\int_{\mathbb{R}^d} (-\Delta)^{\frac{\alpha}{2}} \ln p \cdot p \dif x - |\sigma| ^\alpha\int_{\mathbb{R}^d} (-\Delta)^{\frac{\alpha}{2}} p \dif x \\
&= - |\sigma| ^\alpha\int_{\mathbb{R}^d} (-\Delta)^{\frac{\alpha}{4}} \ln p \cdot (-\Delta)^{\frac{\alpha}{4}} p \dif x \\
&< 0.
\end{align*}
The last step involves noticing the monotonicity of the logarithmic function and the definition of $(-\Delta)^{\frac{\alpha}{4}}$. 
\qed
\end{proof}

\begin{remark}
Let  $b=-\nabla V$ with $V\in C^{2,\delta}_{\text{loc}}$ and $L$ be the Brownian motion. Assume that $e^{-\frac{2V(\cdot)}{\sigma^2}} \in L^1(\R^d)$, the unique invariant measure is given by 
$$
\rho(\dif x)=p(x)\dif x=\frac{1} {C_\sigma} e^{-\frac{2V(x)}{\sigma^2}} \dif x,
$$
where $C_\sigma=\int e^{-\frac{2V(x)}{\sigma^2}} \dif x$.  The integrability conditions are detectable.
\end{remark}

\begin{remark} 
{ The integrability condition often fails for SDEs driven by an arbitrary L\'evy noise.  The multiplicative ergodic theorem is not applicable in this case.  In particular, an SDE driven by an arbitrary L\'evy process (beyond a stable process) may not admit a transition probability density.  
On the other hand, it seems that a negative top Lyapunov exponent is not necessary for synchronization.} 
\end{remark}

The following result demonstrates local asymptotic stability without the presence of a negative top Lyapunov exponent.

\begin{example}\label{exam:3.3}  Consider an SDE
   $$ \dif X_t= (X_t-X_t^3)  \dif t+ \sigma \dif S_t,\  \sigma \neq 0$$
where $S_t$ is a subordinator, i.e. a one-dimensional non-decreasing L\'evy process~\cite{Applebaum}. 
Let $\phi_t(\omega,x):=X_t(x)$. Then $\phi$ is asymptotically stable on $B(1,\frac{1}{4})$ $(B(-1,\frac{1}{4}))$ for $\sigma>0$ $(\sigma<0)$.
\end{example}

\begin{proof} 
If $\sigma>0$,  we show that $\phi$ is asymptotically stable on $B(1,\varepsilon)$ with $0<\varepsilon\leq \frac{1}{4}$.
Let $\psi_t(x)$ be the solution map of $\dot{u}=u-u^3,u_0=x$. We have for any $x\in \R$, $\psi_t(x)\leq\phi_t(\omega,x)$.  For all $x,y\in B(1,\varepsilon)$, we observe that $\phi_t(\omega,x),\phi_t(\omega,y)>0$ and 
\begin{align*}
 \ddt(\phi_t(\omega,x)-\phi_t(\omega,y))^2
 &=2(\phi_t(\omega,x)-\phi_t(\omega,y))^2(1-\phi_t^2(\omega,x)-\phi_t(\omega,x)\phi_t(\omega,y)-\phi_t^2(\omega,y)) \\  
&\leq 2(\phi_t(\omega,x)-\phi_t(\omega,y))^2(1-\psi_t^2(x)-\psi_t(x)\psi_t(y)-\psi_t^2(y)). 
\end{align*}
Without loss of generality, we assume that $x <y$ and get
\begin{align*}
\ddt(\phi_t(\omega,x)-\phi_t(\omega,y))^2
&\leq 2(\phi_t(\omega,x)-\phi_t(\omega,y))^2(1-3\psi_t^2(x))\\
&\leq 2(\phi_t(\omega,x)-\phi_t(\omega,y))^2(1-3\psi_t^2(1-\varepsilon))\\
&\leq 2(\phi_t(\omega,x)-\phi_t(\omega,y))^2(1-3 (1-\varepsilon)^2),
\end{align*}
with $1-3 (1-\varepsilon)^2<0$.
Due to monotonicity, we have
$$
{\rm{diam}}(\phi_{t}(\cdot, B(1,\varepsilon)))=\phi_t(\omega,1+\varepsilon)-\phi_t(\omega,1-\varepsilon).
$$
Consequently
$$
\PX \left( \lim_{t\rightarrow \infty }{\rm{diam}}(\phi_{t}(\cdot, B(1,\varepsilon))) =0\right) =\PX \left( \lim_{t\rightarrow \infty }|\phi_t(\omega,1+\varepsilon)-\phi_t(\omega,1-\varepsilon)| =0\right)=1>0.
$$
That is $\phi$ is asymptotically stable on $B(1,\varepsilon)$.
If $\sigma<0$, a similar argument shows that $\phi$ is asymptotically stable on $B(-1,\varepsilon)$. % for small enough $\varepsilon>0$.
\qed
\end{proof}

%%%%%%%%%%%%%%%%%%%%%%%%%%%%%%%%%%%%%%%%%%%%%%%%%%%%%%

\subsection{\bf Global and local swift transitivity}  \label{S32}

%%%%%%%%%%%%%%%%%%%%%%%%%%%%%%%%%%%%%%%%%%%%%%%%%%%%%% 

Consider the following SDE
\begin{equation}\label{eq:3.2.1}
\mathrm{d}X_t =  b(X_t) \dif t+ \sigma \dif L_t,
\end{equation}
where $(L_t)_{t \geq 0}$ is a L\'evy process on $\mathbb{R}^d$ and $\sigma \neq 0$.  Assume that the drift term $b$ is regular enough, and there is a corresponding RDS $\phi$.  Thanks to the smooth effect of white Gaussian noise,  the global swift transitivity property holds if the noise is a Brownian motion (see \cite[Proposition 3.10]{FGS17}).  As mentioned before, this is not true for SDEs driven by general L\'evy noise or degenerate noise.  However, for a class of  L\'evy processes, we derive a result on the global swift transitivity property.

We say that a vector field $b: \mathbb{R}^d \to \mathbb{R}^d$ satisfies the one-sided Lipschitz condition if  
\[
(b(x) - b(y), x - y) \leq \lambda |x - y|^2 % \quad \text{for all} \ x, y \in \R^d  \text{and some } \lambda>0.
\]
for all $x,y\in \R^d$ and some $\lambda>0$.

\begin{lemma}\label{lem:3.2.1} 
Let $b$ be locally Lipschitz and satisfy the one-sided Lipschitz condition. 
Assume that for every $T>0$, $L$ has full support in $D([0,T],\R^d)$. 
Then for every $x,y\in E$ and $R>0$, there is a time $t_0>0$ such that  
$$
\PX \left( \phi_{t_0}(\cdot,B(x,R)) \subset B (y, 2 R) \right)>0.
$$
In particular, the global swift transitivity property holds.
\end{lemma}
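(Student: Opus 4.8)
The plan is to exploit the additive structure of the noise to recast the problem as a deterministic control problem, steer the centre of the ball from $x$ to $y$ in a \emph{short} time, and use the one-sided Lipschitz condition to keep the image ball from spreading out. For a driving path $w\in D([0,t_0],\R^d)$ and an initial point $z_0$, write $\Phi_t(z_0;w)$ for the solution of
$$
\Phi_t(z_0;w)=z_0+\int_0^t b(\Phi_s(z_0;w))\,\dif s+\sigma w_t,
$$
so that $\phi_t(\omega,z_0)=\Phi_t(z_0;L(\omega))$ and $\phi_{t_0}(\omega,\cdot)$ depends only on $L|_{[0,t_0]}$. First I would fix a small time $t_0>0$ with $e^{\la t_0}<2$, pick a smooth curve $\gamma$ with $\gamma(0)=x$, $\gamma(t_0)=y$ (say the straight segment traversed in time $t_0$), and define the continuous control $\ell_t:=\sigma^{-1}\big(\gamma(t)-x-\int_0^t b(\gamma(s))\,\dif s\big)$, which is tailored so that $\Phi_t(x;\ell)=\gamma(t)$ and in particular $\Phi_{t_0}(x;\ell)=y$. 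Steering in arbitrarily short time is harmless here precisely because $L$ has full support, so no lower bound on $t_0$ is forced.

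Next I would bound the diameter of the image. For any $z_0\in\overline{B(x,R)}$, subtracting the integral equations for $\Phi_t(z_0;\ell)$ and $\gamma(t)=\Phi_t(x;\ell)$ cancels the common term $\sigma\ell_t$, and the one-sided Lipschitz condition gives
$$
\ddt|\Phi_t(z_0;\ell)-\gamma(t)|^2=2\big(b(\Phi_t(z_0;\ell))-b(\gamma(t)),\,\Phi_t(z_0;\ell)-\gamma(t)\big)\le 2\la\,|\Phi_t(z_0;\ell)-\gamma(t)|^2.
$$
By Gronwall's inequality $|\Phi_{t_0}(z_0;\ell)-y|\le|z_0-x|\,e^{\la t_0}\le R\,e^{\la t_0}<2R$, so the deterministic flow maps $\overline{B(x,R)}$ into $B(y,2R)$ with a uniform margin $\delta_0:=2R-R\,e^{\la t_0}>0$.

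The remaining step transfers this to the stochastic flow with positive probability. A priori bounds from the one-sided Lipschitz condition (which yields linear growth control and hence no finite-time blow-up) confine all trajectories $\{\,\Phi_t(z_0;w):z_0\in\overline{B(x,R)},\ t\le t_0,\ \|w-\ell\|_\infty\le1\,\}$ to a fixed compact set $K$, on which $b$ is Lipschitz with some constant $L_b$; a single Gronwall estimate then yields $|\Phi_t(z_0;w)-\Phi_t(z_0;\ell)|\le|\sigma|\,\|w-\ell\|_\infty\,e^{L_b t_0}$, uniformly in $z_0$. Hence if $\|w-\ell\|_\infty<\delta:=\delta_0\,(2|\sigma|\,e^{L_b t_0})^{-1}$ then $\Phi_{t_0}(\cdot;w)(\overline{B(x,R)})\subset B(y,2R)$. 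Since $\ell$ is continuous, a sufficiently small Skorokhod ball $\mathcal{U}$ around $\ell$ is contained in this uniform neighbourhood; by the full support of $L$ in $D([0,t_0],\R^d)$ we have $\PX(L|_{[0,t_0]}\in\mathcal{U})>0$, and on this event $\phi_{t_0}(\omega,B(x,R))\subset B(y,2R)$, whence $\PX(\phi_{t_0}(\cdot,B(x,R))\subset B(y,2R))>0$.

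I expect the main obstacle to be the matching of topologies: the full-support hypothesis lives in the Skorokhod $J_1$ topology on $D([0,t_0],\R^d)$, whereas the solution map is continuous only in the uniform topology, and for jump noise these genuinely differ. What rescues the argument is that the control $\ell$ is \emph{continuous}, so Skorokhod-convergence to $\ell$ coincides with uniform convergence and every uniform neighbourhood of $\ell$ contains a Skorokhod-open set; the care needed is to make this comparison quantitative (through the modulus of continuity of $\ell$, controlling the time change) so that full support may be legitimately invoked. A secondary point worth stating cleanly is the interplay between the exponential spreading allowed by $\la>0$ and the travel time, which is resolved by the freedom to take $t_0$ as small as we like.
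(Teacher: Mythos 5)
Your proposal is correct and follows essentially the same route as the paper: construct an explicit control steering $x$ to $y$ in a short time $t_0$, use the one-sided Lipschitz condition plus Gronwall to keep the image of the ball from spreading by more than a factor $e^{\lambda t_0}<2$, and invoke the full support of $L$ to give the required event positive probability. The only structural difference is that the paper applies the Gronwall comparison to $\phi_t(\omega,x')$ and $\phi_t(\omega,x)$ under the \emph{actual random noise} --- so the spreading bound holds for every $\omega$ and only the single trajectory started at $x$ must track the control, requiring mere pointwise continuity of $\omega\mapsto\phi_{t_0}(\omega,x)$ --- whereas you compare under the deterministic control $\ell$ and therefore need the additional (correctly supplied) uniform-in-$z_0$ continuity of the solution map in the driving path via a compactness and local Lipschitz argument; both work, and your explicit handling of the $J_1$-versus-uniform topology issue around the continuous control is a point the paper leaves implicit.
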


\begin{proof}
Set $t_0 = \frac{1}{\lambda}\ln \frac{3}{2}$.  For $t \in [0, t_0]$ define
\[
\psi(t) := x + \frac{t}{t_0}(y - x)
\]
 and
\[
\omega^0(t) := \frac{1}{\sigma} \left( \psi(t) - x - \int_0^t b(\psi(s)) ds \right).
\]
 Then $\varphi_t(\omega^0, x) = \psi(t)$ for all $t \in [0, t_0]$. In particular, $\varphi_{t_0}(\omega^0, x) = y$. By the one-sided Lipschitz condition of the drift term, we have 
\begin{align*}
\ddt |\phi_t(\omega, x')-\phi_t(\omega, x)|^2  
&=  2 ( b(\phi_t(\omega, x')) - b(\phi_t(\omega, x)), \phi_t(\omega, x') - \phi_t(\omega, x) )\\
&\leq 2 \lambda |\phi_t(\omega, x') - \phi_t(\omega, x)|^2.
\end{align*}
For all $x' \in B(x, r)$, $\omega \in \Omega$, and $t \geq 0$. By the Gronwall inequality, it follows that
\[
|\varphi_t(\omega, x') - \varphi_t(\omega, x)| \leq |x' - x| e^{\lambda t} \leq r e^{\lambda t}. 
\]
Consequently
\begin{align*}
|\varphi_{t_0}(\omega, x') - y| 
&\leq |\varphi_{t_0}(\omega, x') - \varphi_{t_0}(\omega, x)| + |\varphi_{t_0}(\omega, x) - \varphi_{t_0}(\omega^0, x)|\\
&\leq \frac{3}{2} r + |\varphi_{t_0}(\omega, x) - \varphi_{t_0}(\omega^0, x)|.
\end{align*}
The map $\omega \mapsto \varphi_{t_0}(\omega, x)$ is continuous from $D_U([0, t_0]; \mathbb{R}^n)$ to $\mathbb{R}^d$. Then there exists some $\delta > 0$ such that
\begin{align*}
\PX(\varphi_{t_0}(\cdot, B(x, r)) \subset B(y, 2r)) 
&\geq \PX \left( |\varphi_{t_0}(\cdot, x) - \varphi_{t_0}(\omega^0, x)| \leq \frac{r}{2} \right)  \\
&\geq \PX \left( \sup_{s \in [0, t_0]} |\omega(s) - \omega^0(s)| \leq \delta \right) > 0.
\end{align*}
Here, in the last inequality, we used the full support property of L\'evy processes.
\qed\end{proof}

\begin{remark}
The global strong swift transitivity property does not hold for SDEs only with one-sided Lipschitz conditions. For example $\ddt X=X+\dot{W}.$   
{Following the proof of Lemma~\ref{lem:3.2.1}, we can not achieve the global strong swift transitivity property even with the one-sided Lipschitz condition replaced by eventual strict monotonicity~\eqref{eq:drift}. }

As pointed out in Remark~\ref{rem:2.4}, the global (strong) swift transitivity property does not hold for SDEs driven by general L\'evy noise or degenerate noise.  Consequently, we turn to verifying the local strong recurrence property under even weaker drift conditions. 
\end{remark}
Assume that   there exists some $z \in \mathbb{R}^d$ and $\lambda >0$  such that  for all $x \in\R^d$ 
\begin{equation}\label{eq:drift-weak}
(b(x) - b(z), x - z) \leq -\lambda |x - z|^2. 
\end{equation}
\begin{remark}
The condition \eqref{eq:drift-weak} is essentially weaker than \eqref{eq:drift}.
{ The drift term of the Lorentz 63 system (with the Rayleigh number $\rho<1$) satisfies the condition \eqref{eq:drift-weak}, but not \eqref{eq:drift}.} 
\end{remark}

\begin{lemma}\label{lem:3.2.2} 
Assume that $b$ is locally Lipschitz and satisfies \eqref{eq:drift-weak}. 
For every $T>0$, $L$ has full support in $D([0,T],\R^d)$.
Then $\phi$ is strongly recurrent at $z$.
\end{lemma}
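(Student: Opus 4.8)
The strategy mirrors the proof of Lemma~\ref{lem:3.2.1}, but exploits the stronger dissipativity~\eqref{eq:drift-weak} centered at $z$ to obtain a \emph{contraction toward} $z$ rather than mere transitivity. Recall that strong recurrence at $z$ requires, for every $R>0$, a time $t_0>0$ with $\PX(\phi_{t_0}(\cdot,B(z,R))\subset B(z,R/2))>0$. The plan is to first control the deterministic flow (zero-noise path) and show it drives $B(z,R)$ into a ball of radius strictly less than $R/2$ by some time $t_0$, then use a support/continuity argument to transfer this to a positive-probability event for the genuinely noisy flow.

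First I would estimate the spread of trajectories starting in $B(z,R)$ relative to the reference point $z$. Taking $x'\in B(z,R)$, I compute, under the zero-noise control $\omega^0\equiv 0$ (so that $\varphi_t(\omega^0,\cdot)$ is the deterministic flow $\psi_t$ of $\dot u=b(u)$),
\[
\ddt|\psi_t(x')-z|^2 = 2\big(b(\psi_t(x'))-b(z),\,\psi_t(x')-z\big)+2\big(b(z),\,\psi_t(x')-z\big).
\]
The point is that $z$ is \emph{not} assumed to be an equilibrium, so the term $b(z)$ need not vanish; however \eqref{eq:drift-weak} gives the key one-sided bound $2(b(\psi_t(x'))-b(z),\psi_t(x')-z)\le -2\lambda|\psi_t(x')-z|^2$. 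Combined with Young's inequality on the inhomogeneous term, this yields a differential inequality of the form $\ddt|\psi_t(x')-z|^2\le -\lambda|\psi_t(x')-z|^2 + C$ for a constant $C$ depending on $|b(z)|$ and $\lambda$. Gronwall then gives $|\psi_t(x')-z|^2\le e^{-\lambda t}|x'-z|^2 + \frac{C}{\lambda}(1-e^{-\lambda t})$. A cleaner route, if $b(z)=0$ (e.g. if $z$ is a deterministic equilibrium, which is the relevant case for Lorenz with $\rho<1$), is the pure contraction $|\psi_t(x')-z|\le e^{-\lambda t}|x'-z|\le e^{-\lambda t}R$; I would treat the homogeneous case as the main line and indicate the inhomogeneous modification. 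Choosing $t_0$ large enough that the right-hand bound is strictly below, say, $R/4$, we get $\psi_{t_0}(B(z,R))\subset B(z,R/4)$.

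Next I transfer this to the noisy system. Writing the noisy flow as a perturbation, I estimate $|\varphi_{t_0}(\omega,x')-z|\le|\varphi_{t_0}(\omega,x')-\psi_{t_0}(x')|+|\psi_{t_0}(x')-z|$, where the second term is already $\le R/4$. For the first term I use the one-sided Lipschitz/dissipativity estimate to bound the difference between the noisy trajectory and the deterministic one uniformly over $x'\in B(z,R)$, reducing it to the size of the driving path $\sup_{s\in[0,t_0]}|\omega(s)-\omega^0(s)|$. Because $\omega\mapsto\varphi_{t_0}(\omega,x)$ is continuous from $D([0,t_0];\R^d)$ to $\R^d$, there is a $\delta>0$ so that $\sup_{s\le t_0}|\omega(s)-\omega^0(s)|\le\delta$ forces the first term below $R/4$, hence $\varphi_{t_0}(\omega,x')\in B(z,R/2)$ for \emph{all} $x'\in B(z,R)$ simultaneously. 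Finally, the full-support hypothesis on $L$ in $D([0,t_0],\R^d)$ guarantees that the event $\{\sup_{s\le t_0}|\omega(s)-\omega^0(s)|\le\delta\}$ has positive probability, giving $\PX(\phi_{t_0}(\cdot,B(z,R))\subset B(z,R/2))>0$.

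The main obstacle is the uniformity over the whole ball $B(z,R)$ in the noise-comparison step: the continuity of $\omega\mapsto\varphi_{t_0}(\omega,x')$ must be quantified so that a \emph{single} $\delta$ works for every starting point $x'$. Here the dissipativity \eqref{eq:drift-weak} is exactly what saves us — it makes the map $x'\mapsto\varphi_t(\omega,x')$ a contraction toward $z$ (the flow does not expand distances to $z$), so the perturbation bound on $|\varphi_{t_0}(\omega,x')-\psi_{t_0}(x')|$ can be made uniform in $x'\in B(z,R)$ rather than blowing up with the diameter. If $b(z)\neq 0$ one must additionally check that $\frac{C}{\lambda}$ can be absorbed by taking $R$ into the radius budget or, more honestly, that the recurrence is stated for each fixed $R$ with $t_0$ allowed to depend on $R$; verifying that the contraction margin beats the inhomogeneous drift is the delicate bookkeeping point.
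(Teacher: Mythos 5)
There is a genuine gap, and you have in fact put your finger on it yourself without repairing it: your choice of reference control $\omega^0\equiv 0$ does not work when $b(z)\neq 0$. Condition \eqref{eq:drift-weak} does not assume $z$ is an equilibrium (e.g.\ $b(x)=-\lambda(x-z)+c$ with $c\neq 0$ satisfies it), and strong recurrence must hold for \emph{every} $R>0$. With $\omega^0\equiv 0$ your Gronwall bound only gives $|\psi_t(x')-z|^2\le e^{-\lambda t}|x'-z|^2+\frac{C}{\lambda}$ with $C$ controlled by $|b(z)|$, so the deterministic flow lands in a ball of radius comparable to $\sqrt{C/\lambda}$ around $z$, not in $B(z,R/4)$. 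For $R$ small this constant cannot be ``absorbed into the radius budget'': the uncontrolled flow started near $z$ drifts away from $z$ at speed $|b(z)|$, and no choice of $t_0$ makes $\psi_{t_0}(B(z,R))\subset B(z,R/4)$. So the main line of your argument only covers the special case $b(z)=0$, and the ``delicate bookkeeping point'' you defer is not bookkeeping — it is the case in which the argument, as structured, fails.

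The paper's proof fixes this by choosing a \emph{nonzero} reference control, $\omega^0(t):=\frac{t}{\sigma}\,b(z)$, which exactly cancels the drift at $z$ so that $\phi_t(\omega^0,z)=z$ for all $t\ge 0$. Then \eqref{eq:drift-weak} applied to the pair $\big(\phi_t(\omega^0,z),\phi_t(\omega^0,y)\big)$ gives the homogeneous differential inequality and a pure exponential contraction $|z-\phi_t(\omega^0,y)|\le Re^{-2\lambda t}$ with no residual constant; the full-support hypothesis on $L$ in $D([0,T],\R^d)$ is precisely what guarantees that this nonzero control path lies in the support of the noise, so the event $\{\sup_{s\le t_0}|\omega(s)-\omega^0(s)|\le\delta\}$ still has positive probability. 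The rest of your argument (perturbation bound plus continuity of $\omega\mapsto\phi_{t_0}(\omega,\cdot)$ from $D([0,t_0];\R^n)$ into $C(B(z,R);\R^d)$, which already gives the uniformity in $x'$ you were worried about) then goes through verbatim. In short: replace $\omega^0\equiv 0$ by $\omega^0(t)=\frac{t}{\sigma}b(z)$ and the inhomogeneous term disappears entirely.
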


\begin{proof}
For fixed $z$ and any $R>0$, define 
\[
\psi(t) := z,\  \omega^0(t) := \frac{t}{\sigma} b(z).
\]
Then $\phi_t(\omega^0, z) = z$ for all $t \geq 0$.  
We have 
\[
( b(z) - b(y), z - y ) \leq -\lambda|z - y|^2
\] for all $y \in \mathbb{R}^d$. This inequality and $\phi_t(\omega^0, z) = z$ imply
\begin{align*}
\frac{d}{dt} |\phi_t(\omega^0, z) - \phi_t(\omega^0, y)|^2 
&= 2 ( b(\phi_t(\omega^0, z)) - b(\phi_t(\omega^0, y)), \phi_t(\omega^0, z) -\phi_t(\omega^0, y) )\\ 
&\leq -2\lambda |\phi_t(\omega^0, z) - \phi_t(\omega^0, y)|^2.
\end{align*}
Using Gronwall's inequality, it follows that, for all $y \in B(z, R)$ and $t \geq 0$, 
\[
|z- \phi_t(\omega^0, y)| \leq |z - y| e^{-2\lambda t} \leq R e^{-2\lambda t}.
\]
Choose $t_0 \geq 0$ such that $e^{-2t_0} < \frac{1}{4}$. Then for all $y \in B(z, R)$ and $\omega \in \Omega$,
\begin{align*}
|z - \phi_{t_0}(\omega, y)| 
&\leq |z - \phi_{t_0}(\omega^0, y)| + |\phi_{t_0}(\omega^0, y) - \phi_{t_0}(\omega, y)| \\
&\leq \frac{R}{4} + |\phi_{t_0}(\omega^0, y) - \phi_{t_0}(\omega, y)|.
\end{align*}
The map $\omega \mapsto \phi_{t_0}(\omega, \cdot)$ is continuous from $D([0, t_0]; \mathbb{R}^n)$ to $C(B(z, R); \mathbb{R}^d)$. Then there exists some $\delta > 0$ such that
\begin{align*}
\PX \left( \phi_{t_0}(\cdot, B(z, R)) \subset B \left( z, \frac{R}{2} \right) \right) 
&\geq \PX \left( \sup_{y \in B(x, R)} |\phi_{t_0}(\omega^0, y) - \phi_{t_0}(\omega, y)| \leq \frac{R}{4} \right)\\
&\geq \PX \left( \sup_{s \in [0, t_0]} |\omega(s) - \omega^0(s)| \leq \delta \right) > 0.
\end{align*}
The proof is complete.
\qed\end{proof}

\begin{remark}\label{rem:3.5}
Although Lemma~\ref{lem:3.2.2} is established for non-degenerate noise, the proof can be applied to some SDEs with degenerate noise.   
\end{remark}

%%%%%%%%%%%%%%%%%%%%%%%%%%%%%%%%%%%%%%%%%%%%%%%%%%%%%%

\section{\bf Concrete examples }  \label{Sec:example}

%%%%%%%%%%%%%%%%%%%%%%%%%%%%%%%%%%%%%%%%%%%%%%%%%%%%%%
In this section, we consider several concrete examples, including an SDE forced by Poisson processes, an SDE with drift given by a multidimensional double-well potential with degenerate additive noise, and stochastic Lorenz systems driven by degenerate noise.  The global swift transitive fails for these examples.

%%%%%%%%%%%%%%%%%%%%%%%%%%%%%%%%%%%%%%%%%%%%%%%%%%%%%%

\subsection{\bf An SDE driven by a Poisson process  }  \label{S41}

%%%%%%%%%%%%%%%%%%%%%%%%%%%%%%%%%%%%%%%%%%%%%%%%%%%%%%

Consider an SDE driven by Poisson processes
\begin{equation}\label{eq:4-1}
\dif X_t= (X_t-X_t^3) \dif t+ \sigma \dif N_t,   \ \ \sigma>0.
\end{equation}
A Poisson process does not have full support in $D([0, T],\R)$. We can not apply Lemma~\ref{lem:3.2.2} to show the local strong recurrence property. 

\begin{theorem}
Synchronization holds for system~\eqref{eq:4-1}. 
\end{theorem}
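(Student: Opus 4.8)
The plan is to apply Corollary~\ref{cor:2.1}(ii) at the synchronizing point $z=1$, for which I must check two things: that $\phi$ is asymptotically stable on a ball $B(1,\varepsilon)$, and that $\phi$ is strongly swift transitive at $1$. The first ingredient is free: a Poisson process is a non-decreasing subordinator, so Example~\ref{exam:3.3} with $\sigma>0$ already yields asymptotic stability on $B(1,\varepsilon)$ for every $0<\varepsilon\le\frac14$. Everything therefore reduces to establishing strong swift transitivity at $1$, and the whole point is to obtain it \emph{without} full support of the noise, which is precisely why Lemma~\ref{lem:3.2.2} cannot be invoked here.

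First I would use monotonicity to reduce the transitivity statement to the control of two scalar endpoints. Since $\phi_t(\omega,\cdot)$ is the composition of the order-preserving flow $\psi$ of $\dot u=u-u^3$ with the order-preserving shifts $u\mapsto u+\sigma$ at the Poisson jump times, it is monotone increasing, whence $\phi_{t_0}(\omega,B(x,R))=[\,\phi_{t_0}(\omega,x-R),\,\phi_{t_0}(\omega,x+R)\,]$. Thus it suffices, given $x\in\R$ and $R>0$, to exhibit a single time $t_0$ and an event of positive probability on which both endpoints lie in $(1-\frac R2,1+\frac R2)$.

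The two structural facts I would exploit are that the jumps are strictly positive and that $[1,\infty)$ is forward invariant: the shift increases $u$, while for $u>1$ the flow decreases $u$ monotonically toward the stable fixed point $1$ without crossing it, and $\psi_t(v)\downarrow 1$ for every $v\ge1$. On the positive-probability Poisson event ``exactly $n$ jumps in a short window $(\delta-\epsilon,\delta)$ and no jumps on $[0,\delta-\epsilon]\cup[\delta,t_0]$'' I would run two phases. In Phase~1 the lower endpoint first flows from $x-R$ to $\psi_{\delta-\epsilon}(x-R)\ge m:=\min(x-R,-1)$, then receives $n$ jumps of size $\sigma$ inside the window; since the downward flow erosion over a window of length $\epsilon$ is $O(\epsilon)$ for fixed $n$, choosing $n>(1-m)/\sigma$ and $\epsilon$ small forces $\phi_\delta(\omega,x-R)\ge1$, and by monotonicity $\phi_\delta(\omega,x+R)\ge\phi_\delta(\omega,x-R)\ge1$ as well, both bounded by a deterministic constant $B_{\max}$. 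In Phase~2 there are no jumps, so the flow contracts $[1,\infty)$ toward $1$; choosing $t_0-\delta$ large enough that $\psi_{t_0-\delta}(B_{\max})<1+\frac R2$ places both endpoints in $[1,1+\frac R2)\subset B(1,\frac R2)$. This yields strong swift transitivity at $1$, and Corollary~\ref{cor:2.1}(ii) then gives synchronization.

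The hard part will be the Phase~1 estimate. Because the Poisson process has no full support, I cannot steer the system along an arbitrary target path as in the proof of Lemma~\ref{lem:3.2.2}; instead I must pump the lower endpoint into the invariant region $[1,\infty)$ using only finitely many sign-definite jumps, carefully quantifying the competing downward flow during the short pumping window against the gain $n\sigma$ from the jumps, and then the exponential contraction of $\dot u=u-u^3$ near $1$ during the long relaxation phase. It is exactly the positivity of the jumps together with the forward-invariance of $[1,\infty)$ that make this tailored control possible where the generic full-support construction fails.
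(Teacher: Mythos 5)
Your proposal is correct, and it follows the same broad strategy as the paper (asymptotic stability on $B(1,\tfrac14)$ from Example~\ref{exam:3.3}, a local transitivity property at $z=1$ manufactured from positive Poisson jumps plus monotonicity, then the criterion of Section~2), but the transitivity step is executed differently. The paper verifies only \emph{strong recurrence} at $1$ (condition (iii) of Theorem~\ref{thm:2.1}/Corollary~\ref{cor:2.1}): starting from $B(1,R)$, it uses a Poisson event with finitely many jumps and the crude expansion bound $\frac{d}{dt}(\phi_t(x)-\phi_t(y))^2\le 2(\phi_t(x)-\phi_t(y))^2$ over a time $t_1<\ln 2$ to land in $(1,1+4R)$, and then concatenates, via independence of $\mathcal{F}_{0,t_1}$ and $\mathcal{F}_{t_1,t_1+t_2}$ and $\theta$-invariance of $\PX$, with the almost-sure contraction of $(1,1+4R)$ from Example~\ref{exam:3.3} to reach $B(1,\tfrac R2)$. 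You instead prove the stronger \emph{strong swift transitivity} at $1$ (condition (ii)), handling an arbitrary starting ball $B(x,R)$, and you do it on a single explicit event: a burst of $n$ jumps in a window of length $\epsilon$ (with the flow erosion controlled by a bounded drift on a compact, forward-controlled set, so $O(\epsilon)$ for fixed $n$), followed by a long jump-free interval on which the purely deterministic relaxation $\psi_s(B_{\max})\downarrow 1$ on the forward-invariant set $[1,\infty)$ does the contracting. What your route buys is a more self-contained second phase (no appeal to the random a.s. contraction or the independence/concatenation step) at the cost of proving a stronger property than needed and of carrying the deterministic bound $B_{\max}$; the paper's route is shorter because recurrence only requires balls centered at $1$. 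One shared point worth making explicit in either write-up is the existence of the weak attractor $A$ presupposed by Theorem~\ref{thm:2.1} and Corollary~\ref{cor:2.1}; neither your argument nor the paper's proof addresses it, though it follows from the dissipativity of $u\mapsto u-u^3$ by a standard absorbing-set argument.
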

\begin{proof} 
Note that $\phi$ is not globally swift transitive. We verify that $\phi$ is strongly recurrent at $1$. Using the monotonicity, we have 
$$
\phi_{t}(\cdot,B(1,R))=\left(\phi_{t}(\cdot,1-R),\ \phi_{t}(\cdot,1+R) \right)
$$
and
$$
\left( \phi_{t}(\cdot,B(1,R)) \subset B \bigg(1, \frac{R}{2}\bigg) \right)
=\left(\phi_{t}(\cdot,1-R)> 1-\frac{R}{2},\ \phi_{t}(\cdot,1+R)<1+\frac{R}{2} \right).
$$
{For any $T>0$, $n\in\N^+$, $0<s_1<\cdots<s_n<T$, 
$$
\PX(N_{s_1}(\cdot)=0, N_{s_2}(\cdot)-N_{s_1}(\cdot)=1, \cdots, N_{s_n}(\cdot)-N_{s_{n-1}}(\cdot)=1 )>0.
$$}
From this and $\ddt(\phi_t(\omega,x)-\phi_t(\omega,y))^2\leq 2(\phi_t(\omega,x)-\phi_t(\omega,y))^2$, we have there exists $t_1=t_1(R)\in (0,\ln 2)$ such that
$$
\PX(\phi_{t_1}(\cdot,B(1,R)) \subset   (1, 1+4R)))>0.
$$
Notice
\begin{align*}
&\PX \left( \phi_{t_1+t_2}(\cdot,B(1,R)) \subset B \bigg(1, \frac{R}{2}\bigg) \right)\\
&\geq \PX \left( \phi_{t_1+t_2}(\cdot,B(1,R)) \subset B \bigg(1, \frac{R}{2}\bigg), \phi_{t_1}(\cdot,B(1,R)) 
  \subset   (1, 1+4R)) \right)\\
& \geq \PX \left( \phi_{t_2}(\theta_{t_1}\cdot,(1,1+4R)) \subset B \bigg(1, \frac{R}{2}\bigg), \phi_{t_1}(\cdot,B(1,R)) \subset   (1, 1+4R)) \right).  
\end{align*}
Since $\mathcal{F}_{0,t_1}$  and $\mathcal{F}_{t_1,t_1+t_2}$ are independent, we get
\begin{align*}
  &\PX \left( \phi_{t_1+t_2}(\cdot,B(1,R)) \subset B \bigg(1, \frac{R}{2}\bigg) \right)\\
  &\geq \PX \left( \phi_{t_2}(\theta_{t_1}\cdot,(1,1+4R)) \subset B \bigg(1, \frac{R}{2}\bigg) \right) \PX \left( \phi_{t_1}(\cdot,B(1,R)) \subset   (1, 1+4R)) \right).  
\end{align*}
Since $\PX$ is invariant under $\theta_t$, we have 
\begin{align*}
&\PX \left( \phi_{t_1+t_2}(\cdot,B(1,R)) \subset B \bigg(1, \frac{R}{2}\bigg) \right)\\
&\geq \PX \left( \phi_{t_2}(\cdot,(1,1+4R)) \subset B \bigg(1, \frac{R}{2}\bigg) \right) \PX \left( \phi_{t_1}(\cdot,B(1,R)) \subset   (1, 1+4R)) \right). 
\end{align*}
Similar to the proof in Example~\ref{exam:3.3} and using the monotonicity again, we obtain that
$$
\PX \left( \lim_{t\rightarrow \infty }{\rm{diam}}(\phi_{t}(\cdot, (1,1+4R))) =0\right) =1,
$$
and there exists $t_2>0$ such that 
$$
\PX \left( \phi_{t_2}(\cdot,(1,1+4R)) \subset B \bigg(1, \frac{R}{2}\bigg) \right)>0.
$$
Thus
$$
\PX \left( \phi_{t_1+t_2}(\cdot,B(1,R)) \subset B \bigg(1, \frac{R}{2}\bigg) \right)>0.
$$

By the result in Example~\ref{exam:3.3},  we have $\phi$ is asymptotically stable on $B(1,\frac{1}{4})$.   
Using Theorem~\ref{thm:2.1}, the proof is complete.
\qed\end{proof}

\begin{remark}
As a corollary,  \eqref{eq:4-1} admits a unique invariant measure which is strongly mixing.     
\end{remark}

%%%%%%%%%%%%%%%%%%%%%%%%%%%%%%%%%%%%%%%%%%%%%%%%%%%%%%

\subsection{\bf An SDE driven by degenerate noise  }  \label{S42}

%%%%%%%%%%%%%%%%%%%%%%%%%%%%%%%%%%%%%%%%%%%%%%%%%%%%%%

Consider the SDE with drift given by a multidimensional double-well potential with degenerate additive noise~\cite{Vorkastner18}. That is
\begin{equation}\label{eq:exp 4-2}
\begin{aligned}
\dif X_t &= \left(X_t - \left|\left(\begin{array}{c} X_t \\ Y_t \end{array}\right)\right|^2 X_t\right) \dif t + \sigma \dif W_t, \quad \text{on} \ \mathbb{R}^n, \\
\dif Y_t &= \left(Y_t - \left|\left(\begin{array}{c} X_t \\ Y_t \end{array}\right)\right|^2 Y_t\right) \dif t, \quad \text{on} \ \mathbb{R}^{d-n},
\end{aligned}
\end{equation}
for $\sigma > 0$ and $d, n \in \mathbb{N}$ with $n < d$. $W_t$ is an $n$-dimensional Brownian motion. 

The following result is from~\cite{Vorkastner18}.
\begin{prop}
In the case $n = 1$ with $\sigma\geq 2$ and in the case $n \geq 2$, synchronization holds for system~\eqref{eq:exp 4-2}.
There is no weak synchronization, in the case $n = 1$ with $\sigma<\frac{1}{2}$.
\end{prop}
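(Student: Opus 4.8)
The plan is to treat the full state $z=(X,Y)\in\R^d$, for which the drift reads $b(z)=(1-|z|^2)z=-\nabla V(z)$ with $V(z)=\tfrac14|z|^4-\tfrac12|z|^2$, and to exploit one structural feature of the degenerate block: since the $Y$-equation $\dif Y_t=(1-|z_t|^2)Y_t\,\dif t$ is radial and noiseless, its solution is $Y_t=Y_0\exp\!\big(\int_0^t(1-|z_s|^2)\,\dif s\big)$, so the direction $\hat Y_t:=Y_t/|Y_t|$ is conserved while the modulus $|Y_t|$ is governed by the transverse Lyapunov exponent $\Lambda:=\int_{\R^d}(1-|z|^2)\,\rho(\dif z)$, where $\rho$ is the ergodic stationary measure. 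The whole proposition turns on the sign of $\Lambda$: synchronization when $\Lambda<0$, and no weak synchronization when $\Lambda\geq 0$. I would route synchronization through Theorem~\ref{thm:2.1} and the failure of synchronization through Theorem~\ref{thm:2.3}, with $\R^d$ providing the required local and $\sigma$-compactness.

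For the synchronizing regime ($n\geq2$, any $\sigma$; or $n=1$, $\sigma\geq2$) I would first record the stationary Fokker--Planck identities obtained from $\int \mathcal{L}f\,\dif\rho=0$ with $\mathcal{L}=b\cdot\nabla+\tfrac{\sigma^2}{2}\Delta_X$: taking $f=|z|^2$ gives $\EX_\rho[(1-|z|^2)|z|^2]=-\sigma^2 n/2$, and taking $f=|X|^2$ gives $\EX_\rho[(1-|z|^2)|X|^2]=-\sigma^2 n/2$, whence $\EX_\rho[(1-|z|^2)|Y|^2]=0$. These pin down only the $|z|^2$- and $|X|^2$-weighted averages of $1-|z|^2$, so the sign of the unweighted $\Lambda$ must be extracted from a quantitative description of how $\rho$ concentrates on the unit sphere as a function of $(n,\sigma)$; this is precisely where the thresholds enter and where I would follow the invariant-measure estimates of Vorkastner. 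Granting $\Lambda<0$, I would establish local asymptotic stability on a small ball $B(z^*,\eps)$ around a point $z^*=(x^*,0)$ with $|x^*|=1$ from the two-point estimate $\tfrac{d}{dt}|z_t-z_t'|^2=2(b(z_t)-b(z_t'),z_t-z_t')$ (the additive noise cancels): the cubic part of $b$ makes the right-hand side strictly negative whenever $|z_t|,|z_t'|$ are large, and $\Lambda<0$ guarantees that with positive probability both trajectories are driven into $\{|z|>1\}$ long enough, forcing $|Y_t|,|Y_t'|\to0$ and the $X$-parts to contract, so that $\mathrm{diam}(\phi_{t_n}(\cdot,B(z^*,\eps)))\to0$ on a set of positive probability. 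Strong recurrence at $z^*$ I would obtain by adapting Lemma~\ref{lem:3.2.2} to degenerate noise in the spirit of Remark~\ref{rem:3.5}: the full support of $W$ in the $X$-directions steers $X$ into a small neighbourhood of $x^*$, while the unforced $Y$-dynamics deterministically contracts $|Y|$ once $|z|>1$, giving positive probability that $\phi_{t_0}(\cdot,B(z^*,R))\subset B(z^*,R/2)$. Theorem~\ref{thm:2.1}(iii) then yields synchronization.

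For the non-synchronizing regime ($n=1$, $\sigma<1/2$) I would instead establish $\Lambda\geq0$ for small $\sigma$ (again via the concentration of $\rho$ near the sphere, where $1-|z|^2\approx0$) and then violate \eqref{eq:1.1} in probability. Pick two initial points $z_0=(0,\hat Y_0)$ and $z_0'=(0,\hat Y_0')$ on the sphere with distinct $Y$-directions $\hat Y_0\neq\hat Y_0'$; driving both by the same noise keeps $\hat Y_t\equiv\hat Y_0$ and $\hat Y_t'\equiv\hat Y_0'$ for all $t$. Because $\Lambda\geq0$, the additive functional $\log|Y_t|$ does not drift to $-\infty$, so $|Y_t|$ and $|Y_t'|$ stay bounded away from $0$ with probability bounded below uniformly in $t$; since the directions never coincide, $d(\phi_t(\omega,z_0),\phi_t(\omega,z_0'))\geq c\,(|Y_t|\wedge|Y_t'|)$ does not tend to $0$ in probability. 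Hence \eqref{eq:1.1} fails, and by Theorem~\ref{thm:2.3} weak synchronization cannot occur.

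The main obstacle is unquestionably the sharp determination of the sign of $\Lambda=\int(1-|z|^2)\,\dif\rho$. The stationary identities above only control weighted averages, so separating $\Lambda<0$ (for $n\geq2$ at all $\sigma$, and for $n=1$ once $\sigma\geq2$) from $\Lambda\geq0$ (for $n=1$, $\sigma<1/2$) demands genuinely quantitative control of the stationary density near the unit sphere, and this is the analytic heart of the bifurcation. A secondary difficulty is justifying the strong-recurrence step with only partially supported (degenerate) noise, i.e.\ checking that the noiseless $Y$-block really contracts along the steering trajectory used in the adaptation of Lemma~\ref{lem:3.2.2}.
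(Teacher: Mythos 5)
First, a point of comparison: the paper does not actually prove this proposition --- it is imported verbatim from \cite{Vorkastner18}, and the surrounding text only remarks that Theorem~\ref{thm:2.1} (local asymptotic stability plus strong recurrence at the origin) would give a ``slightly simpler'' route to the synchronization half. Your overall strategy --- reduce everything to the sign of the transverse Lyapunov exponent $\Lambda$ of the invariant manifold $M=\{Y=0\}$ --- is indeed the right organizing idea and matches the cited proof; your stationary identities are correct and even combine into the exact formula $\Lambda=\EX_{\rho_M}\bigl[(1-|x|^2)^2\bigr]-\sigma^2 n/2$, where $\rho_M$ is the invariant measure of the reduced $n$-dimensional SDE on $M$. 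But as written the proposal has two genuine gaps beyond the (acknowledged) deferral of the quantitative thresholds to \cite{Vorkastner18}.

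The first gap is in the local asymptotic stability step. The condition $\Lambda<0$ only forces $|Y_t|\to 0$, i.e.\ attraction \emph{to} $M$; it says nothing about two distinct trajectories approaching each other \emph{within} $M$. Your claim that being driven into $\{|z|>1\}$ ``forces the $X$-parts to contract'' is false for the synchronous coupling: writing $a=|z|$, $b=|z'|$ one gets $(b(z)-b(z'),z-z')\le |z-z'|^2-(a-b)^2(a^2+ab+b^2)$, so for $|z|=|z'|$ there is no pathwise contraction at all, regardless of how large $|z|$ is. Contraction of the $X$-block requires a separate negative-top-Lyapunov-exponent argument for the nondegenerate reduced SDE on $M$ (this is precisely what Vorkastner's Proposition 5.1/Corollary 5.1, or an argument in the spirit of Proposition~\ref{prop:3.1}, supply), and without it Theorem~\ref{thm:2.1}(iii) cannot be invoked. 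The second gap is in the non-synchronization step. Your $\Lambda$ is taken against ``the'' ergodic stationary measure, but in the regime where synchronization fails there are several ergodic invariant measures, and that is the whole point; if you mean $\rho_M$, then a trajectory started off $M$ need not have its occupation measure close to $\rho_M$, so ``$\log|Y_t|$ does not drift to $-\infty$'' does not follow from $\Lambda\ge 0$ without an additional argument, and even granted it, absence of drift to $-\infty$ is strictly weaker than the statement you need, namely that $|Y_t|$ stays bounded away from $0$ with probability bounded below uniformly in $t$ (failure of convergence in probability). The route consistent with the paper's toolbox is the one it uses for the Lorenz system: a positive transverse exponent produces a second ergodic invariant measure charging $\{|Y|>0\}$, hence the Markov semigroup is not strongly mixing, and Theorem~\ref{thm:2.3} then rules out weak synchronization. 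I would restructure the second half of your argument along those lines.
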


This example has been studied in~\cite{Vorkastner18}. 
The random dynamical system associated with \eqref{eq:exp 4-2} is not globally swift transitive. This can be seen by observing that the set $\{(x_1, x_2, \ldots, x_d) \in \mathbb{R}^d: x_i > 0\}$ is not reachable if one starts in $\{(x_1, x_2, \ldots, x_d) \in \mathbb{R}^d: x_i < 0\}$ for some $n < i \leq d$.  

In the case $n = 1$ with $\sigma\geq 2$ and in the case $n \geq 2$, 
the author derive a locally swift transitivity property on $M := \{(x_1, x_2, \ldots, x_d) \in \mathbb{R}^d: x_i = 0 \text{ for } i> n\}$ and contracting on large sets~\cite[Proposition 5.1, Corollary 5.1]{Vorkastner18}.  This, together with local asymptotic stability, implies synchronization. 
{ In fact, the random dynamical system associated with \eqref{eq:exp 4-2} is (locally) strongly recurrent at the origin. Our results, 
Theorem~\ref{thm:2.1}, can be applied directly. This provides a slightly simpler proof.}

On the other hand,   there is no weak synchronization, in the case $n = 1$ with $\sigma<\frac{1}{2}$~\cite[Theorem 6.1]{Vorkastner18}. A bifurcation appears.

%%%%%%%%%%%%%%%%%%%%%%%%%%%%%%%%%%%%%%%%%%%%%%%%%%%%%%

\subsection{\bf Stochastic Lorenz 63 systems }  \label{S43}

%%%%%%%%%%%%%%%%%%%%%%%%%%%%%%%%%%%%%%%%%%%%%%%%%%%%%%

We consider the stochastic Lorenz 63 system
\begin{equation}\label{eq:CH21}
\begin{aligned}
    \dif X&= \sigma(Y-X)\dif t,  \\
    \dif Y&= (\rho X-Y-XZ) \dif t,\\
    \dif Z&= (-\beta Z+XY) \dif t+ \gamma \dif W_t.   
\end{aligned}
\end{equation} 
When the noise is absent, the system reduces to the famous Lorenz 63 system~\cite{Temam}.
We fix two parameters $\sigma,\beta>0$. Then, the determined system has a global attractor. 
For $\rho<1$, the origin is a hyperbolic sink and is the only attractor. 
At $\rho=1$, a pitchfork bifurcation occurs. 
{ For $\rho>1$, the origin is a saddle point. Two fixed points appear at $(\pm\sqrt{\beta(\rho-1)},\pm\sqrt{\beta(\rho-1)},\rho-1)$. At $\rho=\rho_h=\frac{\sigma(\sigma+\beta+3)}{\sigma-\beta-1}$, a Hopf bifurcation occurs at the nontrivial fixed points.
The nontrivial fixed points are stable for $1<\rho <\rho_h$. The nontrivial fixed points are unstable when $\rho >\rho_h$.
}

Recently, M. Coti~Zelati and M. Hairer~\cite{CH21} studied the invariant measure of system~\eqref{eq:CH21}. We recall their main result.
\begin{prop} \label{prop:4.3.1}
For any $\sigma, \beta > 0$ and any $\rho < 1$, there exist values $0 < \gamma_* \leq \gamma^* < \infty$ such that
\begin{enumerate}
    \item For $|\gamma| < \gamma_*$, \eqref{eq:CH21} admits $\nu_1=\delta_0\times\delta_0\times N(0,\frac{\gamma^2}{2\beta})$ as its unique invariant measure.
    \item For $|\gamma|> \gamma^*$, \eqref{eq:CH21} admits exactly two ergodic invariant measures: $\nu_1$ and another measure $\nu_2$. Furthermore, $\nu_2$ has a smooth density with respect to Lebesgue measure on $\R^3 \setminus H$. Where  $H:=\{(x,y,z): x = y = 0\}$.
\end{enumerate}
For $\rho \geq 1$, there exists $\gamma^* \geq 0$ such that the second statement still holds.
\end{prop}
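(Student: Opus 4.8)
The plan is to reduce the entire dichotomy to the sign of a single transverse Lyapunov exponent attached to the invariant plane $H=\{x=y=0\}$, and then to feed that sign into an attraction/repulsion argument together with Hörmander hypoellipticity. First I would record the easy structural facts: on $H$ the system collapses to the scalar Ornstein--Uhlenbeck equation $\dif Z=-\beta Z\,\dif t+\gamma\,\dif W_t$, whose unique stationary law is $N(0,\gamma^2/(2\beta))$, so $\nu_1=\delta_0\otimes\delta_0\otimes N(0,\gamma^2/(2\beta))$ is always invariant and $H$ is invariant under the flow. A Lorenz-type dissipativity estimate for \eqref{eq:CH21} --- using a Lyapunov function $V=aX^2+bY^2+(Z-c)^2$ with suitably chosen constants (the choice $b=1$ cancels the cubic $XYZ$ term), giving $\mathcal{L}V\le C-\kappa V$ --- shows the Markov semigroup is tight, so at least one invariant measure exists, and every invariant measure has finite exponential moments. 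This confines all trajectories to a fixed large ball on average and is used repeatedly.

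Next I would introduce the object governing the bifurcation. Let $Z_t$ be the stationary OU process on $H$ and consider the linearised transverse dynamics
\[
\frac{\dif}{\dif t}\begin{pmatrix}\xi\\\eta\end{pmatrix}=A(Z_t)\begin{pmatrix}\xi\\\eta\end{pmatrix},\qquad A(z)=\begin{pmatrix}-\sigma&\sigma\\\rho-z&-1\end{pmatrix}.
\]
Since $\mathrm{tr}\,A(z)\equiv-(\sigma+1)$ and the entries have all moments under the stationary law, the multiplicative ergodic theorem produces the top exponent $\lambda_\perp(\gamma):=\lim_{t\to\infty}\tfrac1t\log|(\xi_t,\eta_t)|$, and the Furstenberg--Khasminskii formula expresses it as an integral of $\langle A(z)v,v\rangle$ against the stationary measure of the projective process on $S^1\times\R$. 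The whole proposition then reduces to two sign statements. As $\gamma\to0$ the law of $Z_t$ concentrates at $0$ and $A(Z_t)\to A(0)$, whose eigenvalues have negative real part because $\det A(0)=\sigma(1-\rho)>0$ and $\mathrm{tr}\,A(0)<0$ (here $\rho<1$ is used); by continuity of the projective stationary measure in $\gamma$ this gives $\lambda_\perp(\gamma)<0$ for $|\gamma|<\gamma_*$. As $\gamma\to\infty$ the law of $Z$ spreads out so that $Z$ spends an order-one fraction of time at large negative values, where $\det A(z)=\sigma(1-\rho+z)<0$ forces a real positive eigenvalue of size $\sim\sqrt{\sigma|z|}$; a Furstenberg--Khasminskii lower bound then yields $\lambda_\perp(\gamma)>0$ for $|\gamma|>\gamma^*$.

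With the sign of $\lambda_\perp(\gamma)$ in hand I would draw the two conclusions. When $\lambda_\perp(\gamma)<0$, the transverse distance $r_t=\sqrt{X_t^2+Y_t^2}$ satisfies $\tfrac1t\log r_t\to\lambda_\perp<0$ near $H$ (via the stable-manifold picture of Proposition~\ref{prop:3.1}); combining this local contraction with the global confinement from $V$ shows $r_t\to0$ almost surely from every initial condition, so any invariant measure is supported on $H$ and must equal $\nu_1$, giving uniqueness. When $\lambda_\perp(\gamma)>0$, the plane $H$ is transversally repelling, so trajectories started off $H$ keep a positive distance from it on average; Krylov--Bogolyubov applied to such trajectories, together with the $V$-confinement, produces an invariant measure $\nu_2$ with $\nu_2(H)=0$. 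On $\R^3\setminus H$ Hörmander's condition holds: with $\sigma_0=\gamma\partial_z$ one computes $[b,\sigma_0]=\gamma X\,\partial_y$ and $[b,[b,\sigma_0]]$ carries a $\partial_x$-component proportional to $X$, so $\{\sigma_0,[b,\sigma_0],[b,[b,\sigma_0]]\}$ spans $\R^3$ wherever $X\neq0$, and trajectories off $H$ immediately enter $\{X\neq0\}$; hence $\nu_2$ has a smooth density on $\R^3\setminus H$. The same hypoellipticity, with the noise-driven crossing through $\{X=0,\,Y\neq0\}$ linking the two symmetric wings under $(X,Y,Z)\mapsto(-X,-Y,Z)$, gives unique ergodicity off $H$, so $\nu_1$ and $\nu_2$ are exactly the two ergodic invariant measures. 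The case $\rho\ge1$ is identical except that $A(0)$ is no longer stable, so only the large-$\gamma$ half survives.

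The main obstacle I expect is making the two sign statements for $\lambda_\perp(\gamma)$ fully rigorous, together with the global uniqueness step. The large-$\gamma$ lower bound is delicate because the Furstenberg--Khasminskii integrand is not sign-definite and one must control the projective stationary measure uniformly as $\gamma\to\infty$; the small-$\gamma$ uniqueness is delicate because passing from the linear transverse exponent on $H$ to genuine attraction of the full nonlinear flow requires ruling out any invariant measure that leaks mass off $H$, which cannot be read off from the linearisation alone and needs the Lyapunov confinement to close the argument.
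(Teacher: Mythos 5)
This proposition is not proved in the paper at all: it is recalled verbatim from Coti~Zelati and Hairer \cite{CH21}, so the only ``proof'' in the source is the citation. Your sketch is a fair roadmap of the architecture of \cite{CH21} (invariance of $H$, the Ornstein--Uhlenbeck marginal giving $\nu_1$, a quadratic Lyapunov function for tightness, a transverse Lyapunov exponent $\lambda_\perp(\gamma)$ whose sign drives the transition, and H\"ormander hypoellipticity off $H$), but as it stands it is an outline of a thirty-page argument rather than a proof, and the two places you flag as ``delicate'' are genuine gaps, not loose ends.

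Concretely: (a) The small-$\gamma$ uniqueness does \emph{not} follow from ``$\lambda_\perp<0$ plus the stable-manifold picture plus confinement.'' A negative transverse exponent of the linearisation along the stationary measure \emph{on} $H$ only controls trajectories in a neighbourhood of $H$, and an invariant measure could in principle live entirely away from $H$. The mechanism that closes this in \cite{CH21} (and which this paper reproduces in Lemma~\ref{lem:4.3.3}) is an exact algebraic cancellation: for the difference between an arbitrary solution and the solution $(0,0,Z_2)$ on $H$, the cubic terms in $\frac{\dif}{\dif t}\bigl[\tfrac{X^2}{\sigma}+Y^2+(Z-Z_2)^2\bigr]$ cancel identically, leaving the \emph{global} estimate $\frac{\dif}{\dif t}[\cdots]\le(|1+\rho-Z_2|-2)(X^2+Y^2)-2\beta(Z-Z_2)^2$, whose averaged coefficient is negative for $|\gamma|<(1-\rho)\sqrt{\pi\beta}$ by the ergodic theorem. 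Without noticing this cancellation your claim that $r_t\to0$ from \emph{every} initial condition does not follow. (b) The large-$\gamma$ positivity of $\lambda_\perp$ is the technical heart of \cite{CH21}: the Furstenberg--Khasminskii integrand is not sign-definite, and one must control the stationary measure of the projective process uniformly as $\gamma\to\infty$; asserting ``a lower bound then yields $\lambda_\perp>0$'' skips the main estimate. (c) Even granting $\lambda_\perp>0$, producing $\nu_2$ with $\nu_2(H)=0$ via Krylov--Bogolyubov requires a quantitative repulsion from $H$ (a Lyapunov function blowing up at $H$, or tightness of occupation measures away from $H$), and ``exactly two ergodic measures'' requires an irreducibility/控制-type argument off $H$ linking the two wings under $(X,Y,Z)\mapsto(-X,-Y,Z)$; both are asserted rather than established. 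If your goal is to match what this paper actually needs, the honest course is to cite \cite{CH21} for the proposition and to extract only the global contraction estimate of Lemma~\ref{lem:4.3.3}, which is self-contained.
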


\begin{remark} \em
 M. Coti~Zelati and M. Hairer detect a noise-induced transition for the invariant measure of the Lorenz system~\eqref{eq:CH21}.   Although their results do indeed guarantee that, in this case of $\rho < 1,|\gamma| < \gamma_*$, the system admits a unique invariant measure,  they provide little information about the specific value of $\gamma_*,\gamma^*$ and
 whether $\nu_1$ is strong mixing. Consequently, they say {\em ``One would naturally expect to have  $\gamma_* = \gamma^* $, but we cannot guarantee this at the moment. $\cdots$ One would
also expect to have $\gamma^*=0$  when $\rho > 1$  $\cdots$,  we cannot rule out the existence of an intermediate range of values (of $\gamma$) for which $\nu_1$ would be the unique invariant measure."}
 \end{remark}

Let  $O^\lambda _t(\omega)=O^\lambda (\theta_t\omega)$ be the  stationary solution of the following
Langevin equation
 \begin{equation*}
      \dif O^\lambda = -\lambda O^\lambda  dt +
     \gamma \dif W_t.
 \end{equation*}
Here $\lambda >0$ is a  parameter to be determined, and 
\[
O^{\lambda}(\omega)= \int_{-\infty}^0
 \gamma e^{\lambda s} \dif W_s=-\int_{-\infty}^0
\gamma \lambda e^{\lambda s}\omega(s)\dif s, \; \omega
\in \Omega.
\]
Define $z_t(\omega)=Z_t(\omega)-O^\lambda_t (\omega)$, we have
\begin{equation}\label{eq:CH21-1}
  \begin{aligned}
    \dif x&=\sigma(y-x)\dif t,  \\
    \dif y&= ((\rho-O^\lambda ) x-y-xz) \dif t,\\
    \dif z&= (-\beta z+xy-(\beta-\lambda)O^\lambda) \dif t. 
  \end{aligned}  
\end{equation}

The following lemma derives the unique random attractor of \eqref{eq:CH21-1}.  As a result, there is a unique global attractor of \eqref{eq:CH21}.
\begin{lemma} \label{lem:4.3.1} For any $\gamma, \rho\in \R, \beta,\sigma>0$,  there exists a unique global attractor of \eqref{eq:CH21-1}. 
\end{lemma}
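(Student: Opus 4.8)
The plan is to realize \eqref{eq:CH21-1} as a continuous random dynamical system $\phi$ over the metric dynamical system $(\Omega,\mathcal{F},\PX,(\theta_t))$ driving the Ornstein--Uhlenbeck process $O^\lambda$, and then to exhibit a compact random absorbing set; in the finite-dimensional state space $\R^3$ this immediately yields a unique global random attractor by the standard criterion that a compact random absorbing set implies existence of a unique random attractor. Since the drift of \eqref{eq:CH21-1} is locally Lipschitz with at most polynomial growth and $t\mapsto O^\lambda_t(\omega)$ is continuous, for each fixed $\omega$ the transformed system is a non-autonomous ODE with a unique local solution; the a priori bound obtained below rules out blow-up, so $\phi(t,\omega,\cdot)$ is a well-defined cocycle depending continuously on the initial datum.

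The heart of the matter is a dissipativity estimate. I would test \eqref{eq:CH21-1} against the Lyapunov function
\[
V(x,y,z) := \tfrac12\bigl(a\,x^2 + y^2 + (z-k)^2\bigr),\qquad k := a\sigma+\rho,
\]
where the weight $a>0$ is fixed later and the weights on $y^2$ and $(z-k)^2$ are taken equal. The equality of these two weights is exactly what makes the cubic cross terms $-xyz$ (from $\dif y$) and $+xyz$ (from $(z-k)\,\dif z$) cancel, the algebraic feature responsible for dissipativity of every Lorenz system; the choice $k=a\sigma+\rho$ removes the deterministic part of the $xy$ cross term and leaves only $-O^\lambda_t\,xy$. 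Absorbing the remaining cross and linear terms by Young's inequality then gives, along trajectories, a differential inequality of the form
\[
\ddt V \;\le\; \Bigl(\tfrac{(O^\lambda_t)^2}{a\sigma}-2\mu\Bigr)V \;+\; \kappa(\theta_t\omega),
\]
where $2\mu=\min\{\sigma,2,\beta/2\}>0$ and $\kappa$ is a stationary process with finite expectation, collecting the terms $\beta k^2$, $(\beta-\lambda)^2(O^\lambda_t)^2/\beta$ and $k(\beta-\lambda)O^\lambda_t$.

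Next I would fix the free weight $a$ large enough that $\tfrac{1}{a\sigma}\EX[(O^\lambda)^2]=\tfrac{\gamma^2}{2\lambda\sigma a}<2\mu$; this is possible for every fixed $\lambda,\gamma,\beta,\sigma$ and makes the random exponential rate $r_t:=\tfrac{(O^\lambda_t)^2}{a\sigma}-2\mu$ have strictly negative mean. Since $(\theta_t)$ is ergodic and $(O^\lambda_t)^2$ is integrable and tempered, Birkhoff's theorem gives $\tfrac1t\int_0^t r_s\,\dif s\to \EX[r]<0$ almost surely, while $\kappa$ and $O^\lambda$ are tempered. A pullback Gronwall estimate (replacing $\omega$ by $\theta_{-t}\omega$ and letting $t\to\infty$) then shows that $V$, and hence $|(x,y,z)|$, is ultimately bounded by a tempered random radius $r(\omega)$, so that the closed ball $B(\omega)=\{V\le r(\omega)\}$ is a random absorbing set. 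Being a bounded closed subset of $\R^3$ it is compact, whence $\mathcal{A}(\omega)=\bigcap_{s\ge0}\overline{\bigcup_{t\ge s}\phi(t,\theta_{-t}\omega)B(\theta_{-t}\omega)}$ is the unique global random attractor.

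\emph{Main obstacle.} The delicate part is the dissipativity estimate itself: the algebraic cancellation of the cubic term (which forces the equal $y,z$ weights), the shift $k$ that reduces the $xy$ term to a purely noisy coefficient, and above all the control of the sign of the averaged Lyapunov rate. Because $O^\lambda$ is unbounded, the $xy$ cross term cannot be absorbed into a constant-coefficient negative quadratic; one is forced to accept a random, time-dependent rate $r_t$ and to argue through ergodicity that its mean is negative, which is secured by choosing the Lyapunov weight $a$ large. Verifying the temperedness of $r(\omega)$ and of $\kappa$, required to apply the random-attractor framework legitimately, is the remaining technical check.
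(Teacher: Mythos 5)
Your proposal is correct and follows essentially the same route as the paper: a shifted quadratic Lyapunov function whose weights cancel the cubic terms, Young's inequality yielding a linear differential inequality with random rate $-K+O^2/(a\sigma)$, the ergodic theorem to make the averaged rate negative, and a pullback Gronwall argument producing a tempered compact absorbing ball. The only (immaterial) difference is that you make the mean rate negative by enlarging the Lyapunov weight $a$ on $x^2$, whereas the paper keeps the unweighted function and instead chooses the Ornstein--Uhlenbeck parameter $\lambda>\gamma^2/(\sigma K)$ so that $\EX (O^\lambda)^2=\gamma^2/(2\lambda)$ is small.
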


\begin{proof}
Let $L:=\frac{1}{2}(x^2+y^2+(z-\rho-\sigma)^2)$ and write $O=O^\lambda$ for simiplicity. We have that 
$$
\ddt L=-\sigma x^2-y^2-\beta z^2-O xy+\beta (\rho+\sigma) z +(z-\rho-\sigma)(\lambda-\beta)O.
$$ 
By the Young inequality and $-z(z-\rho-\sigma)=-\frac{1}{2} z^2-\frac{1}{2}(z-\rho-\sigma)^2+\frac{1}{2}(\rho+\sigma)^2$, we have
\begin{align*}
\ddt L &\leq -\sigma x^2-y^2+\frac{\sigma}{2}x^2+ \frac{O^2}{2\sigma} y^2-\frac{\beta}{2} (z-\rho-\sigma)^2  +\frac{\beta}{2}(\rho+\sigma)^2\\
 &\quad+\frac{\beta}{4} (z-\rho-\sigma)^2 +\frac{2}{\beta}(\lambda-\beta)^2O^2\\
      &\leq 2 \max\{-\frac{\sigma}{2}, -\frac{\beta}{4}, -1+ \frac{O^2}{2\sigma}\} L +\frac{\beta}{2}(\rho+\sigma)^2+\frac{2}{\beta}(\lambda-\beta)^2O^2.   
\end{align*}
Thus
$$
 \ddt L\leq   (-K+ \frac{O^2}{\sigma}) L+M(\theta_t\omega),   
$$
with $K=\min \{{\sigma}, \frac{\beta}{2}, 2\}$ and $M(\theta_t\omega)=\frac{\beta}{2}(\rho+\sigma)^2+\frac{2}{\beta}(\lambda-\beta)^2O^2(\theta_t\omega)$.
By the Gronwall lemma, we get
$$
    L(s,\omega,L_0)\leq   e^{-K s+ \frac{1}{\sigma}\int_0^s {O^2(\theta_r\omega)} \dif r } L_0+  \int_0^s e^{-K (s-r)+ \frac{1}{\sigma}\int_r^s {O^2(\theta_\tau\omega)} \dif \tau }  M(\theta_r\omega) \dif r.
$$
Replacing $\omega$ by $\theta_{-t}\omega$ and $s$ by $t$, from above inequality we get 
\begin{equation}\label{a}
    L(t,\theta_{-t}\omega,L_0)\leq   e^{-K t+ \frac{1}{\sigma}\int_{-t}^0 {O^2(\theta_r\omega)} \dif r } L_0+  \int_{-t}^0 e^{K r + \frac{1}{\sigma}\int_r^0 {O^2(\theta_r\omega)} \dif r }  M(\theta_r\omega) \dif r.
\end{equation}
By the ergodic theorem, we have 
\begin{equation}\label{b}
    \lim_{t\rightarrow \pm\infty} \frac{1}{t} \int_0^t O^2 (\theta_s \omega) ds = \EX O^2(\omega)=\EX \left(\int_{-\infty}^0
 \gamma e^{\lambda s} \dif W_s \right)^2=\frac{\gamma^2}{2\lambda}. 
\end{equation}
Due to \cite[Proposition 4.3.3]{Arn98}, there exists a tempered function $r(\omega)> 0$ such that
$
|O(\omega)|^2\leq  r(\omega),
$
and for any $\epsilon>0$,
$$
|O(\theta_t\omega)|^2\leq   e^{\epsilon t }r (\omega),
$$ $\mathbb{P}$-a.e. $\omega\in\Omega$.  Thanks to \eqref{b},
we choose $\lambda > \frac{\gamma^2}{\sigma K}$ such that 
\begin{equation}\label{b-2}
 \lim_{t\rightarrow \infty} [-K + \frac{1}{\sigma t}\int_{-t}^0 {O^2(\theta_r\omega)} \dif r] <0, 
\end{equation}
and further
\begin{equation}\label{c}
R(\omega):=\int_{-\infty}^0 e^{K r + \frac{1}{\sigma}\int_r^0 {O^2(\theta_r\omega)} \dif r }  M(\theta_r\omega) \dif r 
\end{equation}
is well-defined.
Due to \eqref{a},\eqref{b-2} and \eqref{c},  there exists a time $T_0(\omega)>0$ and a positive constant $c$ such that for all $t\geq T_0$, 
\begin{equation}\label{d}
     L(t,\theta_{-t}\omega,L_0)\leq  e^{-c t}L_0 +R(\omega).
\end{equation}
By \eqref{d}, we have proved the existence of a compact absorbing set in $\R^3$. The standard method for random attractors implies that there is a global pullback attractor for \eqref{eq:CH21-1}~\cite{Crauel94, Sc97}. \qed
\end{proof}

\begin{lemma}  \label{lem:4.3.2}
Assume that $\rho<1$,  for any $\gamma \in \R, \beta,\sigma>0$,  \eqref{eq:CH21} is strongly recurrent at the origin $z_*=(0,0,0)$.
\end{lemma}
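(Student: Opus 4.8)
The plan is to verify that the origin satisfies a \emph{weighted} one–point dissipativity estimate of the form \eqref{eq:drift-weak}, and then to run the argument of Lemma~\ref{lem:3.2.2}, adapted to degenerate additive noise as indicated in Remark~\ref{rem:3.5}. Write $w=(x,y,z)$ and let $b(w)=(\sigma(y-x),\,\rho x-y-xz,\,-\beta z+xy)$ be the drift of \eqref{eq:CH21}, so that $b(0)=0$. First I would look for a diagonal weight $P=\mathrm{diag}(a,b,b)$ with $a,b>0$; the two cubic terms $-bxyz$ and $+bxyz$ occurring in the weighted pairing cancel precisely because the last two weights coincide, leaving
\[
\langle w,b(w)\rangle_P=-a\sigma x^2-b y^2-b\beta z^2+(a\sigma+b\rho)xy.
\]
Setting $t:=a\sigma/b$, negative definiteness of the $(x,y)$–form is equivalent to $4t>(t+\rho)^2$, a quadratic inequality in $t$ whose discriminant equals $16(1-\rho)>0$; hence an admissible positive $t$ (equivalently weights $a,b>0$) exists \emph{exactly} because $\rho<1$. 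This produces a constant $\lambda_P>0$ with $\langle w,b(w)\rangle_P\le-\lambda_P\,|w|_P^2$ for all $w\in\R^3$, i.e. the weighted version of \eqref{eq:drift-weak} holds at $z_*=0$. This is where the hypothesis $\rho<1$ enters decisively, and it explains why the Euclidean form of \eqref{eq:drift-weak} may fail (for large $\sigma$) while the weighted one does not.

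Next I would take the control corresponding to switching off the noise, i.e. the trivial path, for which the deterministic Lorenz flow $\psi_t:=\phi_t(\omega^0,\cdot)$ fixes the origin, $\psi_t(0)=0$. Differentiating $|\psi_t(y)|_P^2=|\psi_t(0)-\psi_t(y)|_P^2$ and using the weighted dissipativity gives $|\psi_t(y)|_P\le e^{-\lambda_P t}|y|_P$, and by equivalence of norms $|\psi_t(y)|\le \sqrt{C_1/c_1}\,e^{-\lambda_P t}|y|$ for all $y$, with $c_1=\min(a,b)$, $C_1=\max(a,b)$. Given $R>0$, I then fix $t_0$ so large that $\sqrt{C_1/c_1}\,e^{-\lambda_P t_0}\le \tfrac14$, whence $\psi_{t_0}(\overline{B(0,R)})\subset B(0,R/4)$. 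The same weighted estimate, applied to the equation driven by a path with $\sup_{s\in[0,t_0]}\gamma|W_s|\le\delta$, keeps every trajectory starting in $\overline{B(0,R)}$ inside a fixed compact set on $[0,t_0]$ (cf. the dissipative bounds behind Lemma~\ref{lem:4.3.1}), so that no blow–up occurs and the locally Lipschitz drift is Lipschitz on the relevant region.

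Finally I would invoke the continuity of the solution map $\omega\mapsto \phi_{t_0}(\omega,\cdot)\in C(\overline{B(0,R)};\R^3)$ on the compact region just identified, together with the full support of the driving Brownian motion: since the noise is additive and scalar, the event $\{\sup_{s\in[0,t_0]}\gamma|W_s|\le\delta\}$ has positive probability for every $\delta>0$. Choosing $\delta$ small enough that on this event $\sup_{y\in \overline{B(0,R)}}|\phi_{t_0}(\omega,y)-\psi_{t_0}(y)|\le R/4$, one obtains $\phi_{t_0}(\omega,B(0,R))\subset B(0,R/2)$ with positive probability, which is exactly strong recurrence at the origin in the sense of Definition~\ref{def:loc-ST}. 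I expect the main obstacle to be this last step, where degeneracy and merely local Lipschitz regularity interact: one can only steer the $Z$–component, so it is essential that the control required is the trivial one, and one must produce a priori bounds (from the weighted Lyapunov function) guaranteeing that small additive noise perturbs the entire ball $\overline{B(0,R)}$ uniformly little without trajectories escaping to regions of large Lipschitz constant.
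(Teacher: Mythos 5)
Your proposal is correct and follows the same route as the paper's proof: take the zero control $\omega^0\equiv 0$, observe that the deterministic Lorenz flow fixes the origin and contracts $\overline{B(0,R)}$ into a much smaller ball by some time $t_0$, and then combine continuity of the solution map in the driving path with the full support of Brownian motion around the zero path to get the containment $\phi_{t_0}(\cdot,B(0,R))\subset B(0,R/2)$ with positive probability. The one place you genuinely differ is in how the deterministic contraction is justified. The paper asserts that the drift satisfies the Euclidean one-point condition \eqref{eq:drift-weak} at the origin and otherwise cites global asymptotic stability from Sparrow; in fact the Euclidean pairing is $(b(w),w)=-\sigma x^2-y^2-\beta z^2+(\sigma+\rho)xy$, which is indefinite whenever $(\sigma+\rho)^2>4\sigma$ (e.g.\ $\sigma=10$, $\rho=1/2$), so \eqref{eq:drift-weak} as literally stated can fail even for $\rho<1$. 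Your weighted form with $P=\mathrm{diag}(a,b,b)$, the cancellation of the cubic terms, and the discriminant computation $16(1-\rho)>0$ repair this cleanly and yield exponential contraction in an equivalent norm, which is a sharper and more self-contained substitute for the paper's appeal to the classical Lyapunov-function argument. Your attention to the a priori bounds keeping perturbed trajectories in a compact set (so that local Lipschitzness suffices for the Gronwall/continuity step) is also the right care to take; the paper leaves this implicit.
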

\begin{proof} Note that the drift term of Lorenz system satisfies conditions \eqref{eq:drift-weak} at the origin $z_*$. The proof is similar to that of Lemma~\ref{lem:3.2.2}, although the system is forced by degenerate noise, see Remark~\ref{rem:3.5}. 
Define
\[
 \omega^0(t) := 0, \ \psi_t(\cdot) := \phi_t(\omega^0,\cdot).
\]
Then $\psi_t(\cdot)$ is the solution map of the determined Lorenz 63 system with $\psi_t(z_*)=\phi_t(\omega^0, z_*) = z_*$ for all $t \geq 0$.  
Assume that $\rho<1$. The Lyapunov function method shows that the origin $z_*$ is globally asymptotically stable~\cite{Sparrow12}. Moreover,
for all $y \in B(z_*, R)$, 
\[
\lim_{t\rightarrow \infty}|z_*- \phi_t(\omega^0, y)| =0.
\]
Choose $t_0 \geq 0$ such that for all $y \in B(z_*, R)$
$$
|z_*- \phi_{t_0}(\omega^0, y)|  \leq \frac{R}{16}.
$$
 Then for all $y \in B(z_*, R)$ and $\omega \in \Omega$,
\begin{align*}
 |z_* - \phi_{t_0}(\omega, y)| 
 &\leq |z_* - \phi_{t_0}(\omega^0, y)| + |\phi_{t_0}(\omega^0, y) - \phi_{t_0}(\omega, y)| \\
 &\leq \frac{R}{16} + |\phi_{t_0}(\omega^0, y) - \phi_{t_0}(\omega, y)|.
\end{align*}
The map $\omega \mapsto \phi_{t_0}(\omega, y)$ is continuous from $C([0, t_0]; \mathbb{R})$ to $\mathbb{R}^3$. Then there exists an $\delta > 0$ such that
\begin{align*}
 \PX \left( \phi_{t_0}(\cdot, B(z_*, R)) \subset B \left( z_*, \frac{R}{8} \right) \right) 
 &\geq \mathbb{P} \left( \sup_{y \in B(x, R)} |\phi_{t_0}(\omega^0, y) - \phi_{t_0}(\omega, y)| \leq \frac{R}{16} \right)  \\
 &\geq \mathbb{P} \left( \sup_{s \in [0, t_0]} |\omega(s) - \omega^0(s)| \leq \delta \right) > 0.
\end{align*}
\qed
\end{proof}

\begin{lemma}  \label{lem:4.3.3}
Assume that $|\gamma| < (1-\rho)\sqrt{\pi\beta}$, then %, for any $R>0$, 
\eqref{eq:CH21} is locally asymptotically stable. %on $B(z,R)$. 
\end{lemma}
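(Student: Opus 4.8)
The plan is to deduce local asymptotic stability from a pathwise contraction estimate between two solutions driven by the \emph{same} realization of the noise, started in a small ball $B(z_*,\varepsilon)$ around a point $z_*=(0,0,z_0)$ on the $z$-axis $H=\{x=y=0\}$, which carries the invariant measure $\nu_1=\delta_0\times\delta_0\times N(0,\gamma^2/(2\beta))$. Since the noise enters only the $Z$-equation additively, it cancels in the difference, so the difference flow is an ordinary (random-coefficient) ODE, and showing that $\mathrm{diam}(\phi_{t_n}(\cdot,B(z_*,\varepsilon)))\to0$ along a deterministic sequence $t_n\uparrow\infty$ with positive probability is exactly asymptotic stability in the sense of Definition~\ref{def:AS} (equivalently, via Proposition~\ref{prop:3.1}, negativity of the top Lyapunov exponent along $\nu_1$). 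The first step is the arithmetic observation that for $Z\sim N(0,\gamma^2/(2\beta))$ one has $\EX|Z|=|\gamma|/\sqrt{\pi\beta}$, so the hypothesis $|\gamma|<(1-\rho)\sqrt{\pi\beta}$ is precisely $\EX|Z|<1-\rho$; this is the quantity the energy estimate must reproduce.

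Next I would fix a companion solution $(X',Y',Z')$ and set $(a,b,c)=(X-X',\,Y-Y',\,Z-Z')$, which solves
\[
\ddt a=\sigma(b-a),\qquad \ddt b=(\rho-Z'_t)a-b-Xc,\qquad \ddt c=-\beta c+Xb+Y'_t a.
\]
The right choice of energy is the $\sigma$-weighted one, $\Phi:=\tfrac12(\tfrac1\sigma a^2+b^2+c^2)$. A direct computation shows that the cross terms $\mp Xbc$ coming from $b\,\ddt b$ and $c\,\ddt c$ cancel, leaving the clean identity
\[
\ddt\Phi=-a^2-b^2-\beta c^2+(1+\rho-Z'_t)\,ab+Y'_t\,ac .
\]
Completing the square on the deterministic form, $-a^2-b^2+(1+\rho)ab\le-\tfrac{1-\rho}{2}(a^2+b^2)$ (this is where $\rho<1$ enters), estimating $-Z'_t\,ab\le\tfrac12|Z'_t|(a^2+b^2)$, and absorbing the lower-order coupling $Y'_t\,ac$ (legitimate because the transverse variables satisfy the same contraction, so $Y'_t\to0$), one arrives at the key differential inequality
\[
\ddt\Phi\ \le\ -\tfrac12\big(1-\rho-|Z'_t|\big)(a^2+b^2)-\beta c^2+o(1)\,(a^2+c^2).
\]

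To finish, I would invoke the ergodic theorem for the stationary Ornstein--Uhlenbeck process $Z'_t$, exactly as for $O^\lambda$ in Lemma~\ref{lem:4.3.1}, giving $\tfrac1t\int_0^t|Z'_s|\,ds\to\EX|Z'|=|\gamma|/\sqrt{\pi\beta}$. Thus the \emph{time-averaged} contraction coefficient equals $\tfrac12(1-\rho-\EX|Z'|)>0$ under the hypothesis, which should force $\Phi(t)\to0$, hence $\mathrm{diam}(\phi_{t_n}(\cdot,B(z_*,\varepsilon)))\to0$ with full (so positive) probability.

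The hard part will be precisely this last conversion. The instantaneous rate $1-\rho-|Z'_t|$ is \emph{negative} on the recurrent excursions where $|Z'_t|>1-\rho$, so Gr\"onwall cannot be applied directly; these excursions have to be controlled through the tempered, sub-exponential growth of the Ornstein--Uhlenbeck path (the estimate $|O(\theta_t\omega)|^2\le e^{\varepsilon t}r(\omega)$ already used in Lemma~\ref{lem:4.3.1}), so that their aggregate multiplicative effect is dominated by the gain accumulated on the good set exactly when $\EX|Z'|<1-\rho$. A secondary technical point is that for general $\sigma$ the weighted energy $\Phi$ and the quantity $a^2+b^2$ differ by $\sigma$-dependent constants, so the comparison between favorable and unfavorable intervals must be carried out at the level of the integrated inequality rather than by a naive pointwise logarithmic-derivative bound, and this bookkeeping has to be made uniform over all pairs of initial points in $B(z_*,\varepsilon)$.
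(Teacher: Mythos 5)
Your overall strategy --- the $\sigma$-weighted energy whose cubic cross terms cancel, followed by the ergodic theorem for the Ornstein--Uhlenbeck process and the identity $\EX|O^\beta|=|\gamma|/\sqrt{\pi\beta}$, so that the hypothesis reads $\EX|O^\beta|<1-\rho$ --- is exactly the paper's, and your algebra for $\ddt\Phi$ is correct. The genuine gap is in how you set up the comparison. You take the companion solution $(X',Y',Z')$ to be an arbitrary nearby solution and then (a) discard the coupling term $Y'_t\,ac$ on the grounds that ``$Y'_t\to0$'', and (b) apply the ergodic theorem to $Z'_t$ as if it were the stationary OU process. Neither is available a priori: $Y'_t\to0$ is a consequence of the very contraction you are trying to establish, so the argument is circular, and $Z'$ solves $\dif Z'=(-\beta Z'+X'Y')\,\dif t+\gamma\,\dif W_t$, which is an OU process only when $X'Y'\equiv0$, so its time averages are not controlled by $\EX|O^\beta|$. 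The paper removes both obstructions at once by taking the companion solution \emph{on} the invariant axis $H=\{x=y=0\}$, namely $\phi_2(t)=(0,0,Z_2(t))$ with $\phi_2(0)=(0,0,0)$: then $X'\equiv Y'\equiv0$, the term $Y'ac$ vanishes identically, and $Z_2$ is exactly the OU process to which the ergodic theorem applies. With that single change your computation reduces to the paper's inequality $\ddt\bigl[\tfrac{X_1^2}{\sigma}+Y_1^2+(Z_1-Z_2)^2\bigr]\le(|1+\rho-Z_2|-2)(X_1^2+Y_1^2)-2\beta(Z_1-Z_2)^2$, and convergence of every trajectory to $\phi_2$ yields the diameter estimate of Definition~\ref{def:AS} by the triangle inequality.

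Two secondary points. The ``hard part'' you anticipate, Gr\"onwall with a sign-changing rate, is not an obstacle: $\ddt\Phi\le g(t)\Phi$ gives $\Phi(t)\le\Phi(0)\exp\bigl(\int_0^t g(s)\,\dif s\bigr)$ for any locally integrable $g$, regardless of sign, and a negative Ces\`aro limit of $g$ then forces $\Phi(t)\to0$; no excursion or temperedness analysis is required. By contrast, the bookkeeping issue you flag at the end is real: the dissipation is measured in $a^2+b^2$ while the energy carries the weight $a^2/\sigma$, so converting the displayed inequality into $\ddt\Phi\le g(t)\Phi$ with negative time-averaged $g$ costs $\sigma$-dependent constants when $\sigma\neq1$ and, if done crudely, shrinks the admissible range of $\gamma$; the paper passes over this point silently. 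You were right to notice it, but it affects constants rather than the structure of the argument, whereas the choice of reference solution on $H$ is what the proof actually hinges on.
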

\begin{proof} 
Let $\phi_1(t)=(X_1(t),Y_1(t),Z_1(t))$ be the solution map of~\eqref{eq:CH21} with initial data $\phi_1(0)$ and $\phi_2(t)=(X_2(t),Y_2(t),Z_2(t))=(0,0,Z_2(t))$ be the solution map of~\eqref{eq:CH21} with initial data $\phi_2(0)=(0,0,0)$.
Then 
$
|\phi_1-\phi_2|^2=(X_1-X_2)^2+(Y_1-Y_2)^2+(Z_1-Z_2)^2=X_1^2+Y_1^2+(Z_1-Z_2)^2.
$
Notice that
\begin{align*}
\ddt  [\frac{X_1^2}{\sigma}+Y_1^2+(Z_1-Z_2)^2]
&\leq -2 X_1^2-2Y_1^2-2\beta(Z_1-Z_2)^2+(2+2\rho-2Z_2)X_1Y_1\\
&\leq -2 X_1^2-2Y_1^2-2\beta(Z_1-Z_2)^2+|1+\rho-Z_2| (X_1^2+Y_1^2)\\
&\leq (|1+\rho-Z_2|-2)( X_1^2+Y_1^2)-2\beta(Z_1-Z_2)^2,   
\end{align*}
where $Z_2$ is the Ornstein-Uhlenbeck process.
By the ergodic theorem, we get
$$
\lim_{t\rightarrow \infty}\frac{1}{t}\int_0^t (|\rho+1-Z_2|-2)\dif s=\EX |\rho+1-O^\beta|-2=
\rho-1+\sqrt{\gamma^2/\pi\beta} <0.
$$
This implies $\lim\limits_{t\rightarrow \infty}|\phi_1(t)-\phi_2(t)|=0$ almost surely, and further implies that $\phi$ is locally asymptotically stable. 
\qed
\end{proof}

\begin{remark}
We give another proof by estimating the Lyapunov exponent.
If $|\gamma|<\gamma_*$, then \eqref{eq:CH21} admits a unique invariant measure $\delta_0\times \delta_0 \times N(0,\frac{\gamma^2}{2\beta})$, and \eqref{eq:CH21-1} with $\lambda=\beta $ admits a unique invariant measure $\delta_0\times \delta_0 \times \delta_0$.   
At $(0,0,0)$, the Jaobian reads 
$$
\begin{pmatrix}
    -\sigma&\sigma&0\\
    \rho+O^\beta&-1&0\\
    0&0&-\beta\\
\end{pmatrix}
$$
Define
$$ D_t=
\begin{pmatrix}
    -\sigma&\sigma\\
    \rho+O_t^\beta&-1\\
\end{pmatrix}.
$$
Let
$$
\ddt(\nabla X, \nabla Y)= D_t(\nabla X, \nabla Y).
$$
We have $\lim\limits_{t\rightarrow \infty}\frac{1}{t}\ln(\frac{\nabla X^2}{\sigma} +\nabla Y^2) \leq \lim\limits_{t\rightarrow \infty} \frac{1}{t}\int_0^t [|\rho+1-O^\beta_s|-2] \dif s $.
The ergodic theorem reads
$$
\lim_{t\rightarrow \infty}\frac{1}{t}\int_0^t (|\rho+1-O^\beta_s|-2) \dif s=\EX |\rho+1-O^\beta|-2=
\rho-1+\sqrt{\gamma^2/\pi\beta}.
$$
Thus, the top Lyapunov exponent is negative if $\gamma^2 < (1-\rho)^2\pi\beta$. Thanks to Proposition~\ref{prop:3.1}, the locally asymptotically stable property holds. 
\end{remark}

\begin{theorem}\label{thm:4.3.1}
If $|\gamma| < (1-\rho)\sqrt{\pi\beta}$,  synchronization occurs.
If $|\gamma|>\gamma^* \geq (1-\rho)\sqrt{\pi\beta}$,  (weak) synchronization does not occur. 
\end{theorem}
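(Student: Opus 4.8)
The plan is to treat the two noise regimes with different tools: the necessary-and-sufficient criterion of Theorem~\ref{thm:2.1} for the small-noise case, and the two-invariant-measures dichotomy of Proposition~\ref{prop:4.3.1} combined with Theorem~\ref{thm:2.3} for the large-noise case. For the small-noise half $|\gamma|<(1-\rho)\sqrt{\pi\beta}$, I first note that $E=\R^3$ is locally compact and $\sigma$-compact, and that Lemma~\ref{lem:4.3.1} produces a global attractor for \eqref{eq:CH21-1}; through the conjugacy $z_t=Z_t-O^\lambda_t$ this yields a weak attractor $A$ for \eqref{eq:CH21}. Since $\beta>0$, the hypothesis $|\gamma|<(1-\rho)\sqrt{\pi\beta}$ forces $\rho<1$, so Lemma~\ref{lem:4.3.2} gives strong recurrence at the origin $z_*=(0,0,0)$, while Lemma~\ref{lem:4.3.3} gives asymptotic stability on a ball $B(z_*,\varepsilon)$. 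Hence hypothesis~(iii) of Theorem~\ref{thm:2.1} holds with $z=z_*$, and the implication (iii)$\Rightarrow$(i) delivers synchronization. The one point requiring care is that the same base point $z_*$ serves both the recurrence and the stability conditions and that the attractor of the transformed system \eqref{eq:CH21-1} is genuinely the weak attractor of the original RDS; both follow from the explicit change of variables.

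For the large-noise half I would argue by contradiction, the crucial reduction being that weak synchronization forces the one-point Markov semigroup $P_t$ to admit a unique invariant measure. Indeed, by Theorem~\ref{thm:2.3} weak synchronization implies that $\phi$ is strongly mixing, hence completely mixing, so $|P_tf(x)-P_tf(y)|\to 0$ for every bounded Lipschitz $f$ and all $x,y\in E$. Then for any two invariant measures $\nu_1,\nu_2$,
\[
\Big|\int f\,\dif\nu_1-\int f\,\dif\nu_2\Big|
=\Big|\iint \big(P_tf(x)-P_tf(y)\big)\,\dif\nu_1(x)\,\dif\nu_2(y)\Big|\longrightarrow 0,
\]
so $\nu_1=\nu_2$. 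But Proposition~\ref{prop:4.3.1} asserts that for $|\gamma|>\gamma^*$ the system \eqref{eq:CH21} admits two distinct ergodic invariant measures, contradicting uniqueness; therefore neither weak synchronization nor, a fortiori, synchronization can hold.

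It remains to justify the ordering $\gamma^*\ge(1-\rho)\sqrt{\pi\beta}$, which also keeps the two regimes consistent: the small-noise analysis shows that for $|\gamma|<(1-\rho)\sqrt{\pi\beta}$ synchronization holds and hence, by the argument above, the invariant measure is unique, so if one had $\gamma^*<(1-\rho)\sqrt{\pi\beta}$ then any $\gamma$ with $\gamma^*<|\gamma|<(1-\rho)\sqrt{\pi\beta}$ would simultaneously carry a unique invariant measure and two distinct ergodic invariant measures, which is impossible. I expect the main obstacle to be precisely this large-noise half, namely the clean passage from weak synchronization to uniqueness of the invariant measure, which leans on the strong-mixing equivalence of Theorem~\ref{thm:2.3} and the completeness of the mixing property for white noise RDS; by contrast the small-noise half is essentially an assembly of Lemmas~\ref{lem:4.3.1}--\ref{lem:4.3.3} into Theorem~\ref{thm:2.1}.
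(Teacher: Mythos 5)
Your proposal is correct and follows essentially the same route as the paper: the small-noise half assembles Lemmas~\ref{lem:4.3.1}--\ref{lem:4.3.3} into criterion (iii) of Theorem~\ref{thm:2.1}, and the large-noise half uses Theorem~\ref{thm:2.3} together with the two ergodic invariant measures from Proposition~\ref{prop:4.3.1} to rule out strong mixing and hence weak synchronization. Your added details (that $|\gamma|<(1-\rho)\sqrt{\pi\beta}$ forces $\rho<1$, the explicit derivation that two invariant measures contradict strong mixing, and the consistency check on $\gamma^*$) merely flesh out steps the paper leaves implicit.
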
 
\begin{proof} By Theorem~\ref{thm:2.1} and Lemmas~\ref{lem:4.3.1}-\ref{lem:4.3.3}, we have  synchronization occurs provided $|\gamma| < (1-\rho)\sqrt{\pi\beta}$.

If $|\gamma|>\gamma^*$, using Theorem~\ref{thm:2.3} and Proposition~\ref{prop:4.3.1}, we know that the RDS is not strongly mixing, and weak synchronization does not hold. 
 \qed
\end{proof}

\begin{remark}
 As a corollary, we know that if $|\gamma| < (1-\rho)\sqrt{\pi\beta}$, the unique invariant measure $\nu_1$ is strongly mixing. 
\end{remark}

%%%%%%%%%%%%%%%%%%%%%%%%%%%%%%%%%%%%%%%%%%%%%%%%%%%%%%n

\section{\bf Proofs }  \label{proof}

%%%%%%%%%%%%%%%%%%%%%%%%%%%%%%%%%%%%%%%%%%%%%%%%%%%%%%
%%%%%%%%%%%%%%%%%%%%%%%%%%%%%%%%%%%%%%%%%%%%%%%%%%%%%%

%%%%%%%%%%%%%%%%%%%%%%%%%%%%%%%%%%%%%%%%%%%%%%%%%%%%%%

\subsection{\bf Proof of Theorem~\ref{thm:2.1} }  \label{proof 2.1}

%%%%%%%%%%%%%%%%%%%%%%%%%%%%%%%%%%%%%%%%%%%%%%%%%%%%%%
We begin with some lemmas.

\begin{lemma}\label{lem:5.1}
Assume that synchronization holds. Then
there exist a point $z\in E$  such that  for every  $\varepsilon>0$, $\PX(A\subset B(z,\varepsilon))>0.$

Assume that synchronization holds and $E$ is locally compact and $\sigma$-compact. Then $\phi$ is asymptotically stable on $U$, where $U\subset E$ is an arbitrary non-empty, bounded set.  
\end{lemma}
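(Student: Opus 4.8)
The two assertions are established separately.

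\medskip

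For the first assertion the argument is purely measure-theoretic. Since synchronization holds, the weak attractor satisfies $A(\omega)=\{a(\omega)\}$ for $\PX$-a.e.\ $\omega$, and a random compact set that is $\PX$-a.s.\ a singleton yields a measurable selection $a\colon\Omega\to E$; let $\mu:=a_\ast\PX$ denote its law, a Borel probability measure on the Polish space $E$. Because $E$ is separable, the topological support $\mathrm{supp}(\mu)$ is non-empty (otherwise $E$ would be covered by countably many $\mu$-null open balls, forcing $\mu(E)=0$). I would pick any $z\in\mathrm{supp}(\mu)$, so that $\mu(B(z,\varepsilon))>0$ for every $\varepsilon>0$. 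Up to a $\PX$-null set one has $\{a(\omega)\in B(z,\varepsilon)\}\subset\{A(\omega)\subset B(z,\varepsilon)\}$, whence
\[
\PX\big(A\subset B(z,\varepsilon)\big)\ \ge\ \mu\big(B(z,\varepsilon)\big)\ >\ 0,
\]
which is exactly the claim.

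\medskip

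For the second assertion I would first invoke the remark following Theorem~\ref{thm:2.1}: replacing $d$ by an equivalent metric $d'$ we may assume $(E,d')$ is Heine--Borel, so $\overline{U}$ is compact and the weak attractor attracts the closed bounded set $\overline{U}$ in probability (\cite{FGS17}), i.e.\ $\mathrm{dist}_{d'}(\phi_t(\theta_{-t}\omega,\overline{U}),A(\omega))\to 0$ in $\PX$-probability as $t\to\infty$. Using $A(\omega)=\{a(\omega)\}$, the Hausdorff semidistance to the attractor controls the diameter, since every point of $\phi_t(\theta_{-t}\omega,\overline{U})$ then lies within $\mathrm{dist}_{d'}(\,\cdot\,)$ of $a(\omega)$, giving
\[
\mathrm{diam}_{d'}\big(\phi_t(\theta_{-t}\omega,\overline{U})\big)\ \le\ 2\,\mathrm{dist}_{d'}\big(\phi_t(\theta_{-t}\omega,\overline{U}),\{a(\omega)\}\big)\ \longrightarrow\ 0
\]
in probability. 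By continuity of $\phi_t(\omega,\cdot)$ the images of $U$ and $\overline{U}$ have the same diameter, and because the image collapses onto the single point $a(\omega)$ its diameter tends to $0$ in every metric inducing the topology of $E$, in particular in the original metric $d$; hence $\mathrm{diam}_d(\phi_t(\theta_{-t}\omega,U))\to 0$ in probability as well.

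\medskip

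It remains to pass from this pullback statement to forward asymptotic stability along a deterministic time sequence. Writing $g_t(\omega):=\mathrm{diam}_d(\phi_t(\omega,U))$, the map $\omega\mapsto\theta_{-t}\omega$ is $\PX$-preserving, so $g_t(\theta_{-t}\cdot)$ and $g_t(\cdot)$ have identical laws; since convergence in probability to the constant $0$ depends only on the one-dimensional marginals $\PX(g_t>\varepsilon)$, the relation $g_t(\theta_{-t}\cdot)\to 0$ in probability forces $g_t(\cdot)\to 0$ in probability as $t\to\infty$. Convergence in probability along the continuous parameter then yields a sequence $t_n\uparrow\infty$ with $g_{t_n}\to 0$ $\PX$-a.s., so that
\[
\PX\Big(\lim_{n\to\infty}\mathrm{diam}\big(\phi_{t_n}(\cdot,U)\big)=0\Big)=1>0,
\]
which is precisely asymptotic stability of $\phi$ on $U$ in the sense of the diameter formula of Definition~\ref{def:AS} applied to the bounded set $U$.

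\medskip

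The routine ingredients are the support argument for the law of $a$ and the extraction of an a.s.-convergent subsequence. The step requiring the most care is the interplay between the two notions of attraction: the weak attractor attracts only in probability and in the pullback sense, so the crux is the $\theta$-invariance argument converting pullback convergence in probability into forward convergence in probability, together with the observation that ``diameter tends to zero'' survives the change of metric from $d'$ to $d$ precisely because the set collapses onto one point rather than merely having small topological oscillation.
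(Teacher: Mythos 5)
Your proposal is correct and follows essentially the same route as the paper: for the first claim you pick $z$ in the support of the law of the random point $a(\omega)$, and for the second you bound $\mathrm{diam}(\phi_t(\cdot,U))$ via the triangle inequality through the single attractor point, using that a weak attractor in a locally compact, $\sigma$-compact space attracts bounded sets. You are in fact somewhat more careful than the paper's own proof, which silently uses the forward formulation $d(\phi_t(\omega,x),a(\theta_t\omega))\to 0$ and omits both the $\theta$-invariance argument converting pullback to forward convergence in probability and the extraction of the a.s.-convergent sequence $t_n$ required by Definition~\ref{def:AS}.
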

\begin{proof}
For any $\varepsilon>0$, since $A(\omega)=\{a(\omega)\}$, there exists (indeed for all) $z\in \rm{supp}~ \mu_{a(\omega)} $ such that
$$
\PX\left(a(\cdot)\subset B(z,\varepsilon )\right)>0.
$$

Since $E$ is locally compact and $\sigma$-compact.  As a result, a weak attractor attracts all closed and bounded subsets. Hence
\begin{align*}
\mathrm{diam}(\varphi_t(\omega, U)) &= \sup_{x,y\in U} d(\varphi_t(\omega, x), \varphi_t(\omega, y)) \\
&\leq \sup_{x,y\in U} \left[d(\varphi_t(\omega, x), a(\theta_t\omega)) + d(a(\theta_t\omega), \varphi_t(\omega, y)) \right] \\
&\to 0 \quad \text{for } t \to \infty
\end{align*}
in probability. Thus $\phi$ is asymptotically stable on $U$. In particular,  $\phi$ is asymptotically stable on $B(z,\varepsilon )$.
\qed
\end{proof}

\begin{lemma}  \label{lem:5.2}
 Let $A$ be the unique weak attractor of an {\em RDS} $\phi$. Assume that
there exist a point $z\in E$  and  $\varepsilon>0$ such that  $\phi$ is asymptotically stable on $B(z,\varepsilon)$ and  $\PX(A\subset B(z,\varepsilon))>0.$  Then synchronization occurs.
\end{lemma}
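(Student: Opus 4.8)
The plan is to prove that $\PX(\mathrm{diam}(A)=0)>0$ and then upgrade this to probability one by ergodicity of the driving metric dynamical system $(\Om,\mathcal{F},\PX,(\theta_t))$. First I would isolate the two positive-probability events supplied by the hypotheses. Set $\Om_0:=\{\om:A(\om)\subset B(z,\varepsilon)\}$, so that $\PX(\Om_0)>0$; since it is defined through the pullback attractor, $\Om_0$ is measurable with respect to the past $\sigma$-algebra $\mathcal{F}_-=\sigma(\om|_{(-\infty,0]})$. By asymptotic stability on $B(z,\varepsilon)$ (Definition~\ref{def:AS}) there is a deterministic sequence $t_n\uparrow\infty$ with $\PX(\Om_1)>0$, where $\Om_1:=\{\om:\lim_n \mathrm{diam}(\phi_{t_n}(\om,B(z,\varepsilon)))=0\}$; this event depends only on the noise increments on $[0,\infty)$ and is therefore measurable with respect to the future $\sigma$-algebra $\mathcal{F}_+$. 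Using that $\phi$ is a white noise RDS, $\mathcal{F}_-$ and $\mathcal{F}_+$ are independent, whence $\PX(\Om_0\cap\Om_1)=\PX(\Om_0)\PX(\Om_1)>0$.

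Next I would transport the contraction from the ball to the attractor via invariance. On $\Om_0\cap\Om_1$ the invariance $A(\theta_{t_n}\om)=\phi_{t_n}(\om,A(\om))\subset\phi_{t_n}(\om,B(z,\varepsilon))$ gives $\mathrm{diam}(A(\theta_{t_n}\om))\le\mathrm{diam}(\phi_{t_n}(\om,B(z,\varepsilon)))\to 0$. Writing $g(\om):=\mathrm{diam}(A(\om))$, we therefore have $g(\theta_{t_n}\om)\to 0$ on a set of positive probability. Because $(\theta_t)$ preserves $\PX$, the variable $g\circ\theta_{t_n}$ has the same law as $g$, so $\PX(g<\delta)=\PX(g\circ\theta_{t_n}<\delta)$ for every $n$ and every $\delta>0$. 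Applying Fatou's lemma to the events $E_n:=\{g\circ\theta_{t_n}<\delta\}$, which eventually contain $\Om_0\cap\Om_1$, yields $\PX(g<\delta)=\liminf_n\PX(E_n)\ge\PX(\Om_0\cap\Om_1)>0$ for every $\delta>0$; letting $\delta\downarrow 0$ and using continuity from above gives $\PX(g=0)\ge\PX(\Om_0\cap\Om_1)>0$.

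Finally I would run a zero-one argument. The event $S:=\{g=0\}=\{A\text{ is a singleton}\}$ is $(\theta_t)$-invariant modulo null sets: if $A(\om)=\{a(\om)\}$ then $A(\theta_t\om)=\phi_t(\om,\{a(\om)\})=\{\phi_t(\om,a(\om))\}$ is again a singleton, so $S\subset\theta_t^{-1}S$, and measure preservation $\PX(\theta_t^{-1}S)=\PX(S)$ forces $S=\theta_t^{-1}S$ up to $\PX$-null sets. Ergodicity of $(\theta_t)$ then promotes $\PX(S)>0$ to $\PX(S)=1$, which is precisely the assertion that synchronization occurs, i.e. $A(\om)=\{a(\om)\}$ almost surely.

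The step I expect to be the main obstacle is the joint positivity $\PX(\Om_0\cap\Om_1)>0$: the attractor condition and the asymptotic-stability condition are a priori tied to different portions of the randomness, and it is their simultaneous occurrence that makes the contraction act on the attractor rather than merely on the ball. This is exactly where the white-noise structure enters through the independence of $\mathcal{F}_-$ and $\mathcal{F}_+$; without such independence the two positive-probability events could in principle be disjoint. A secondary point needing care is the measurability claim that $A$ is $\mathcal{F}_-$-measurable while $\om\mapsto\phi_{t_n}(\om,\cdot)$ is $\mathcal{F}_+$-measurable, which I would take from the standard construction of the weak (pullback) attractor.
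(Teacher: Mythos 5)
Your proof is correct and is essentially the argument of Flandoli--Gess--Scheutzow, Lemma~2.5, which is exactly what the paper cites in place of a proof: past/future independence to intersect the two positive-probability events, $\phi$-invariance of $A$ to contract its diameter, measure preservation plus Fatou to get $\PX(\mathrm{diam}(A)=0)>0$, and ergodicity of $\theta$ to upgrade to probability one. The only point worth making explicit is that the lemma implicitly assumes $\phi$ is a white noise RDS (so that $A$ is measurable with respect to the past $\mathcal{F}_0$ and $\mathcal{F}_0$ is independent of $\mathcal{F}_{0,\infty}$), which is precisely where your independence step lives.
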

\begin{proof}
For a proof, we refer to \cite[Lemma 2.5]{FGS17}. \qed
\end{proof}

\begin{lemma}\label{lem:5.3}
Assume that synchronization holds and $E$ is locally compact and $\sigma$-compact. Then there exists a point $z\in E$ such that $\phi$ is strongly swift transitive at $z$. 
\end{lemma}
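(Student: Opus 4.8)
The plan is to combine the two facts furnished by Lemma~\ref{lem:5.1}: the existence of a distinguished point $z$ carrying positive attractor mass at every scale, and the asymptotic stability of $\phi$ on every bounded set. First I would fix the point $z\in E$ produced by Lemma~\ref{lem:5.1}, so that $\PX(A\subset B(z,\varepsilon))>0$ for every $\varepsilon>0$; this single $z$ will serve for all $R$ and $x$. Now fix an arbitrary $R>0$ and $x\in E$. Since $E$ is locally compact and $\sigma$-compact, we may pass to an equivalent Heine-Borel metric, so the weak attractor attracts the closed bounded set $\overline{B(x,R)}$. Writing $A(\omega)=\{a(\omega)\}$, this attraction, rephrased in forward time via the $\theta_t$-invariance of $\PX$, reads
$$
\beta(t,\omega):=\sup_{w\in B(x,R)} d\big(\phi_t(\omega,w),a(\theta_t\omega)\big)\longrightarrow 0 \quad\text{in probability as } t\to\infty,
$$
which is precisely the estimate already extracted in the proof of Lemma~\ref{lem:5.1}.

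Next I would run a triangle-inequality argument on a good event. For every $w\in B(x,R)$,
$$
d\big(\phi_t(\omega,w),z\big)\le d\big(\phi_t(\omega,w),a(\theta_t\omega)\big)+d\big(a(\theta_t\omega),z\big)\le \beta(t,\omega)+d\big(a(\theta_t\omega),z\big).
$$
Hence, on the event $\{\beta(t,\cdot)<R/4\}\cap\{a(\theta_t\cdot)\in B(z,R/4)\}$ one has $\phi_t(\omega,B(x,R))\subset B(z,R/2)$, which is exactly the inclusion required for strong swift transitivity at $z$.

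It then remains to show that this event has positive probability for a suitable $t_0$. The first factor satisfies $\PX(\beta(t,\cdot)<R/4)\to 1$ as $t\to\infty$ by the convergence in probability above. The second factor has a fixed positive probability independent of $t$: since $\theta_t$ preserves $\PX$ and $A(\theta_t\omega)=\{a(\theta_t\omega)\}$, we get $\PX(a(\theta_t\cdot)\in B(z,R/4))=\PX(A\subset B(z,R/4))=:p>0$ by the choice of $z$ with $\varepsilon=R/4$. Choosing $t_0$ so large that $\PX(\beta(t_0,\cdot)\ge R/4)<p$ yields, by the elementary bound $\PX(E_1\cap E_2)\ge \PX(E_2)-\PX(E_1^c)$,
$$
\PX\big(\phi_{t_0}(\cdot,B(x,R))\subset B(z,R/2)\big)\ge p-\PX(\beta(t_0,\cdot)\ge R/4)>0.
$$
Since $R>0$ and $x\in E$ were arbitrary, $\phi$ is strongly swift transitive at $z$.

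The main obstacle is the one subtle point, namely the passage to the \emph{forward} attraction statement for $\beta(t,\omega)$, since random attractors attract in the pullback sense; the step that makes this legitimate is the $\theta_t$-invariance of $\PX$, which converts pullback convergence in probability into forward convergence in probability while simultaneously keeping $\PX(a(\theta_t\cdot)\in B(z,R/4))$ constant in $t$. Everything else is the routine combination of an event whose probability tends to one with an event of fixed positive probability.
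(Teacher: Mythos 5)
Your proof is correct and follows essentially the same route as the paper's: fix $z$ in the support of the law of $a(\cdot)$ so that $\PX(A\subset B(z,R/4))>0$ for every $R$, use the (forward-in-time, via $\theta_t$-invariance) attraction in probability of the bounded set $B(x,R)$ to the singleton attractor, and intersect the two events with the elementary bound $\PX(E_1\cap E_2)\ge\PX(E_1)+\PX(E_2)-1$. Your explicit fixing of $z$ once and for all via Lemma~\ref{lem:5.1} is in fact slightly cleaner than the paper's phrasing, but the argument is the same.
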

\begin{proof}
For any $R>0$, since $A(\omega)=\{a(\omega)\}$, there exists $z\in E$ such that
$$
\PX\left(a(\cdot)\subset B\bigg(z,\frac{R}{4}\bigg)\right)=2\delta_R>0.
$$
By the definition of weak attractor, for every $x\in E$, there exists a time $t>0$ such that
$$
\PX\left(\phi_{t}(\cdot, B(x,R)) \subset  B\left(a(\theta_t \cdot ),\frac{R}{4}\right)\right)>1-\delta_R.
$$
As consequence 
\begin{align*}
\PX \left( \phi_{t}(\cdot,B(x,R)) \subset B \bigg(z, \frac{R}{2}\bigg) \right)
&\geq \PX \left( a(\theta_t\cdot)\subset B\left(z,\frac{R}{4}\right),\phi_{t}(\cdot,B(x,R)) \subset B \bigg(z, \frac{R}{2}\bigg) \right)\\
&\geq \PX \left( a(\theta_t\cdot)\subset B\left(z,\frac{R}{4}\right),\phi_{t}(\cdot,B(x,R)) \subset B \bigg(a(\theta_t\cdot), \frac{R}{4}\bigg) \right)\\
&\geq \PX \left( a(\theta_t\cdot)\subset B\bigg(z,\frac{R}{4}\bigg)\right)+\PX\left(\phi_{t}(\cdot,B(x,R)) \subset B \bigg(a(\theta_t\cdot), \frac{R}{4}\bigg) \right)-1\\
&> \delta_R. 
\end{align*}
In particular, 
$$
\PX \left( \phi_t(\cdot,B(x,R)) \subset B \bigg(z, \frac{R}{2}\bigg) \right)>0.
$$
\qed\end{proof}

\begin{lemma} \label{lem:5.4}%{\rm(}A   sufficient condition for $\PX(A\subset B(z,\varepsilon))>0$ {\rm)}
Let $A$ be the unique weak attractor of an {\em RDS} $\phi$.  If for a point $z\in E$ and every $R>0$, there is a time $t_0 >0$ such that  
$$
\PX \left( \phi_{t_0}(\cdot,B(z,R)) \subset B \bigg(z, \frac{R}{2}\bigg) \right)>0,
$$
then for every $\varepsilon >0$,
$\PX(A\subset B(z,\varepsilon))>0$.
\end{lemma}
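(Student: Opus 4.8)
The plan is to iterate the strong recurrence hypothesis \emph{directly at the level of the attractor}, exploiting the invariance $\phi_{t}(\omega, A(\omega)) = A(\theta_{t}\omega)$ together with the independence of the past-measurable random set $A(\omega)$ and the future-measurable map $\phi_{t_0}(\omega,\cdot)$. Introduce $p(R) := \PX(A \subset B(z,R))$, which is nondecreasing in $R$; the goal reduces to showing $p(\varepsilon) > 0$ for every $\varepsilon > 0$. Note that synchronization is \emph{not} assumed here (in contrast to Lemmas~\ref{lem:5.1} and~\ref{lem:5.3}), so the only structural input is the recurrence property at $z$.

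First I would record that $A(\omega)$ is almost surely a compact, hence bounded, random set, so the events $\{A \subset B(z,R)\}$ increase to a full-measure set as $R \uparrow \infty$; by continuity of measure there is some $R_0 > 0$ with $p(R_0) > 0$ (indeed $p(R_0)$ arbitrarily close to $1$). The heart of the argument is then a one-step contraction estimate: for every $R > 0$, $p(R/2) \geq p(R)\,\PX(G_R)$, where, by the recurrence hypothesis applied with radius $R$, the event $G_R := \{\phi_{t_0}(\cdot, B(z,R)) \subset B(z,R/2)\}$ satisfies $\PX(G_R) > 0$ for a suitable $t_0 = t_0(R)$. To obtain it, observe that $H_R := \{A \subset B(z,R)\}$ is measurable with respect to the past $\mathcal{F}_{-\infty,0}$ (the pullback attractor depends only on the noise before time $0$), while $G_R$ is measurable with respect to $\mathcal{F}_{0,t_0}$; by the independent-increment (white-noise) structure these two $\sigma$-algebras are independent, whence $\PX(H_R \cap G_R) = p(R)\,\PX(G_R)$. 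On $H_R \cap G_R$ the invariance of $A$ gives $A(\theta_{t_0}\omega) = \phi_{t_0}(\omega, A(\omega)) \subset \phi_{t_0}(\omega, B(z,R)) \subset B(z,R/2)$, so that $H_R \cap G_R \subset \theta_{t_0}^{-1}\{A \subset B(z,R/2)\}$; since $\theta_{t_0}$ preserves $\PX$, this yields $p(R/2) \geq \PX(H_R \cap G_R) = p(R)\,\PX(G_R) > 0$.

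Finally, iterating this bound starting from $R_0$ gives $p(R_0/2^k) > 0$ for every $k \in \N$. Given $\varepsilon > 0$, I would choose $k$ with $R_0/2^k \leq \varepsilon$ and invoke the monotonicity of $p$ to conclude $p(\varepsilon) \geq p(R_0/2^k) > 0$, which is the assertion. (Crucially, the hypothesis supplies the recurrence inequality for \emph{every} radius $R > 0$, so each radius $R_0/2^k$ in the iteration is covered.)

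The main obstacle, and the step that genuinely requires the white-noise setting, is the justification of the independence of $H_R$ and $G_R$: one must confirm both that the weak attractor $A(\omega)$ is measurable with respect to the past $\mathcal{F}_{-\infty,0}$ and that the increments generating $\phi_{t_0}(\cdot,\cdot)$ on $[0,t_0]$ are independent of that past. A secondary point to verify carefully is the strict invariance $\phi_{t_0}(\omega, A(\omega)) = A(\theta_{t_0}\omega)$ and the a.s.\ boundedness of $A$ used in the first step; both are standard for random attractors but should be stated explicitly so that the inclusion $A(\theta_{t_0}\omega)\subset \phi_{t_0}(\omega,B(z,R))$ on $H_R\cap G_R$ is fully warranted.
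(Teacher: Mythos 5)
Your proposal is correct and follows essentially the same route as the paper's proof: pick $R_0$ with $\PX(A\subset B(z,R_0))>0$ by compactness of $A$, use the $\mathcal{F}_0$-measurability of $A$, the independence of $\mathcal{F}_0$ and $\mathcal{F}_{0,t_0}$, the invariance $\phi_{t_0}(\omega,A(\omega))=A(\theta_{t_0}\omega)$ and the $\theta$-invariance of $\PX$ to get the one-step contraction $\PX(A\subset B(z,R/2))\geq \PX(A\subset B(z,R))\,\PX(G_R)>0$, and iterate. Your write-up is in fact slightly more explicit than the paper's about the two points that need checking (past-measurability of $A$ and strict invariance).
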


\begin{proof} 
Since $A$ is a random compact set, for fixed $z\in E$, we can choose $r_0$ such that 
$$
\PX(A\subset B(z,r_0))>0.
$$
Since $\phi$ is  strongly recurrent at $z$,  there exists $t_0$ such that 
\begin{align*}
\PX(A\subset B(z,\frac{r_0}{2}))
&=\PX(A(\theta_{t_0}\cdot)\subset B(z,\frac{r_0}{2}))\\
&\geq \PX\left(A\subset B(z,r_0), \phi_{t_0}(\cdot, B(z,r_0)) \subset  B(z,\frac{r_0}{2})\right)\\ 
&= \PX(A\subset B(z, r_0))\PX\left(\phi_{t_0}(\cdot, B(z,r_0)) \subset  B(z,\frac{r_0}{2})\right)\\
&>0.
\end{align*}
Here, we used the independence of $\mathcal{F}_0$ and $\mathcal{F}_{0,t_0}$. 
Given any $\varepsilon >0$, we may apply strongly recurrent at $z$ iteratively until we get
$$
\PX(A\subset B(z,\varepsilon))>0.
$$
The proof is complete. \qed
\end{proof}

\bigskip
With all the lemmas as above, we are in a position to give \bigskip\\
\textbf{Proof of Theorem~\ref{thm:2.1}}. 
Our strategy will be verifying  (i) $\Rightarrow$ (ii)$\Rightarrow$ (iii)  $\Rightarrow$ (iv)  $\Rightarrow$ (i)  and via applications the lemmas in this subsection.

By Lemma~\ref{lem:5.1} and Lemma~\ref{lem:5.3}, we get (i) $\Rightarrow$ (ii).
To see (ii) $\Rightarrow$ (iii), we use the fact that strongly swift transitive implies strongly recurrent, see Definition~\ref{def:loc-ST}.
Thanks to Lemma~\ref{lem:5.4}, we have (iii) $\Rightarrow$ (iv).
Due to Lemma~\ref{lem:5.2}, we obtain (iv) $\Rightarrow$ (i). 
The proof is complete. \qed

\begin{remark}
In Lemmas~\ref{lem:5.2} and~\ref{lem:5.4}, we need not assume that $E$ is locally compact and $\sigma$-compact. So Corollary~\ref{cor:2.1} is also established.
\end{remark}

%%%%%%%%%%%%%%%%%%%%%%%%%%%%%%%%%%%%%%%%%%%%%%%%%%%%%%

\subsection{\bf Proof of Theorem~\ref{thm:2.2} }  \label{proof 2.2}

%%%%%%%%%%%%%%%%%%%%%%%%%%%%%%%%%%%%%%%%%%%%%%%%%%%%%%

The proof follows the idea in~\cite{FGS17}.  For the reader’s convenience, we give a concise
proof here.
 Since $\phi$ is strongly mixing and weakly asymptotically stable, by~\cite[Proposition 2.20]{FGS17}, there are $\mathcal{F}_0$-measurable random variables $a_i(\omega)$, $i = 1, \ldots, N$ such that
$$
A(\omega) := \{a_i(\omega): i = 1, \ldots, N\}
$$
is a minimal weak point attractor.  
Assume $A(\omega)$ is not a singleton $\PX$-a.s. Then
\[
F(\omega) := \min_{i, j = 1, \ldots, N, \, i \neq j } d(a_i(\omega), a_j(\omega)) > 0,\quad \PX-\mathrm{a.s.}  
\]
%with $2\epsilon<1$. 
Since $\PX$ is invariant under $\theta_t$, we have  for any $t>0$,
\begin{align}\label{eq:5.3}
 0<\EX (1\wedge F(\omega))=\EX (1\wedge F(\theta_t\omega))
\leq \EX (1\wedge d(\phi_t(\omega, a_1(\omega)), \phi_t(\omega, a_2(\omega))).
\end{align}

By weak asymptotic stability on $B(z,\varepsilon)$,  there is a sequence $t_n \to \infty$  such that for all $x, y \in B(z,\varepsilon)$ and all $\eta > 0$
\begin{equation} \label{eq:5.4}
    \liminf_{n \to \infty} \mathbb{P} \left( d \left( \varphi_{t_n}(\cdot, x), \varphi_{t_n}(\cdot, y) \right) \leq \eta \right) \geq \delta=\PX(\mathcal{M}) > 0. 
\end{equation}
Let $x, y \in E$.  Define the stopping time
\[
    \tau_\varepsilon^{x,y}(\omega) := \inf \left\{ t \geq 0 : d \left( \varphi_t(\omega, x), \varphi_t(\omega, y) \right) \leq \frac{\varepsilon}{4} \right\}.
\]
Let  $a(\omega), b(\omega) \in A(\omega)$ be two $\mathcal{F}_0$-measurable selections and let $\tau_\varepsilon(\omega) := \tau_\varepsilon^{a(\omega),b(\omega)}$, where $\tau_\varepsilon^{x,y}$ is defined as above. By~\eqref{eq:2.3}, $\tau_\varepsilon$ is finite $\mathbb{P}$-almost surely. Due to independence of $\mathcal{F}_0$ and $\mathcal{F}_{0,\infty}$, right-continuity of the trajectories implies that there is a $\iota : \Omega \to \mathbb{R}_+ \setminus \{0\}$ such that
\begin{equation*}
    d \left( \varphi_{\tau_\varepsilon(\omega) + t}(\omega, a(\omega)), \varphi_{\tau_\varepsilon(\omega) + t}(\omega, b(\omega)) \right) \leq \frac{\varepsilon}{3} 
\end{equation*}
for all $t \in [0, \iota(\omega)]$, $\mathbb{P}$-a.s. Hence, there is a $t_0 \geq 0$ such that
\[
    \mathbb{P} \left( d \left( \varphi_{t_0}(\cdot, a(\cdot)), \varphi_{t_0}(\cdot, b(\cdot)) \right) \leq \frac{\varepsilon}{3} \right) > 0.
\]
By local pointwise strong swift transitivity and using that $\varphi$ is a white noise RDS, there is a time $t_1 \geq 0$ such that
\[
    \mathbb{P} \left( \varphi_{t_0 +t_1}(\cdot, \{a(\cdot), b(\cdot)\}) \subseteq B(z,\varepsilon) \right) > 0.
\]
Using $\varphi$ is a white noise RDS again and \eqref{eq:5.4}, we have for any $\eta>0$
\begin{equation}\label{eq:5.5}
    \liminf_{n \to \infty} \mathbb{P} \left( d \left( \varphi_{t_0 + t_1 + t_n}(\cdot, a(\cdot)), \varphi_{t_0 + t_1 + t_n}(\cdot, b(\cdot)) \right) \leq \eta \right)  > 0.  
\end{equation}
Which is in contradiction to~\eqref{eq:5.3}.\qed

%%%%%%%%%%%%%%%%%%%%%%%%%%%%%%%%%%%%%%%%%%%%%%%%%%%%%%

\subsection{\bf Proof of Theorem~\ref{thm:2.3} }  \label{proof 2.3}

%%%%%%%%%%%%%%%%%%%%%%%%%%%%%%%%%%%%%%%%%%%%%%%%%%%%%%

The proof of (i) $\Rightarrow$ (ii) is straightforward.  We only need to prove (ii) $\Rightarrow$ (i).
Since $\phi$ is strongly mixing and weakly asymptotically stable, by~\cite[Proposition 2.20]{FGS17}, there are $\mathcal{F}_0$-measurable random variables $a_i(\omega)$, $i = 1, \ldots, N$ such that
$$
A(\omega) := \{a_i(\omega): i = 1, \ldots, N\}
$$
is a minimal weak point attractor.  

Assume $A(\omega)$ is not a singleton $\PX$-a.s. Then
\[
F(\omega) := \min_{i, j = 1, \ldots, N, \, i \neq j } d(a_i(\omega), a_j(\omega)) > 0,\quad \PX-\mathrm{a.s.}  
\]
%with $2\epsilon<1$. 
Since $\PX$ is invariant under $\theta_t$, we have 
\begin{align}\label{eq:5.2}
 0<\EX (1\wedge F(\omega))=\EX (1\wedge F(\theta_t\omega))
\leq \EX (1\wedge d(\phi_t(\omega, a_1(\omega)), \phi_t(\omega, a_2(\omega))).
\end{align}
Due to the freezing lemma (\cite[Lemma 4.1]{Baldi}) and the fact that $\mathcal{F}_0$ is independent of $\mathcal{F}_{0,\infty}$, we have
%We can choose $t$ large enough such that 
\begin{align*}
 \EX (1\wedge d(\phi_t(\omega, a_1(\omega)), \phi_t(\omega, a_2(\omega)))
 &=\EX [\EX (1\wedge d(\phi_t(\omega, a_1(\omega)), \phi_t(\omega, a_2(\omega)))|\mathcal{F}_0]\\
 &=\EX [\EX (1\wedge d(\phi_t(\omega, x), \phi_t(\omega, y))|_{x=a_1(\omega),y=a_2(\omega)}].
\end{align*}
Noticing
$$
\lim_{t \to \infty}  d(\phi_t(\omega, x), \phi_t(\omega, y)) = 0 \quad \text{ in probability}, 
$$
and using the dominated convergence theorem, we get
$$
\lim_{t \to \infty} \EX (1\wedge d(\phi_t(\omega, a_1(\omega)), \phi_t(\omega, a_2(\omega)))=0,
$$
which is contradict to \eqref{eq:5.2}. \qed

%%%%%%%%%%%%%%%%%%%%%%%%%%%%%%%%%%%%%%%%%%%%%%%%%%%%%%%
\section*{Appendix} \appendix

\renewcommand{\theequation}{\thesection.\arabic{equation}}

%%%%%%%%%%%%%%%%%%%%%%%%%%%%%%%%%%%%%%%%%%%%%%%%%%%%%%%
\section{Random dynamical systems} \label{RDS}
%%%%%%%%%%%%%%%%%%%%%%%%%%%%%%%%%%%%%%%%%%%%%%%%%%%%%%%

Let $(E, d)$ be a complete separable metric space with Borel $\sigma$-algebra $\mathcal{E}$ and $(\Omega, \mathcal{F}, \PX, \theta)$ be an ergodic metric dynamical system, i.e., $(\Omega, \mathcal{F}, \PX)$ is a %(not necessarily complete) 
probability space and $\theta := (\theta_t)_{t \in \mathbb{R}}$ is a group of jointly measurable maps on $(\Omega, \mathcal{F}, \PX)$ with ergodic invariant measure $\PX$.

Let $\phi : \mathbb{R} \times \Omega \times E \rightarrow E$ be a perfect cocycle: i.e., $\phi$ is measurable, $\phi_0(\omega, x) = x$ and $\phi_{t+s}(\omega, x) = \phi_{t}(\theta_{s}\omega, \phi_s(\omega,x))$ for all $x \in E$, $t, s \geq 0$, $\omega\in \Omega$. We will assume that $\phi_s(\omega, \cdot)$ is continuous for all $s \geq 0$ and $\omega \in \Omega$. The collection $(\Omega, \mathcal{F}, \PX, \theta, \phi)$ is then called a random dynamical system (RDS), see~\cite{Arn98} for a comprehensive treatment.
 
Consider an RDS generated by an SDE driven by white noise, we assume that $\theta_t\omega(\cdot)=\omega(\cdot+t)-\omega(t)$.  We have a family $\mathcal{F} = (\mathcal{F}_{s,t})_{-\infty < s \leq t < \infty}$ of sub-$\sigma$ algebras of $\mathcal{F}$ such that $\mathcal{F}_{t,u} \subset \mathcal{F}_{s,v}$ whenever $s \leq t \leq u \leq v$, $\theta_r(\mathcal{F}_{s,t}) = \mathcal{F}_{s+r,t+r}$ for all $r, s, t$, and $\mathcal{F}_{s,t}$ and $\mathcal{F}_{u,v}$ are independent whenever $s \leq t \leq u \leq v$. For each $t \in \mathbb{R}$, let us denote the smallest $\sigma$-algebra containing all $\mathcal{F}_{s,t}$, $s \leq t$, by $\mathcal{F}_t$ and the smallest $\sigma$-algebra containing all $\mathcal{F}_{t,u}$, $t \leq u$, by $\mathcal{F}_{t,\infty}$. Note that for each $t \in \mathbb{R}$, the $\sigma$-algebras $\mathcal{F}_t$ and $\mathcal{F}_{t,\infty}$ are independent. We will further assume that $\phi_s(\cdot, x)$ is $\mathcal{F}_{0,s}$-measurable for each $s \geq 0$. The collection $(\Omega, \mathcal{F}, \mathcal{F}_t, P, \theta, \phi)$ is then called a white noise (filtered) random dynamical system.

An invariant measure for an RDS $\phi$ is a probability measure on $\Omega \times E$ with marginal $\PX$ on $\Omega$ that is invariant under $\phi$ for $t\geq 0$. For each probability measure $\mu$ on $\Omega \times E$ with marginal $\PX$ on $\Omega$, there is a unique disintegration $\mu_{\omega}$ such that $\mu$ is an invariant measure for $\phi$ if and only if $\phi_{t}(\omega)\mu_{\omega} = \mu_{\theta_{t}\omega}$ for all $t \geq 0$, almost all $\omega \in \Omega$. An invariant measure $\mu$ is said to be a Markov measure if $\omega \mapsto \mu_{\omega}$ is measurable concerning the past $\mathcal{F}_{0}$. In case of a white noise RDS $\phi$, we may define the associated Markovian semigroup by 
$$
P_{t}f(x) := \mathbb{E}f(\phi_{t}(\cdot, x)).
$$
Where $f$ is measurable and bounded.

We say that a Markovian semigroup $P$ with invariant measure $\rho$ is strongly mixing if, for each continuous, bounded function $f$ and all $x \in E$, we have
\[
P_tf(x) \rightarrow \int_E f(y) \, \dif \rho(y), \quad \text{as} \quad t \to \infty.
\]
Similarly, we say that an RDS $\varphi$ is strongly mixing if the law of $\varphi_t(\cdot, x)$ converges to $\rho$ as $t \to \infty$ for all $x \in E$.

Next, we recall the definition of a pullback attractor and a weak random attractor~\cite{Sc, Crauel97, Crauel01}.
 
\begin{definition}
Let $(\Omega, \mathcal{F}, P, \theta, \phi)$ be an RDS. A random, compact set $A$ is called a pullback attractor if:\\
 {\rm$(1)$}  $A$ is $\phi$-invariant, and\\
   % \item
 {\rm$(2)$} For every compact set $B \in E$, we have
    \[
    \lim_{ t \to \infty} \sup_{x\in B} d(\phi_t(\theta_{-t}\omega, x), A(\omega)) = 0, \quad \text{almost surely}.
    \]
\end{definition}

The map $A$ is called a weak attractor if it satisfies the above properties with almost sure convergence replaced by convergence in probability in $(2)$. It is called a (weak) point attractor if it satisfies the properties above with compact sets $B$ replaced by single points in $(2)$. A (weak) point attractor is said to be minimal if it is contained in each (weak) point attractor.
 
Every pullback attractor is a weak attractor. Weak attractors are unique, that is, if an RDS has two weak attractors, they agree almost surely. 

Consider an RDS generated by an SDE driven by white non-Gaussian noise,  we need to recall the Skorokhod space. Let
$D([0,T],\mathbb{R}^d)$ be the space of c\`{a}dl\`{a}g $\R^d$-valued
functions on $[0, T ]$. 
Let $D_{J_1}([0,T],\mathbb{R}^d)$ be  $D([0,T],\mathbb{R}^d)$ equipped
with the $J_1$-metric, see~\cite{Billingsley}. Let
$D_{U}([0,T],\mathbb{R}^d)$ denote $D([0,T],\mathbb{R}^d)$ equipped
with the uniform metric~\cite{Pollard2012}.
 
%\cite[Theorem 13.4]{Billingsley}
\bigskip
%%%%%%%%%%%%%%%%%%%%%%%%%%%%%%%%%%%%%%%%%%%%%%%%%%%%%%%%%%%%%%%%%
{\bf Acknowledgement.}  
The authors acknowledge the support provided by NNSFs of China (No. 11971186).
%%%%%%%%%%%%%%%%%%%%%%%%%%%%%%%%%%%%%%%%%%%%%%%%%%%%%%%%%%%%%%%%%

%%%%%%%%%%%%%%%%%%%%%%%%%%%%%%%%%%%%%%%%%%%%%%%%%%%%%%%%%%%%%%%%%

\end{document}